\theoremstyle{plain}
\newtheorem{thm}{Theorem}[section]
\newtheorem{cor}[thm]{Corollary}
\newtheorem{prop}[thm]{Proposition}
\newtheorem{lemma}[thm]{Lemma}
\newtheorem{lemma_appendix}[thm]{Lemma}
\theoremstyle{definition}
\newtheorem{definition}[thm]{Definition}
\theoremstyle{remark}
\newtheorem{rem}[thm]{Remark}
\def\paragraph#1{\noindent \textbf{#1}}
\numberwithin{equation}{section}
\DeclarePairedDelimiter\ceil{\lceil}{\rceil}
\begin{document}
	\title[Extremes of the 2d scale-inhomogeneous discrete Gaussian free field]{Extremes of the 2d scale-inhomogeneous discrete Gaussian free field: Sub-leading order and exponential tails}
	\author{Maximilian Fels}
	\address{M. Fels\\Institut f\"ur Angewandte Mathematik\\
		Rheinische Friedrich-Wilhelms-Universität\\ Endenicher Allee 60\\ 53115 Bonn, Germany }
	\email{fels@iam.uni-bonn.de, wt.iam.uni-bonn.de/maximilian-fels}
	\thanks{M.F. is funded by the Deutsche Forschungsgemeinschaft (DFG, German Research Foundation) - project-id 211504053 - SFB 1060 and Germany’s Excellence Strategy – GZ 2047/1, project-id 390685813 – “Hausdorff Center for Mathematics” at Bonn University.\\
	Keywords: extreme value theory, Gaussian free field, inhomogeneous environment, branching Brownian motion, branching random walk}
	\begin{abstract}
		This is the first of a three paper series in which we present a comprehensive study of the extreme value theory of the scale-inhomogeneous discrete Gaussian free field. This model was introduced by Arguin and Ouimet in \cite{MR3541850} in which they computed the first order of the maximum. In this first paper we establish tail estimates for the maximum value, which allow to deduce the log-correction to the order of the maximum and tightness of the centred maximum. Our proofs are based on the second moment method and Gaussian comparison techniques.
	\end{abstract}

	\maketitle
\section{Introduction}
In recent years, so-called  log-correlated (Gaussian) processes have received considerable attention, see e.g. \cite{MR3594368,MR3911893,2016arXiv160600510B,2014arXiv1412.5975B,2015arXiv150304588D,fyodorov1,MR3361256}. One of the reasons for this is that their correlation structure becomes relevant for the properties of the extremes of the processes. Some prominent examples that fall into this class are branching Brownian motion (BBM), the two-dimensional discrete Gaussian free field ( $2$d DGFF), local maxima of the randomised Riemann zeta function on the critical line and cover times of Brownian motion on the torus. 
The $2$d DGFF is one of the well understood non-hierarchical log-correlated fields (see \cite{MR3509015,2016arXiv160600510B,MR1880237,MR3433630}). For simplicity, consider the $2$d DGFF on a square lattice box of side length $N$. It turns out that the maximum can be written as a first order term which is proportional to the logarithm of the volume of the box, a second order correction which is proportional to the logarithm of the first order and stochastically bounded fluctuations. If one considers an uncorrelated Gaussian field on the same box with identical variances, a simple computation shows that the first order of the maximum coincides with the one of the DGFF, whereas the constant in front of the second order correction differs. In \cite{MR3541850}, Arguin and Ouimet introduced the scale-inhomogeneous $2$d DGFF, the analogue  model of variable speed BBM \cite{MR3361256}, which allows to consider different variance profiles. They determined the first order of the maximum. In this paper we continue the study of the maximum, find tail estimates on the maximum value which allow us to deduce the second order correction and tightness of the centred maximum. In the other two papers in this series, we prove, in the regime of weak correlations, convergence of the centred maximum \cite{paper2} and convergence of the extremal process \cite{paper3}. Both are joint work with Hartung.
\subsection{The $2$d discrete Gaussian free field}
Let $V_N \coloneqq ([0,N) \cap \mathbbm{Z} )^2$. The interior of $V_N$ is defined as $V_N^o \coloneqq ([1,N-1] \cap \mathbbm{Z})^2$ and the boundary of $V_N$ is denoted by $ \partial{V}_N \coloneqq V_N \setminus V_N^o$. Moreover, for points $u,v \in V_N$ we write $u \sim v$, if and only if $\|u-v \|_2 =1$, where $\| . \|_2$ is the Euclidean norm.
Let $\mathbb{P}_u$ be the law of a SRW $\{W_k\}_{k \in \mathbbm{N}}$ starting at $u \in {\mathbbm{Z}}^2$. The normalised Green kernel is given by
\begin{equation}
G_{V_N}(u,v) \coloneqq \frac{\pi}{2} \mathbbm{E}_u \left[\sum_{i=0}^{\tau_{\partial{V}_N-1}}\mathbbm{1}_{\{W_i =v\}}\right], \text{ for } u,v \in V_N.
\end{equation}
Here, $\tau_{\partial{V}_N}$ is the first hitting time of the boundary $\partial{V}_N$ by $\{W_k\}_{k \in \mathbbm{N}}$.
For $\delta >0$, we set $V^{\delta}_N\coloneqq (\delta N, (1-\delta)N)^2\cap \mathbb{Z}^2$. By \cite[Lemma 2.1]{MR2243875}, we have for $\delta \in (0,1)$ and $u,v\in V^\delta_N$,
\begin{align}\label{equation:correlation_dgff}
	G_{V_N}(u,v)= \log N - \log\left( \|u-v\|_2\vee 1\right) + O(1).
\end{align}
\begin{definition}\label{definition:DGFF}
	The $2$d discrete Gaussian free field (DGFF) on $V_N$, $\phi^N \coloneqq \{\phi_v^N \}_{v \in V_N}$, is a centred Gaussian field with covariance matrix $G_{V_N}$ and entries  $G_{V_N}(x,y)=\mathbbm{E}[\phi^N_x \phi^N_y],$ for $x,y \in V_N.$
\end{definition}
From \autoref{definition:DGFF} it follows that $\phi_v^N = 0$ for $v \in \partial{V}_N,$ i.e. we have Dirichlet boundary conditions.
\subsection{The $2$d scale-inhomogeneous discrete Gaussian free field}\label{sec:scale_GFF}
\begin{definition}{($2$d scale-inhomogeneous discrete Gaussian free field).}\label{definition:scale_inh_DGFF} \\
Let $\phi^N= \{\phi^N_v \}_{v\in V_N}$ be a $2$d DGFF on $V_N$.
For $v=(v_1,v_2)\in V_N$, let $[v]^N_\lambda$ be the box of side length $N^{1-\lambda}$ centred at $v$, namely
\begin{align}
	[v]_{\lambda}\equiv[v]^N_{\lambda} \coloneqq& \left( \left[v_1-\frac{1}{2}N^{1-\lambda},v_1+\frac{1}{2}N^{1-\lambda}\right.\right] \times \left[ \left.v_2-\frac{1}{2}N^{1-\lambda},v_2+\frac{1}{2}N^{1-\lambda}\right]\right) \cap V_N
\end{align}
and set $[v]^N_{0} \coloneqq V_N$ and $[v]^N_{1} \coloneqq \{v\}$. We denote by $[v]^o_{\lambda}$ the interior of $[v]_\lambda$. Let $\mathcal{F}_{\partial{[v]_{\lambda}} \cup [v]_{\lambda}^c } \coloneqq \sigma\left(\{ \phi^N_v, v \notin [v]_{\lambda}^o \}\right)$ be the $\sigma-$algebra generated by the random variables outside $[v]_{\lambda}^o$.
We define $\phi^N_v(\lambda)$ by conditioning on the DGFF outside the box $[v]^N_\lambda$, i.e.
\begin{align}\label{equation:condition_dgff__lambda_env}
	\phi^N_v(\lambda)=\mathbb{E}\left[\phi^N_v | \mathcal{F}_{\partial{[v]_{\lambda}} \cup [v]_{\lambda}^c } \right],\quad  \lambda \in [0,1].
\end{align}
We denote by $\nabla \phi^N_v(\lambda)$ the derivative $\partial_{\lambda} \phi^N_v(\lambda)$ of the DGFF at vertex $v$ and scale $\lambda$.
Further, let $s\mapsto \sigma(s)$ be a non-negative function such that $\mathcal{I}_{\sigma^2}(\lambda)\coloneqq\int_{0}^{\lambda}\sigma^2(x)\mathrm{d}x$ is a non-decreasing function on $[0,1]$ with $\mathcal{I}_{\sigma^2}(0)=1$ and $\mathcal{I}_{\sigma^2}(1)=1$.
Then the $2$d scale-inhomogeneous DGFF on $V_N$ is a centred Gaussian field $ \psi^N \coloneqq \{\psi^N_v \}_{v \in V_N}$ defined as
\begin{align}\label{equation:1.5}
	\psi^N_v\coloneqq \int_{0}^{1} \sigma(s) \nabla \phi^N_v(s) \mathrm{d}s.
\end{align}
\quad In this paper, we consider the case when $\sigma$ is a right-continuous step function taking $M\in \mathbb{N}$ values.
Thus, there are variance parameters $(\sigma_1,\dotsc,\sigma_M) \in [0,\infty)^M$ and scale parameters $(\lambda_1,\dotsc,\lambda_M) \in (0,1]^M$ with $0 \eqqcolon \lambda_0 < \lambda_1 \dotsc < \lambda_M \coloneqq 1,$ such that
\begin{equation} \label{parameters1}
\sigma (s) = \sum_{i=1}^{M}\sigma_i \mathbbm{1}_{[\lambda_{i-1},\lambda_i)}(s), \text{ } s \in [0,1].
\end{equation}
In this case, the scale-inhomogeneous 2d DGFF or $2$d $(\sigma, \lambda)-$DGFF in \eqref{equation:1.5} takes the form
	\begin{equation}
		\psi^N_v = \sum_{i=1}^{M} \sigma_i(\phi^N_v (\lambda_i)-\phi^N_v (\lambda_{i-1})).
	\end{equation}
Similarly to \eqref{equation:condition_dgff__lambda_env}, we set for $v\in V_N$ and $\lambda \in [0,1]$,
\begin{align}
	\psi^N_v(\lambda)\coloneqq \mathbb{E}\left[\psi^N_v \bigg| \mathcal{F}_{\partial{[v]_{\lambda}} \cup [v]_{\lambda}^c }\right].
\end{align}
Next, we compute the covariances of $\{ \psi^N_v \}_{v\in V_N}$. We fix $\delta\in (0,1/2)$ and $\lambda \in (4\delta/ \log N,1/ \sqrt{\log N})$. For $N\in \mathbb{N}$ and $v,w\in V_N$, set $q_N(v,w)\coloneqq \frac{\log N- \log \|v-w\|_2}{\log N}$. For $v,w\in V_N^\delta$, we write $\mathbb{E}\left[\psi^N_v \psi^N_w\right]=\mathbb{E}\left[\left(\psi^N_v-\psi^N_v(\lambda)\right)\psi^N_w+\psi^N_v(\lambda)\psi^N_w \right]$.
By choice of $\delta$ and $\lambda$, it holds that $[v]^N_\lambda \cap \partial V_N = \emptyset$ and $[w]^N _\lambda \cap \partial V_N = \emptyset$. Therefore, we may deduce as in \cite[(A.41), (A.42)]{MR3731796},
\begin{align}\label{equation:1.9}
	\mathbb{E}\left[\left(\psi^N_v-\psi^N_v(\lambda)\right)\psi^N_w \right]=\left[ \mathcal{I}_{\sigma^2}\left(q_N(v,w)\right)- \mathcal{I}_{\sigma^2}\left(\min\left\{\lambda,q_N(v,w)\right\}\right)\right]\log N 
	+O(\sqrt{\log N}),
\end{align}
and
\begin{align}\label{equation:1.10}
	\left|\mathbb{E}\left[\psi^N_v(\lambda)\psi^N_w \right]\right|\leq O(\sqrt{\log N}).
\end{align}
Using \eqref{equation:1.9} and \eqref{equation:1.10}, we obtain for $v,w\in V_N^\delta$,
\begin{align}
	\mathbb{E}\left[\psi^{N}_v\psi^N_w\right]= \log N \mathcal{I}_{\sigma^2}\left(\frac{\log N - \log\left( \|v-w\|_2\vee 1\right)}{\log N}\right) +O(\sqrt{\log(N)}).
\end{align}
\end{definition}

\section{Main result}
The main result of this paper are tail estimates for the maximum of the scale-inhomogeneous 2d DGFF when there are finitely many scales. As simple consequences, we deduce the correct second order correction and tightness of the centred maximum.
We start with some notation.
Let $\hat{\mathcal{I}}_{\sigma^2}(s)$ be the concave hull of $\mathcal{I}_{\sigma^2}(s)$.
There exists a unique non-increasing, right-continuous step function $ s \rightarrow \bar{\sigma}(s)$, which we call 'effective variance', such that
\begin{equation}\label{eq:int_hat}
	\hat{\mathcal{I}}_{\sigma^2}(s)=\int_{0}^{s} {\bar{\sigma}}^2(r)\mathrm{d}r\eqqcolon \mathcal{I}_{{\bar{\sigma}}^2}(s) \quad \text{for all } s \in [0,1].
\end{equation}
The points where $\bar{\sigma}$ jumps on $[0,1]$ we call 
\begin{equation}
	0 \eqqcolon \lambda^0 < \lambda^1< \dotsc < \lambda^m \coloneqq 1,
\end{equation}
where $ m\leq M.$ To be consistent with previous notation (cf.\eqref{parameters1}), we write $\bar{\sigma}_l \coloneqq \bar{\sigma}(\lambda^{l-1}).$
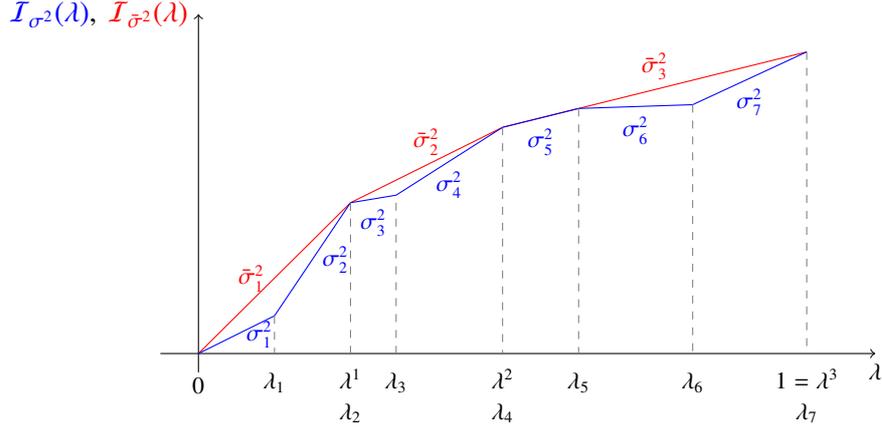
\begin{figure}
	\begin{tikzpicture}[xscale=2]
	
	\draw[->] (-0.25,0)--(4.45,0) node[below]{\footnotesize $\lambda$};
	
	
	\draw[->] (0,-0.25) -- (0,4.5) node[left] {$\color{blue}\mathcal{I}_{\sigma^2}(\lambda)$,\, $\color{red}\mathcal{I}_{\bar{\sigma}^2}(\lambda)$};
	
	
	\draw[color=black] (0pt, -5pt) node[below] {\footnotesize $0$};
	
	\draw[color=red] (0,0) -- (1,2) node[midway,left] {\footnotesize $\bar{\sigma}_1^2$};
	\draw[color=red] (1,2) -- (2,3) node[midway,above] {\footnotesize $\bar{\sigma}_2^2$};
	\draw[color=red] (2,3) -- (4,4) node[midway,above] {\footnotesize $\bar{\sigma}_3^2$};
	
	\draw[color=blue] (0,0) -- (0.5,0.5) node[midway,right] {\footnotesize $\sigma_1^2$};
	\draw[color=blue] (0.5,0.5) -- (1,2) node[midway,right] {\footnotesize $\sigma_2^2$};
	\draw[color=blue] (1,2) -- (1.3,2.1) node[midway,below] {\footnotesize $\sigma_3^2$};
	\draw[color=blue] (1.3,2.1) -- (2,3) node[midway,below] {\footnotesize $\sigma_4^2$};
	\draw[color=blue] (2,3) -- (2.5,3.25) node[midway,below] {\footnotesize $\sigma_5^2$};
	\draw[color=blue] (2.5,3.25) -- (3.25,3.3) node[midway,below] {\footnotesize $\sigma_6^2$};
	\draw[color=blue] (3.25,3.3) -- (4,4) node[midway,below] {\footnotesize $\sigma_7^2$}; 
	
	\draw[color=black] (1,-2pt) node[below] {\footnotesize $\lambda^1$};
	\draw[color=black] (0.5,-3.5pt) node[below] {\footnotesize $\lambda_1$};
	\draw[color=black] (1,-15pt) node[below] {\footnotesize $\lambda_2$};
	\draw[color=black] (1.3,-3.5pt) node[below] {\footnotesize $\lambda_3$};
	\draw[color=black] (2,-2pt) node[below] {\footnotesize $\lambda^2$};
	\draw[color=black] (2,-15pt) node[below] {\footnotesize $\lambda_4$};
	\draw[color=black] (2.5,-3.5pt) node[below] {\footnotesize $\lambda_5$};
	\draw[color=black] (3.25,-3.5pt) node[below] {\footnotesize $\lambda_6$};
	\draw[color=black] (4,-2pt) node[below] {\footnotesize $1=\lambda^3$};
	\draw[color=black] (4,-15pt) node[below] {\footnotesize $\lambda_7$};
	
	\draw[dashed,color=gray] (0.5,0.5) -- (0.5,0);
	\draw[dashed,color=gray] (1,2) -- (1,0);
	\draw[dashed,color=gray] (1.3,2) -- (1.3,0);
	\draw[dashed,color=gray] (2,3) -- (2,0);
	\draw[dashed,color=gray] (2.5,3.25) -- (2.5,0);
	\draw[dashed,color=gray] (3.25,3.3) -- (3.25,0);
	\draw[dashed,color=gray] (4,4) -- (4,0);
	
	\end{tikzpicture}
	\caption{An example of variance and effective variance.}
\end{figure}
We denote the maximum by $\psi^{*}_N\coloneqq \max_{v\in V_N} \psi^N_v$. For any, possibly finite, sequence $\{ x_i \}_{i\geq 0}$ of real numbers we denote by $\Delta x_i= x_i-x_{i-1}$ the discrete increment. It turns out that the concave hull of $\mathcal{I}_{\sigma^2}$, denoted $\hat{\mathcal{I}}_{\sigma^2}$, gives the desired control for the first order of the maximum.
Arguin and Ouimet \cite[Theorem 1.2]{MR3541850} determined the correct first order behaviour, i.e. they showed that in probability,
\begin{equation}
		\lim_{N \rightarrow \infty} \frac{\psi^{*}_N}{2 \log(N)}= \mathcal{I}_{\bar{\sigma}}(1)=\sum_{i=1}^{m} \bar{\sigma}_i \Delta \lambda^i.
\end{equation}
In the following, the goal is to prove a second order correction and tightness of the maximum around its mean. Let $\pi_j$ be the unique index such that for $1 \leq j\leq m$ we have $\lambda^j= \lambda_{\pi_j}$.
Moreover, we write $t^j=\lambda^j \frac{\log N}{\log 2}$ as well as $t_j= \lambda_j \frac{\log N}{\log 2}$. We set
\begin{align}
m_N\coloneqq \sum_{j=1}^{m}2{\log2}\bar{\sigma}_j \Delta t^j  - \frac{(w_j \bar{\sigma}_j \log(\Delta t^j))}{4},
\end{align}
	where
\begin{equation}
w_j= \begin{cases}
3, \quad \mathcal{I}_{\bar{\sigma}^2}|_{( \lambda^{j-1},\lambda^j]} \equiv \mathcal{I}_{\sigma^2}|_{(\lambda^{j-1},\lambda^j]} \\
1, \quad else
\end{cases}
\end{equation}
The following theorem establishes tail estimates of the maximum centred by $m_N$.
\begin{thm}\label{thm1}
	Let $N\in \mathbbm{N}$ and $\{\psi^N_v \}_{v \in V_N}$ be a $2$d $(\sigma,\lambda)$-DGFF on $V_N$ with $M\in \mathbb{N}$ scales. Assume that on each interval $[\lambda^{i-1},\lambda^i]$ and $i=1,\dotsc,m$, we have either $\mathcal{I}_{\sigma^2} \equiv \mathcal{I}_{\bar{\sigma}^2} $ or $\mathcal{I}_{\sigma^2} < \mathcal{I}_{\bar{\sigma}^2}$
	There exist constants $C,c>0$ such that for any $x\in [0,\sqrt{\log N}]$,
	\begin{align}\label{eq:thm1_right_tail}
		C^{-1}\left(1+x\mathbbm{1}_{\sigma_1=\bar{\sigma}_1}\right)e^{-x \frac{2}{\bar{\sigma}_1}}\leq\mathbbm{P}\left(\max_{v \in V_N}\psi_v^N\geq m_N+x\right)\leq C(1+x\mathbbm{1}_{\sigma_1=\bar{\sigma}_1}) e^{-x \frac{2}{\bar{\sigma}_1}}.
	\end{align}
	and for any $0\leq \lambda \leq (\log\log N)^{\sfrac{2}{3}}$,
	\begin{align}\label{eq:thm1_left_tail}
		\mathbbm{P}\left(\max_{v \in V_N}\psi_v^N\leq m_N-\lambda\right)\leq Ce^{-c\lambda}.
	\end{align}
\end{thm}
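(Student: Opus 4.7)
The plan is to reduce the statement to a pair of first/second moment computations over points $v\in V_N^\delta$ whose scale-trajectory $\lambda\mapsto \psi^N_v(\lambda)$ lies below a carefully chosen barrier curve on $[0,1]$. The whole strategy rests on the observation that, by \eqref{equation:1.9}--\eqref{equation:1.10}, for every $v$ the family $(\psi^N_v(\lambda))_\lambda$ behaves (up to $O(\sqrt{\log N})$ Gaussian errors) like a Brownian motion at time $\log N$ with variance profile $\mathcal{I}_{\sigma^2}$. On an interval $[\lambda^{j-1},\lambda^j]$ on which $\mathcal{I}_{\sigma^2}\equiv\mathcal{I}_{\bar\sigma^2}$ the relevant sub-process is critical (branching-like) and will require a full ballot-type barrier producing the $\tfrac34$ log-correction; on an interval where $\mathcal{I}_{\sigma^2}<\mathcal{I}_{\bar\sigma^2}$ in the interior, the sub-process is strictly sub-critical inside the interval and critical only at the two endpoints, which will produce the $\tfrac14$ log-correction. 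This is exactly what the weights $w_j$ in the definition of $m_N$ encode.

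For the right-tail upper bound I would set, for each index $j\in\{1,\dots,m\}$, a piecewise-linear upper envelope $\gamma_j(\lambda)$ for $\psi^N_v(\lambda)$ on $[\lambda^{j-1},\lambda^j]$: on a \emph{critical} interval, $\gamma_j$ bends down by $\tfrac{3}{4}\bar\sigma_j \log t^j$ together with a concave-down barrier at every dyadic intermediate scale; on a \emph{strictly sub-critical} interval, $\gamma_j$ interpolates linearly between the endpoint values with only a $\tfrac14\bar\sigma_j\log t^j$ correction. A union bound over $v\in V_N^\delta$, together with the standard Gaussian ballot estimate for the barrier probability (cf.\ the treatment of the ordinary DGFF in \cite{MR3509015,MR3731796}), yields the bound $C(1+x\mathbbm{1}_{\sigma_1=\bar\sigma_1})e^{-2x/\bar\sigma_1}$. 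The indicator $\mathbbm{1}_{\sigma_1=\bar\sigma_1}$ appears because the prefactor $1+x$ is exactly the ballot-type correction coming from the first interval, which is present iff that interval is itself critical. The boundary region $V_N\setminus V_N^\delta$ is discarded using \cite{MR3541850} and a Gaussian comparison step.

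For the right-tail lower bound I would apply the truncated second moment method to the random variable counting points $v$ with $\psi^N_v\geq m_N+x$ whose trajectory respects the barrier above. Conditioning on the scales $(\psi^N_v(\lambda^j))_{j=1,\dots,m-1}$, the covariance between pairs $(v,w)$ factors according to the smallest $\lambda^j$ at which $v$ and $w$ are "resolved", so the second moment splits into $m$ contributions indexed by the segment where decoupling happens. Inside each segment a standard barrier/entropic repulsion estimate, again as in \cite{MR3509015}, bounds the pair contribution by the product of individual probabilities up to a constant. Paley--Zygmund then delivers the matching lower bound $C^{-1}(1+x\mathbbm{1}_{\sigma_1=\bar\sigma_1})e^{-2x/\bar\sigma_1}$. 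For the left tail I would tile $V_N^\delta$ by sub-boxes of side length $N^{1-\lambda^1}$; by \eqref{equation:1.10} the restricted fields are, after Gaussian comparison, essentially independent copies of a scale-inhomogeneous DGFF with one fewer segment. Iterating this tiling through all $m$ segments and using the right-tail lower bound to get a uniform $\Theta(1)$ lower bound for the maximum on each sub-box to exceed its median produces the exponential bound $Ce^{-c\lambda}$; the range $\lambda\le(\log\log N)^{2/3}$ comes from controlling the accumulated comparison errors through the iteration.

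The main obstacle will be the second moment step in segments where the variance profile switches type at $\lambda^j$, since pairs $(v,w)$ whose scale-decoupling happens exactly at such a junction pick up barrier constraints of two different flavours on either side, and the log-corrections $w_j$ and $w_{j+1}$ must be matched consistently across the junction. Making $m_N$ come out with precisely the prefactor $w_j\bar\sigma_j/4$ in each segment requires tracking the entropic repulsion constants through the conditioning on $\psi^N_v(\lambda^j)$ for all $j$ simultaneously, which in the critical/critical and sub-critical/sub-critical cases is classical but in the mixed case requires a new, careful two-sided ballot estimate at the junction scale.
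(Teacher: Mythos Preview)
Your approach is viable in outline but takes a genuinely different route from the paper. The paper \emph{never} runs the barrier or second-moment computation on the field $\psi^N$ itself. Instead it introduces two auxiliary hierarchical processes, an inhomogeneous BRW $R^N$ and a modified inhomogeneous BRW $S^N$, whose covariances agree with those of $\psi^N$ up to an additive $O(1)$ (Lemma~\ref{lemma:cov_comp}). The right-tail upper bound is obtained purely by Slepian comparison $\psi^N\leadsto R^{2^\kappa N}$ (Lemma~\ref{lemma:5.2}), after which the paper simply quotes the known tail bound for inhomogeneous BRW from \cite{MR3361256}; no barrier is ever set for $\psi^N$. The right-tail lower bound and the left tail are likewise transferred by Slepian to the MIBRW $S^N$, and it is only on $S^N$---which has an exact tree structure with truly independent increments across scales---that the truncated second-moment and sprinkling computations are carried out (Lemmas~\ref{lemma:right_tail_lb_MIBRW}--\ref{lemma:up_bound_SN_left_tail}). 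The junction issue you flag as the ``main obstacle'' therefore disappears: the second moment on $S^N$ collapses to a one-dimensional sum over the decoupling scale, and the paper moreover reduces at once to the case $m=1$ by independence of the MIBRW increments on the effective intervals.

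What your direct approach buys is self-containment: you would not need the external input \cite{MR3361256}, and you avoid introducing two auxiliary fields. What it costs is that all the delicate estimates (ballot problem, pair decoupling at the branching scale, control of the barrier-violation event in the union bound) must be performed on a process whose hierarchical structure is only approximate; in particular your appeal to ``$O(\sqrt{\log N})$ Gaussian errors'' from \eqref{equation:1.9}--\eqref{equation:1.10} is too coarse for the second moment, and you would in fact need the $O(1)$ covariance control of Lemma~\ref{lemma:cov_comp} together with a careful treatment of the conditional law of $(\psi^N_w(\lambda))_\lambda$ given the $v$-trajectory. Your left-tail scheme of iterating through all $m$ segments is also more elaborate than what the paper does: there the sprinkling is a single step on $S^N$, with no induction on $m$.
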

Note that the result for the right-tail in \eqref{eq:thm1_right_tail} is precise up to a multiplicative constant.
For values $x>\sqrt{\log N}$, by Borell's inequality (see \autoref{theorem:borell}) and \cite[Lemma~A.3]{MR3541850}, there is a constant $c_{\sigma}\in (0, \infty)$, depending only on the variance parameter $\sigma$, such that
\begin{align}
\mathbbm{P}\left(|\psi_N^{*}-m_N|\geq x \right) \leq 2 e^{-c_{\sigma}x^2/\log(N)}.
\end{align}
As a simple consequence of \autoref{thm1}, we obtain the following corollary.
\begin{cor}\label{theorem:thm_tightness}
	Under the same assumptions of \autoref{thm1}, the sequence of the centred maximum $\{\psi_N^{*}-m_N\}_{N\geq 0}$ is tight.
	In particular,
	\begin{align}
	\mathbb{E}\left[\psi^*_N\right] =m_N+O(1),
	\end{align}
	where the term $O(1)$ is bounded by a constant which is uniform in $N$.
\end{cor}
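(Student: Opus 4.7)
The corollary has two parts: tightness of $\{\psi^*_N - m_N\}_{N\geq 0}$ and the bound $\mathbb{E}[\psi^*_N] = m_N + O(1)$. Both will follow from \autoref{thm1} combined with the Borell-type bound quoted just below it, by splitting tail integrals between the range where \autoref{thm1} applies directly and a far tail handled by Borell.

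Tightness is direct: for any $\varepsilon>0$ I would pick $K=K(\varepsilon)$ large enough that $C(1+K)e^{-2K/\bar{\sigma}_1}<\varepsilon/2$ and $Ce^{-cK}<\varepsilon/2$, with $C,c$ the constants from \autoref{thm1}. For all $N$ with $K\leq \sqrt{\log N}$ and $K\leq (\log\log N)^{2/3}$, the estimates \eqref{eq:thm1_right_tail} and \eqref{eq:thm1_left_tail} give $\mathbb{P}(|\psi^*_N-m_N|>K)<\varepsilon$, and the finitely many smaller $N$ are absorbed by increasing $K$. For the expectation I would start from
\begin{equation*}
\mathbb{E}[\psi^*_N - m_N] = \int_0^\infty \mathbb{P}(\psi^*_N - m_N > t)\,\mathrm{d}t - \int_0^\infty \mathbb{P}(\psi^*_N - m_N < -t)\,\mathrm{d}t,
\end{equation*}
and show each integral is $O(1)$ uniformly in $N$. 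On the inner ranges $[0,\sqrt{\log N}]$ for the right tail and $[0,(\log\log N)^{2/3}]$ for the left tail, the bounds of \autoref{thm1} integrate to finite constants independent of $N$ (for instance $\int_0^\infty C(1+t)e^{-2t/\bar{\sigma}_1}\,\mathrm{d}t<\infty$).

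The outer range is the technical point, since a naive application of the Borell bound $2e^{-c_\sigma t^2/\log N}$ integrated from $\sqrt{\log N}$ yields only $O(\sqrt{\log N})$. I would split it further, say at $2\log N$: on $[\sqrt{\log N},2\log N]$ the monotonicity $\mathbb{P}(\psi^*_N-m_N>t)\leq \mathbb{P}(\psi^*_N-m_N>\sqrt{\log N})$ combined with \autoref{thm1} at $t=\sqrt{\log N}$ gives a uniform bound of order $\sqrt{\log N}\,e^{-2\sqrt{\log N}/\bar{\sigma}_1}$, which multiplied by the window width $\leq 2\log N$ is still $o(1)$; on $[2\log N,\infty)$ one has $t^2/\log N\geq t$, so Borell is exponentially decaying in $t$ and integrates to $o(1)$. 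The left tail is handled symmetrically. The main obstacle is precisely this intermediate window where neither bound alone is strong enough, and the resolution is to combine monotonicity with the super-polynomial smallness of $\mathbb{P}(\psi^*_N-m_N>\sqrt{\log N})$ supplied by \autoref{thm1}, which keeps the total contribution $O(1)$ rather than $O(\sqrt{\log N})$ uniformly in $N$.
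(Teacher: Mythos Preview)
The paper gives no explicit proof of this corollary, calling it a ``simple consequence'' of \autoref{thm1}. Your tightness argument is correct, and your treatment of the right tail in the expectation bound is also correct: the split at $\sqrt{\log N}$ and $2\log N$ works because the probability bound $C(1+\sqrt{\log N})e^{-2\sqrt{\log N}/\bar\sigma_1}$ at the first endpoint decays faster than any power of $\log N$, so multiplying by a window of width $O(\log N)$ still gives $o(1)$.

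The gap is in the sentence ``The left tail is handled symmetrically.'' It cannot be, because the two tails in \autoref{thm1} are not symmetric: the left-tail bound \eqref{eq:thm1_left_tail} is only stated for $\lambda\in[0,(\log\log N)^{2/3}]$, and at the endpoint it gives merely
\[
\mathbb{P}\bigl(\psi^*_N\le m_N-(\log\log N)^{2/3}\bigr)\le Ce^{-c(\log\log N)^{2/3}}.
\]
This is \emph{not} small compared to any power of $\log N$: with $u=\log\log N$ one has $(\log N)\,e^{-cu^{2/3}}=e^{u-cu^{2/3}}\to\infty$. Hence on the intermediate window $[(\log\log N)^{2/3},\,2\log N]$ (or even $[(\log\log N)^{2/3},\sqrt{\log N}]$) the monotonicity trick you used for the right tail produces a contribution that diverges, and the Borell bound $2e^{-c_\sigma t^2/\log N}$ integrated over this range is of order $\sqrt{\log N}$, not $O(1)$. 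So with only the inputs you cite, you have shown $\mathbb{E}[(\psi^*_N-m_N)_+]=O(1)$ but not $\mathbb{E}[(\psi^*_N-m_N)_-]=O(1)$.

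To close the gap one has to go back into the proof of the left-tail estimate rather than use \autoref{thm1} as a black box: the argument of \autoref{lemma:up_bound_SN_left_tail} actually produces a bound of the form $\exp(-c\,e^{c'\lambda})$ (doubly exponential) from \eqref{eq:8.80}, and the restriction $\lambda\le(\log\log N)^{2/3}$ can be relaxed substantially (only $N'\ge 4$, i.e.\ $\lambda=O(\log N)$, is really needed). Either of these stronger statements is enough to make the intermediate left-tail window integrable uniformly in $N$.
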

An interesting fact is that the profile of the variance matters both for the leading term and the logarithmic correction. This phenomenon was first observed in the context of the GREM by Bovier and Kurkova \cite{Derrida2,MR2070334,MR2070335}, and in the context of the time-inhomogeneous branching Brownian motion/branching random walk by Bovier and Hartung \cite{MR3164771,MR3351476}, Fang and Zeitouni \cite{MR2968674}, Maillard and Zeitouni \cite{MR3531703} and Mallein \cite{MR3373310}.
\begin{rem}
	Regarding the additional assumption on the variance profile in \autoref{thm1}, we expect that in general there are essentially two properties which determine the logarithmic correction. For each interval $[\lambda^{j-1},\lambda^j]$ one has to see whether the effective variance and the real variance coincide in a neighbourhood at the beginning or the end of the interval. If neither is the case we have the $1/2$ correction. If it coincides in a neighbourhood at exactly one end point, we expect the factor to be $2/2$ and if it coincides in neighbourhoods at the beginning and the end, the correction factor should be $3/2$.
	If one considers the case of strictly decreasing variance $\sigma$ in \eqref{equation:1.5}, we expect the second order correction to be proportional to $\log^{1/3}(N)$ as observed in the analogue setting for variable-speed BBM \cite{MR2968674}.
\end{rem}
\subsection{Overview of related results}
In the case when $\sigma \equiv 1$, the 2d scale-inhomogeneous DGFF simply is the 2d DGFF. The maximum and more generally the extremal process of the DGFF has been the subject of intense investigations. Let $\phi_N^* \coloneqq \max_{v \in V_N} \phi_v^N$ be the maximum of the DGFF.
Through the works of Bolthausen, Deuschel and Giacomin \cite{MR1880237} as well as Bramson and Zeitouni \cite{MR2846636} one obtains,
\begin{align}
	\phi_N^*= 2 \log N - \frac{3}{4} \log \log N + Y,
\end{align}
where $Y$ is random variable of order $o(\log \log N)$ in probability. Bramson and Zeitouni further deduced that the centred maximum $\phi_N^*-\mathbb{E}\left[\phi_N^*\right]$ is tight as a sequence of real random variables.
Convergence of the centred maximum was then shown by Bramson, Ding and Zeitouni in \cite{MR3433630}. In \cite{MR3509015,2016arXiv160600510B}, Biskup and Louidor proved that the extremal process converges to a cluster Cox process.\\
Another closely related model is (variable-speed) branching Brownian motion (BBM).
It can be considered as the analogue model to the scale-inhomogeneous DGFF in the context of BBM. It first appeared in a paper by Derrida and Spohn \cite{MR971033}. To define variable-speed BBM, fix a Galton Watson tree, a time horizon $t>0$ and let $A:[0,1]\rightarrow [0,1]$, strictly increasing with $A(0)=0,\, A(1)=1$ and bounded second derivatives. The overlap $d(v,w)$ for leaves $v,w$ in the tree is the time of their most recent common ancestor. Variable-speed BBM in time $t$ and with time change $t A(\cdot / t)$ can then be defined as a centred Gaussian process $x$ indexed by the leaves of the tree and covariance $t A(d(v,w)/t)$, where $v$ and $w$ are leaves. BBM is the special case when $A(x)=x$ for $x\in [0,1]$, and coincides with the generalized random energy model (GREM) on the Galton-Watson tree. Compared to the 2d DGFF, its hierarchical structure makes it easier to analyse and the extremes of BBM are particularly well understood (see \cite{MR3101852,MR3129797,2014arXiv1412.5975B,MR0494541}). The extreme values and more general the extremal process for variable-speed BBM were investigated in \cite{MR3164771,MR3351476,MR2968674,MR2981635,MR3531703}. In particular, the first order and  second order correction of the maximum in the regime of weak correlations, i.e. when $A(s)<s$ for $s\in (0,1)$, is identical to the uncorrelated regime. In this regime, convergence of the extremal process was proved by Bovier and Hartung in \cite{MR3164771,MR3351476}. In the case of decreasing speed with finitely many changes in speed, the global maximum is a simple concatenation of the maximum at speed change. When the speed is strictly decreasing, i.e. when $A^{\prime \prime} <0$, Bovier and Kurkova \cite{MR2070334,MR2070335} showed that the first order is as in all other cases determined by the concave hull of $A$. The second order correction is no longer logarithmic but proportional to $t^{1/3}$, which was shown by Maillard and Zeitouni in \cite{MR3531703}, building upon the work by Fang and Zeitouni in \cite{MR2981635}.\\
In the discrete analogue model of (variable-speed) BBM, the (time-inhomogeneous) branching random walk (BRW) on the Galton Watson tree, there are results on the first and second order correction by Fang and Zeitouni \cite{MR2968674}, Mallein \cite{MR3361256} and Ouimet \cite{2015arXiv150908172O}. A notable difference in the context of (time-inhomogeneous) BRW is that one does not need to assume that increments are Gaussian (see \cite{MR3361256}). For the usual BRW, A\"{\i}d\'{e}kon proved convergence of the centred maximum \cite{MR3098680} and Madaule of the extremal process \cite{MR3615081}.
\subsection{Idea of proof}
The main idea to prove \autoref{thm1} is to use Gaussian comparison to compare the distribution of the centred maximum of the scale-inhomogeneous DGFF with the distribution of two auxiliary Gaussian fields, a time-inhomogeneous BRW (IBRW) and an modified inhomogeneous branching random walk (MIBRW). The time-inhomogeneous BRW is constructed in such a way that it is slightly less correlated than the scale-inhomogeneous DGFF which allows to use an available upper bound on the right tail of the maximum of the time-inhomogeneous BRW. The MIBRW has correlations that differ from those of the scale-inhomogeneous DGFF inside the field only up to a uniformly bounded constant. This allows, in a first step, to use Gaussian comparison to reduce the remaining lower bound on the right tail of the maximum to a corresponding lower bound on the right tail of the maximum of the MIBRW. In a second step, we prove the lower bound on the right tail of the centred maximum of the MIBRW that, together with the so-called ``sprinkling method'', also allows to deduce the upper bound on the left tail. The remaining lower bound on the right tail is achieved by a modified second moment analysis.\\

\textit{Outline of the paper:}
In the next section we  define two auxiliary Gaussian processes, the time-inhomogeneous branching random walk (IBRW) and the modified time-inhomogeneous branching random walk (MIBRW), and estimate their covariance structure.
In \autoref{sec:tail_estimates} we provide the necessary tail estimates that allow us to deduce \autoref{thm1}. We start with the upper bound on the right tail, then prove the lower bound on the right tail and finally, show the upper bound on the left tail.
In \autoref{appendix:A} we provide the Gaussian comparison theorems we use in the proof and Borell's Gaussian concentration inequality. In \autoref{appendix:B} we prove the covariance estimates stated in \autoref{sec:preparations}.

\section*{Acknowledgements}
I would like to thank my advisor, Anton Bovier, Lisa Hartung and an anonymous referee for their very careful reading and for their valuable comments that led to improvements of this paper.
\nocite{*}

\section{Auxiliary processes and covariance estimates}\label{sec:preparations}

Consider $N=2^n$ for some $n \in \mathbbm{N}$. For $k = 0,1,\dotsc,n$ let $\mathcal{B}_k$ denote the collection of subsets of $\mathbbm{Z}^2$ consisting of squares of side length $2^k -1$ with corners in $\mathbbm{Z}^2$ and let $\mathcal{BD}_k$ denote the subset of $\mathcal{B}_k$ consisting of squares of the form $([0,2^k - 1]\cap \mathbbm{Z})^2+ (i2^k,j2^k).$ We remark that the collection $\mathcal{BD}_k$ partitions $\mathbbm{Z}^2$ into disjoint squares. For $v \in V_N,$ let $\mathcal{B}_k (v)$ denote those elements $B \in \mathcal{B}_k (v)$ with $v \in B$. Likewise define $\mathcal{BD}_k (v)$, i.e. for $v\in V_N$, $B\in \mathcal{BD}_k (v)$ if and only if $v\in B$. One should note that $\mathcal{BD}_k (x)$ contains exactly one element, whereas $\mathcal{B}_k (x)$ contains $2^{2k}$ elements.
\begin{definition}[Time-inhomogeneous branching random walk (IBRW)]\label{definition:ibrw}
	Let $\{a_{k,B}\}_{k\geq 0,B \in \mathcal{BD}_k}$ be an i.i.d. family of standard Gaussian random variables. We define the time-inhomogeneous branching random walk $ \{ R_z^N \}_{z\in V_N}$ by
\begin{equation}\label{equation:def_ibrw}
R_z^N (t)\coloneqq \sum_{k=n-t}^{n} \sum_{B \in \mathcal{BD}_k (z)} \sqrt{\log(2)} \tilde{\sigma}\left(\frac{n-k}{n}\right) a_{k,B},
\end{equation}
where $0 \leq t \leq n,$ $t \in \mathbbm{N}$ and $s\mapsto \tilde{\sigma}(s)$ is a non-negative function, for $s\in [0,1]$.
We specify the function $s\mapsto \tilde{\sigma}(s)$ later in the proof (see p. \pageref{eq:tilde_sigma}).
\end{definition}
It turns out that, due to its hierarchical structure, the IBRW is less correlated than the scale-inhomogeneous DGFF, which is beneficial to obtain upper bounds using Gaussian comparison. But this also makes it unsuitable to obtain sufficient lower bounds on the maximum value. We therefore introduce another auxiliary process whose covariance structure is much closer to the scale-inhomogeneous DGFF, and is defined by taking uniform averages of IBRWs.
For $v \in V_N$, let $\mathcal{B}_k^N (v)$ be the collection of subsets of $\mathbbm{Z}^2$ consisting of squares of size $2^k$ with lower left corner in $V_N$. For two sets $B,B^\prime\subset \mathbb{Z}^2$ we write $B\sim_N B^\prime$, if there exist integers $i,j$ such that $B^\prime= B+ (iN, jN)$. Let $\{b_{k,B} \}_{k \geq 0 , B \in \mathcal{B}_k^N}$ denote an i.i.d. family of centred  Gaussian random variables with unit variance and set
\begin{equation}
b_{k,B}^N \coloneqq \begin{cases}
b_{k,B}, \, B \in \mathcal{B}_k^N, \\
b_{k,B^{'}}, \, B \sim_N B^{'} \in \mathcal{B}_k^N.
\end{cases}
\end{equation}
\begin{definition}[Modified inhomogeneous branching random walk (MIBRW)]
	The modified inhomogeneous branching random walk (MIBRW) $\{S_v^N\}_{v \in V_N}$ is defined by
	\begin{equation}\label{equation:def_MIBRW}
	S_z^N (t) \coloneqq \sum_{k=n-t}^{n} \sum_{B \in \mathcal{B}_k^N (z)} 2^{-k}\sigma\left(\frac{n-k}{n}\right) b_{k,B}^N,
	\end{equation}
	where $0 \leq t \leq n,$ $t \in \mathbbm{N}$ and $\sigma$ is defined as in \eqref{parameters1}.
\end{definition}
\subsection{Covariance estimates}
In order to be able to apply Gaussian comparison, we need to compare the correlations of the processes introduced previously. We write $\log_{+}(x)=\max(0,\log_2 (x))$. Further, let $\|\cdot \|_2$ be the usual Euclidean distance and $\|\cdot \|_\infty$ the maximum distance. As we are working in two dimensions, they satisfy the relation $\|x-y\|_\infty \leq \|x-y\|_2 \leq \sqrt{2}\|x-y\|_\infty.$ In addition, we introduce for $v,w\in V_N$ two distances on the torus induced by $V_N$,
\begin{align}
	&d^N(v,w) \coloneqq \min_{z:\, z-w \in (N\mathbb{Z})^2} \|v-z\|_2, &d_\infty^N(v,w)\coloneqq \min_{z: \, z-w \in (N\mathbb{Z})^2}\|v-z\|_\infty.
\end{align}
Note that the Euclidean distance on the torus is smaller than the standard Euclidean distance, i.e. for all $v,w\in V_N$, it holds $d^N(v,w)\leq \|v-w\|_2.$
However, equality trivially holds if one restricts oneself on a smaller box, e.g. if $v,w \in (\sfrac{N}{4},\sfrac{N}{4}) + V_{\sfrac{N}{2}} \subset V_N.$
In the following we call $\{\tilde{S}^N_v\}_{v\in V_N}$ the homogeneous version of the process $\{S^N_v\}_{v\in V_N}$ which was introduced in \cite{MR2846636}, i.e. we assume that there is only one scale $\lambda_1=1$ with variance parameter $\sigma_1=1$.
\begin{lemma}\label{lemma:cov_comp}
	There exists a constant $C$ independent of $N=2^n$ such that for any $v,w \in V_N$,
	\begin{itemize}
		\item[i.] $\left|\mathbb{E}\left[\tilde{S}^N_v\tilde{S}^N_w\right]-(n- \log_{+}(d^N (x,y)))\right| \leq C, $
		\item[ii.] $\left|\mathbb{E}\left[S^N_v S^N_w\right] - n \mathcal{I}_{\sigma^2}\left(\frac{n-\log_{+} d^N(v,w)}{n}\right)\right| \leq C.$
	\end{itemize}
	Further, for any $x,y \in V_N +(2N,2N)\subset V_{4N},$
	\begin{itemize}
		\item[iii.] $\left|\mathbb{E}\left[\phi^{4N}_v \phi^{4N}_w \right] - \log(2)(n-\log_{+}(\|v-w\|_2) )\right| \leq C, $
		\item[iv.] $\left| \mathbb{E}\left[\psi^{4N}_v \psi^{4N}_w\right]-\log(2)\mathbb{E}\left[S^N_v S^N_w \right]\right| \leq C.$ \label{cov_est_6}
	\end{itemize}
	\begin{proof}
		See \autoref{appendix:B}.
	\end{proof}
\end{lemma}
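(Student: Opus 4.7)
I would treat the four estimates as variations on a single counting argument, using the fact that both the MIBRW and the DGFF admit a multiscale decomposition in which the scale-$k$ contribution sees only pairs at distance $\lesssim 2^{k}$. The main bookkeeping input is: for any $v,w \in V_N$ and any $0 \le k \le n$, the number of boxes $B \in \mathcal{B}_k^N$ with $\{v,w\} \subset B$ equals $2^{2k}$ when $2^k \ge d_\infty^N(v,w)$ (up to an additive $O(d_\infty^N(v,w)\cdot 2^k)$ loss when the two points are close to one another on the torus) and equals $0$ otherwise. An analogous statement with $\mathcal{BD}_k$ holds for the IBRW, where the count is either $1$ or $0$ but only once $2^k \gtrsim d_\infty(v,w)$ in a way that may differ by $O(1)$ scales.

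\textbf{Parts (i) and (ii).} For the homogeneous MIBRW, expanding \eqref{equation:def_MIBRW} gives
\begin{align}
	\mathbb{E}\bigl[\tilde S_v^N \tilde S_w^N\bigr]
	= \sum_{k=0}^{n} 2^{-2k}\,\#\{B \in \mathcal{B}_k^N : v,w \in B\}.
\end{align}
Using the counting estimate above, the summand is $1+O(2^{-k}d_\infty^N(v,w))$ for $2^k \ge d_\infty^N(v,w)$ and $0$ otherwise, so summing over $k$ yields $n - \log_+ d^N(v,w) + O(1)$, since $\|\cdot\|_2$ and $\|\cdot\|_\infty$ differ by a factor at most $\sqrt 2$ and the geometric tail of the $O(\cdot)$ terms converges. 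This gives (i). For (ii) one repeats the expansion with the extra factor $\sigma((n-k)/n)^2$; the same counting plus a Riemann-sum comparison turns $\sum_{k \ge \log_+ d^N(v,w)} 2^{-2k}\sigma((n-k)/n)^2\cdot \#\{\cdot\}$ into $\int_0^{(n-\log_+ d^N(v,w))/n}\sigma^2(s)\,ds = \mathcal{I}_{\sigma^2}((n-\log_+d^N(v,w))/n)$ up to a uniformly bounded error (since $\sigma$ is bounded and has only $M$ jumps, the Riemann error is $O(1)$).

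\textbf{Part (iii).} I would just invoke the Green-function asymptotics \eqref{equation:correlation_dgff}, rescaled from $V_N$ to $V_{4N}$. The hypothesis $v,w \in V_N + (2N,2N)$ guarantees $v,w \in V_{4N}^{\delta}$ for $\delta = 1/4$, so the distance to $\partial V_{4N}$ is $\gtrsim N$ and \cite[Lemma 2.1]{MR2243875} applies to give $G_{V_{4N}}(v,w) = \log(4N) - \log(\|v-w\|_2\vee 1) + O(1) = \log(2)(n - \log_+\|v-w\|_2) + O(1)$.

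\textbf{Part (iv).} This is the step where I expect the bulk of the work to lie, because it requires comparing a continuously-defined Gaussian field with a hierarchical one. The plan is to mimic the decomposition used to produce \eqref{equation:1.9} and \eqref{equation:1.10} in the definition of the $(\sigma,\lambda)$-DGFF: for each $v \in V_N+(2N,2N)$ and each scale $\lambda^j$ one has $[v]_{\lambda^j}^{4N} \cap \partial V_{4N} = \emptyset$, hence the scale-$\lambda$ ``annular increments'' $\psi_v^{4N}(\lambda^{j})-\psi_v^{4N}(\lambda^{j-1})$ have covariance $(\bar\sigma_j^2 + o(1))\log(2)\cdot\#\{\text{common scales in }[\lambda^{j-1},\lambda^{j}]\}$, which up to $O(1)$ matches the contribution of the MIBRW summands at levels $k$ with $(n-k)/n \in [\lambda^{j-1},\lambda^{j}]$ computed in part (ii). Summing over $j$ and invoking (iii) to handle the intermediate Green-function estimate produces (iv). The main obstacle is controlling the cross-terms $\mathbb{E}[(\psi_v^{4N}(\lambda^{j-1})-\psi_v^{4N}(\lambda^{j-2}))(\psi_w^{4N}(\lambda^{j})-\psi_w^{4N}(\lambda^{j-1}))]$ in which the two scales disagree; these behave like harmonic-measure contributions that telescope and are bounded uniformly by \eqref{equation:1.10} together with the sharper Green-function asymptotics from (iii), so the total error stays $O(1)$ as required.
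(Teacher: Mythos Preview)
Your treatment of parts (i)--(iii) is essentially the same as the paper's: the MIBRW covariance is computed by counting common boxes at each scale (the paper writes the count as $(2^k-r_1)(2^k-r_2)$ and bounds it the same way you do), and (iii) is just the Green-function asymptotic applied on $V_{4N}$.

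Part (iv), however, has a real gap. First, a notational slip: the field $\psi^{4N}$ is built from the \emph{actual} scales $\lambda_i$ and variances $\sigma_i$, not from the effective quantities $\lambda^j,\bar\sigma_j$ coming from the concave hull, so your decomposition should be over $\lambda_0<\lambda_1<\dots<\lambda_M$. More seriously, your proposed control of the cross-terms does not work. The bound \eqref{equation:1.10} you cite gives only $O(\sqrt{\log N})$, and moreover the $\lambda$ appearing there is not a generic scale $\lambda_i$ but is deliberately taken in the tiny window $(4\delta/\log N,\,1/\sqrt{\log N})$, where the variance of $\psi^N_v(\lambda)$ is itself $O(\sqrt{\log N})$; this is precisely why \eqref{equation:1.10} holds, and it says nothing about increments between macroscopic scales $\lambda_{i-1},\lambda_i$. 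Invoking ``sharper Green-function asymptotics from (iii)'' does not repair this: (iii) controls $G_{V_{4N}}$, not the conditional expectations defining $\phi^{4N}_v(\lambda_i)$.

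What is actually needed, and what the paper proves separately as \autoref{lemma:cov_estimate_sd_dgff}, is the $O(1)$ estimate
\[
\mathbb{E}\bigl[\Delta\phi^{4N}_v(\lambda_i)\,\Delta\phi^{4N}_w(\lambda_j)\bigr]
= \Delta\lambda_i\,\log(4N)\,\mathbbm{1}_{\{i=j\}} + O(1)
\]
for scales below the branching scale. The diagonal case is handled by introducing auxiliary boxes $B,\tilde B$ centred at the midpoint of $v$ and $w$ with $[v]_{\lambda_i}\cup[w]_{\lambda_i}\subset B$ and $\tilde B\subset[v]_{\lambda_{i-1}}\cap[w]_{\lambda_{i-1}}$, then using the conditional-covariance identity together with the tower property to show that several cross-terms vanish \emph{exactly} (not merely up to $O(1)$), while the remaining ones are bounded by Cauchy--Schwarz and Green-kernel estimates on boxes whose side-length ratio is bounded. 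The off-diagonal case $|i-j|=1$ is handled by a further splitting at scale $\lambda_{i-1}-\log 4/\log N$ and another tower-property cancellation. None of this ``telescopes'' in the naive sense; the vanishing of the dominant terms comes from measurability relations between the nested $\sigma$-algebras, and this is the step your sketch is missing.
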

\begin{rem}\label{rem:additional_assumptions}
	The assumption $N=2^n$ for $n\in \mathbbm{N}$ mainly simplifies notation and also the proof, however without removing essential difficulties.
\end{rem}
An important tool in the analysis of the scale-inhomogeneous DGFF is the Gibbs-Markov property of the DGFF. For two sets $U\subset V \subset \mathbb{Z}^2$ the DGFF on V can be decomposed into a sum of a DGFF on U and an independent Gaussian field, i.e.
\begin{align}
\phi^V_u \overset{d}{=} \phi^U_u + \mathbb{E}\left[\phi^V_u| \sigma\left(\phi^V_v:v\in V\setminus U^o \right)\right],\quad u\in V.
\end{align}
Further, if $A,B \subset V$ such that $A^o \cap B^o = \emptyset$, then $\{ \phi^V_u - \mathbbm{E}[\phi^V_u | \mathcal{F}_{\partial A}] \}_{u \in A}$ is a DGFF on A, independent of the DGFF on B $\{ \phi^V_u - \mathbbm{E}[\phi^V_u | \mathcal{F}_{\partial B}] \}_{u \in B}$.
\section{Tail estimates and tightness}\label{sec:tail_estimates}
The following analysis provides the necessary estimates to conclude \autoref{thm1}. 
\begin{lemma}\label{remark:assumption}
	There is a constant $\alpha_0>0$ such that for sufficiently large $N \in \mathbbm{N}$ and any $v,w \in V_N$, we have 
	\begin{align}\label{equation:bound_var_psi}
	\mathrm{Var}\left[\psi_v^N\right]\leq \log N \mathcal{I}_{\sigma^2}(1) + \alpha_0= \log N \sum_{i=1}^{M}\sigma_i^2 \Delta \lambda_{i} +\alpha_0,
	\end{align}
	and
	\begin{align}\label{equation:bound_cov_psi}
	\mathbbm{E}\left[(\psi_v^N-\psi_w^N)^2\right]\leq& 2 \log N\left[\mathcal{I}_{\sigma^2}(1)- \mathcal{I}_{\sigma^2}\left(\frac{n-\ceil*{ \log_{+}\|v-w\|_2}}{n}\right) \right] - \left|\mathrm{Var}\left[\psi_v^N\right]-\mathrm{Var}\left[\psi_w^N\right]\right| + 4 \alpha_0.
	\end{align}
	\begin{proof}
		Recall \autoref{definition:scale_inh_DGFF} and note that we have an underlying discrete Gaussian free field $\{\phi^N_v\}_{v\in V_N}$ such that $\psi^N_v=\sum_{i=1}^{M}\sigma_i \left(\phi^N_v(\lambda_i)-\phi^N_v(\lambda_{i-1}) \right)$, where $\phi^N_v(\lambda_i)-\phi^N_v(\lambda_{i-1})$ for $i=1,\dotsc, M$ are independent Gaussian free fields increments. A short computation shows that the variance of $\Delta \phi^N_v(\lambda_{i})$ is up to constants given by the difference of Green kernels on the boxes, that is $G_{[v]_{\lambda_{i}}}(v,v)-G_{[v]_{\lambda_{i-1}}}(v,v)$, for which we have a sufficient bound (see \cite[Lemma 3.10]{MR3526836}), and \eqref{equation:bound_var_psi} follows.\\
		For \eqref{equation:bound_cov_psi}, let $b_N(v,w)\coloneqq \max \left(\lambda\in [0,1]:\, [v]_\lambda \cap [w]_\lambda \neq \emptyset \right)$ be the branching scale for particles $v,w\in V_N$. For scales $\mu_i> \mu_i^\prime\geq b_N(v,w)$ and $i=1,2$, increments $\phi^N_v(\mu_1)-\phi^N_v(\mu_1^\prime)$ are independent of $\phi^N_w(\mu_2)-\phi^N_w(\mu_2^\prime)$. We define a set of representatives at scale $\lambda\in [0,1]$, denoted $R_\lambda$, such that it contains the centre of boxes that form a decomposition of $V_N$ into disjoint boxes with side length $N^{1-\lambda}$. Now, fix $v,w\in V_N$. There exists a set of representatives $R_\lambda$ at scale $\lambda= b_N(v,w) - \frac{4}{\log N}$, such that there is a common representative for $v$ and $w$, which we call $u_\lambda$.
		By \cite[Lemma A.6]{MR3541850}, there is a universal constant $C>0$ such that for $N$ large enough,
		\begin{align}\label{equation:5.5}
		\max_{u\in \{v,w\}} \mathbbm{E}\left[\left(\psi_u^N(\lambda)-\psi^N_{u_\lambda}(\lambda)\right)^2\right]\leq C,
		\end{align}
		We further note that increments of $v$ and $w$ beyond $b_N(v,w)$ are independent and that, by Cauchy-Schwarz,
		\begin{align}\label{equation:5.6}
			\mathbb{E}\left[\left(\psi^N_v(b_N(v,w))-\psi^N_v(\lambda)\right)\left(\psi^N_v(b_N(v,w)-\psi^N_v(\lambda))\right)\right]\leq \tilde{C}
		\end{align}
		as well as
		\begin{align}\label{equation:5.7}
			\max_{u\in \{v,w\}}\mathbb{E}\left[\left(\psi^N_u(b_N(v,w))-\psi^N_u(\lambda)\right)^2\right]\leq \tilde{C},
		\end{align}
		for some $\tilde{C}>0$. Thus, writing
		\begin{align}
			\psi^N_v-\psi^N_w=&\psi_v^N(\lambda)-\psi_{u_{\lambda}}^N(\lambda)+\psi_{u_{\lambda}}^N(\lambda)-\psi_w^N(\lambda)+ \psi^N_v(b_N)-\psi^N_v(\lambda)+\psi^N_w(b_N)-\psi^N_w(\lambda) +\psi_v^N-\psi_v^N(b_N)\nonumber\\
			&+\psi_w^N+\psi_w^N(b_N),
		\end{align}
		we can bound $\mathbb{E}\left[\left(\psi^N_v-\psi^N_w\right)^2\right]$ from above using \eqref{equation:5.5}, \eqref{equation:5.6}, \eqref{equation:5.7}, Green kernel estimates as for the first statement \eqref{equation:bound_var_psi}, as well as independence of increments beyond $b_N(v,w)$, which then implies the upper bound in \eqref{equation:bound_cov_psi}.
	\end{proof}
\end{lemma}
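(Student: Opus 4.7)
The plan is to exploit the orthogonal scale decomposition from Definition~\ref{definition:scale_inh_DGFF}: since $\psi^N_v=\sum_{i=1}^M\sigma_i\bigl(\phi^N_v(\lambda_i)-\phi^N_v(\lambda_{i-1})\bigr)$ and the increments $\Delta_i\coloneqq\phi^N_v(\lambda_i)-\phi^N_v(\lambda_{i-1})$ are independent by the Gibbs-Markov property recalled at the end of Section~\ref{sec:preparations}, we have $\mathrm{Var}(\psi^N_v)=\sum_{i=1}^M\sigma_i^2\,\mathrm{Var}(\Delta_i)$. Each $\mathrm{Var}(\Delta_i)$ is, up to a boundary correction, the difference of diagonal Green kernel values $G_{[v]_{\lambda_{i-1}}}(v,v)-G_{[v]_{\lambda_{i}}}(v,v)$, which by the standard expansion equals $(\lambda_i-\lambda_{i-1})\log N+O(1)$ uniformly in $v$ (cf.\ Lemma 3.10 in MR3526836 and the bound \eqref{equation:correlation_dgff}). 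Summing over $i=1,\dots,M$ produces $\log N\cdot\mathcal{I}_{\sigma^2}(1)$ plus at most $M$ $O(1)$ errors, which can be absorbed into a single $\alpha_0$ depending only on $\sigma$ and on $M$.

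\textbf{Plan for the second inequality.} I would start from the identity $\mathbb{E}[(\psi^N_v-\psi^N_w)^2]=\mathrm{Var}(\psi^N_v)+\mathrm{Var}(\psi^N_w)-2\,\mathrm{Cov}(\psi^N_v,\psi^N_w)$ and the algebraic relation $a+b=2\max(a,b)-|a-b|$. Using the first inequality to bound the maximum by $\log N\cdot\mathcal{I}_{\sigma^2}(1)+\alpha_0$, the first two terms contribute $2\log N\cdot\mathcal{I}_{\sigma^2}(1)-|\mathrm{Var}(\psi^N_v)-\mathrm{Var}(\psi^N_w)|+2\alpha_0$. It therefore remains to establish the matching lower bound on the covariance,
\begin{equation*}
\mathrm{Cov}(\psi^N_v,\psi^N_w)\;\geq\;\log N\cdot\mathcal{I}_{\sigma^2}\!\Bigl(\tfrac{n-\lceil\log_+\|v-w\|_2\rceil}{n}\Bigr)-\alpha_0.
\end{equation*}

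\textbf{Key step: the covariance bound.} I would introduce the branching scale $b_N(v,w)=\max\{\lambda\in[0,1]:[v]_\lambda\cap[w]_\lambda\neq\emptyset\}$, which satisfies $b_N(v,w)=\frac{n-\log_+\|v-w\|_2}{n}+O(1/\log N)$, and fix $\lambda=b_N(v,w)-4/\log N$ so that $v$ and $w$ share a common representative $u_\lambda$ at scale $\lambda$. Above $b_N$ the fields at $v$ and $w$ are conditioned on disjoint boxes and hence the corresponding increments are independent, contributing zero to the covariance. Below scale $\lambda$ the random variables $\psi^N_v(\lambda)$ and $\psi^N_w(\lambda)$ both differ from $\psi^N_{u_\lambda}(\lambda)$ only by a term of variance $O(1)$, by Lemma~A.6 in MR3541850 (as already quoted in \eqref{equation:5.5}). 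The remaining annular contribution between $\lambda$ and $b_N$ is $O(1)$ by Cauchy-Schwarz (cf.\ \eqref{equation:5.6}--\eqref{equation:5.7}). Hence $\mathrm{Cov}(\psi^N_v,\psi^N_w)=\mathrm{Var}(\psi^N_{u_\lambda}(\lambda))+O(1)$, and applying the first inequality to $\psi^N_{u_\lambda}(\lambda)$ (which is the $\lambda$-truncation of a scale-inhomogeneous DGFF) yields $\log N\cdot\mathcal{I}_{\sigma^2}(\lambda)+O(1)$. Replacing $\lambda$ by its defining expression and absorbing the constants into $\alpha_0$ gives the required bound.

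\textbf{Main obstacle.} The delicate point is to control the boundary corrections uniformly in $v,w\in V_N$: when either $v$ or $w$ lies within distance $o(N)$ of $\partial V_N$ the boxes $[v]_\lambda$ get truncated, which slightly degrades the Green kernel identity. This is precisely where Lemma~A.6 of MR3541850 is indispensable, since it furnishes an $O(1)$ estimate for the replacement of the field at $v$ (or $w$) by the field at the centre of its scale-$\lambda$ representative even in the presence of boundary effects. Combining this with the Green kernel bound, the annular Cauchy-Schwarz estimate, and the independence of increments beyond $b_N(v,w)$ yields three separate $O(1)$ errors, whose total is absorbed into the constant $4\alpha_0$ of the statement.
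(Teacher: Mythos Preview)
Your proof uses the same ingredients as the paper's: for \eqref{equation:bound_var_psi} the orthogonal increment decomposition plus the Green kernel estimate, and for \eqref{equation:bound_cov_psi} the branching scale $b_N(v,w)$, the common representative $u_\lambda$ at scale $\lambda=b_N(v,w)-4/\log N$, Lemma~A.6 of \cite{MR3541850}, the Cauchy--Schwarz bound on the annular piece, and independence beyond $b_N$. The only organisational difference is that the paper expands $(\psi_v^N-\psi_w^N)^2$ directly after writing $\psi_v^N-\psi_w^N$ as a six-term telescope through $u_\lambda$, whereas you pass through $\mathrm{Var}+\mathrm{Var}-2\,\mathrm{Cov}$ together with $a+b=2\max(a,b)-|a-b|$ and then prove a \emph{lower} bound on $\mathrm{Cov}(\psi_v^N,\psi_w^N)$; this repackaging is equivalent and has the merit of making the appearance of the $-|\mathrm{Var}[\psi_v^N]-\mathrm{Var}[\psi_w^N]|$ term completely transparent. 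One small slip to fix: at the end of your key step you write ``applying the first inequality to $\psi_{u_\lambda}^N(\lambda)$ yields $\log N\cdot\mathcal{I}_{\sigma^2}(\lambda)+O(1)$'', but \eqref{equation:bound_var_psi} is an \emph{upper} bound on the variance, while for the covariance lower bound you need the matching \emph{lower} bound $\mathrm{Var}[\psi_{u_\lambda}^N(\lambda)]\geq \log N\cdot\mathcal{I}_{\sigma^2}(\lambda)-O(1)$; this follows from the same two-sided Green kernel expansion (e.g.\ \cite[Lemma~A.2]{MR3541850} for a vertex at the centre of its box), so the argument goes through once you cite the correct direction.
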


We begin with an upper bound on the right tail.
\begin{prop}\label{proposition:up_bound_right_tail}
There is a constant $C=C(\alpha_0)$, independent of $N$ such that for all $N \in \mathbbm{N}$ and $x>0$,
\begin{align}
	\mathbbm{P}\left(\max_{v \in V_N}\psi_v^N\geq m_N+x\right)\leq C(1+x\mathbbm{1}_{\sigma_1=\bar{\sigma}_1}) e^{-x \frac{2}{\bar{\sigma}_1}}.
\end{align}

\end{prop}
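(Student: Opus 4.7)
The plan is to bound the right tail of $\max_v\psi^N_v$ by the right tail of a less correlated auxiliary process, namely the time-inhomogeneous branching random walk (IBRW) $R^N$ from \autoref{definition:ibrw}, and then to invoke the known right-tail estimate for the IBRW maximum. I would take $\tilde\sigma\equiv\sigma$, so that $R^N$ carries the scale-inhomogeneous DGFF variance profile but arranged along the dyadic hierarchy rather than the physical lattice.

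The key covariance inequality is as follows. For $v,w\in V_N$, let $k_{v,w}$ be the smallest scale at which $v,w$ lie in a common box of $\mathcal{BD}_{k_{v,w}}$; then $k_{v,w}\geq \log_{+}\|v-w\|_\infty$, and
\[\mathrm{Cov}(R^N_v,R^N_w)=\log 2\sum_{k=k_{v,w}}^{n}\sigma^2\!\left(\tfrac{n-k}{n}\right)\approx n\log 2\,\bigl[\mathcal{I}_{\sigma^2}(1)-\mathcal{I}_{\sigma^2}(k_{v,w}/n)\bigr],\]
which by monotonicity of $\mathcal{I}_{\sigma^2}$ is at most $n\log 2\,\mathcal{I}_{\sigma^2}\!\bigl((n-\log_+\|v-w\|_\infty)/n\bigr)$, agreeing with $\mathbb{E}[\psi^N_v\psi^N_w]$ up to an additive constant via the covariance formula in Section 1.2 and \autoref{lemma:cov_comp}(iii). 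Boundary effects are handled by embedding into $V_{4N}$ as in \autoref{lemma:cov_comp}(iii)--(iv). After matching the variances up to an $O(1)$ error (using \eqref{equation:bound_var_psi} and an auxiliary independent Gaussian summand if needed), Slepian's inequality from \autoref{appendix:A} yields
\[\mathbb{P}\bigl(\max_{v\in V_N}\psi^N_v\geq t\bigr)\leq \mathbb{P}\bigl(\max_{z\in V_N}R^N_z\geq t-O(1)\bigr)\]
for any $t\in\mathbb{R}$.

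The conclusion then follows from the known right-tail estimate for the time-inhomogeneous BRW maximum \cite{MR2968674,MR3351476,MR3361256}: the max of $R^N$ concentrates at $m_N$ (with the concave-hull leading order $2\log N\,\mathcal{I}_{\bar\sigma}(1)$ and the case-dependent logarithmic correction through $w_j$), and has exponential right-tail decay at rate $2/\bar\sigma_1$, i.e.\ the critical slope of the top phase, with a linear prefactor $(1+x)$ precisely when $\sigma_1=\bar\sigma_1$, i.e.\ when the top phase is critical in the BBM sense. The main obstacle I anticipate is the bookkeeping of constants in the covariance comparison—boundary effects, variance discrepancies, and ensuring the IBRW tail estimate is available with the exact logarithmic correction $m_N$ that the assumption in \autoref{thm1} dictates; once these are absorbed into the constant $C(\alpha_0)$ using \autoref{remark:assumption}, and for $x\gtrsim\sqrt{\log N}$ the Borell inequality mentioned after \autoref{thm1} already provides a stronger super-exponential bound, the IBRW comparison is only needed in the moderate range $0<x\leq\sqrt{\log N}$.
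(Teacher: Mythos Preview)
Your overall strategy---Gaussian comparison with an IBRW followed by Mallein's tail bound---is exactly the paper's route, but the Slepian step as you describe it has a genuine gap. Slepian's lemma requires \emph{exact} inequalities: equal variances and a pointwise covariance ordering. Your computation only gives $\mathrm{Cov}(R^N_v,R^N_w)\le \mathrm{Cov}(\psi^N_v,\psi^N_w)+O(1)$, and the variances agree only up to $\alpha_0$; these $O(1)$ defects do \emph{not} automatically translate into a threshold shift, so the displayed conclusion $\mathbb{P}(\max\psi\ge t)\le \mathbb{P}(\max R\ge t-O(1))$ is unjustified. Adding an independent Gaussian to one side fixes at most one of the two defects while potentially worsening the other: to make $a_v^2\ge 0$ in the variance-matching you need $\mathrm{Var}[\psi^N_v]\le \mathrm{Var}[R^N_v]$, which can fail by $\alpha_0$; and absorbing the covariance slack by adding a common Gaussian to $\psi$ pushes the variance the wrong way.

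The paper resolves this with two devices you should incorporate. First (\autoref{lemma:5.2}), it compares $\psi^N$ not to $R^N$ but to $R^{2^\kappa N}$ restricted to $2^\kappa V_N$, with $\kappa=\kappa(\alpha_0)$ fixed but large: the extra $\kappa$ levels give the IBRW $\log(2)\,\kappa$ additional variance, making $a_v^2=\log(2)\,\mathrm{Var}[R^{2^\kappa N}_{2^\kappa v}]-\mathrm{Var}[\psi^N_v]\ge 0$. Second, and this is what your ``bookkeeping'' remark underestimates, the paper does \emph{not} take $\tilde\sigma=\sigma$. On the stretched horizon $n+\kappa$ the required pointwise inequality is $(n+\kappa)\,\mathcal{I}_{\tilde\sigma^2}\bigl(\tfrac{n-x}{n+\kappa}\bigr)\le n\,\mathcal{I}_{\sigma^2}\bigl(\tfrac{n-x}{n}\bigr)$ for all $x\in[0,n]$, and this fails for generic $\sigma$ when $\tilde\sigma=\sigma$ (it amounts to a monotonicity of $s\mapsto \mathcal{I}_{\sigma^2}(s)/s$ that is not assumed). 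Instead $\tilde\sigma$ is constructed in \eqref{eq:tilde_sigma} so that $\mathcal{I}_{\tilde\sigma^2}$ sits below $\mathcal{I}_{\sigma^2}$ in the right rescaled sense, while keeping $\mathcal{I}_{\tilde\sigma^2}(1)=1$ and first effective variance $\bar\sigma_1$; the latter ensures Mallein's bound \cite[Theorem~4.1]{MR3361256} still yields the rate $2/\bar\sigma_1$ and the prefactor $(1+x\mathbbm{1}_{\sigma_1=\bar\sigma_1})$.
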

The principal idea to prove \autoref{proposition:up_bound_right_tail} is to use Gaussian comparison and compare the maximum of the scale-inhomogeneous DGFF to the maximum of suitable inhomogeneous branchig random walk. To obtain the correct upper bound we need to choose the variance of the IBRW appropriately.
Here, we need to distinguish two cases. If there exists exactly one effective variance parameter, then we choose $s\mapsto \tilde{\sigma}(s)$, such that $s\mapsto\mathcal{I}_{\tilde{\sigma}^2}(s)$ is the lower convex envelope of the function $s\mapsto \mathcal{I}_{\sigma^2}(s)$.
Else, if there are at least two effective scale parameter,s we introduce a parameter $0<\kappa\ll n$. We set $\sigma_{min}=\min_{1\leq i \leq M} \sigma_i$ and $\sigma_{max}=\max_{1\leq i \leq M} \sigma_i$. We pick $\tilde{\lambda}^1\equiv\tilde{\lambda}^1(\kappa)=\lambda^1 \frac{n}{n+k}$ as first effective scale and as first effective variance, $\bar{\sigma}_1$. Next, we set $\tilde{\lambda}_1=\tilde{\lambda}_1(\kappa)=\tilde{\lambda}^1 \frac{\sigma_{max}^2-\bar{\sigma}^1}{\sigma_{max}^2-\sigma_{min}^2}$, $\tilde{\lambda}_2=\tilde{\lambda}^1$ and $\tilde{\lambda}_3=\frac{\tilde{\lambda}^1\left(\bar{\sigma}_1- \sigma_{min}^2\right)+\left(\sigma_{max}^2-1\right)}{\sigma_{max}^2-\sigma_{min}^2}.$ For $s\in [0,1]$, we define the variance function as follows:
\begin{align}\label{eq:tilde_sigma}
	\tilde{\sigma}(s)=\left(\sigma_{min} \mathbbm{1}_{s\in [0,\tilde{\lambda}_1)} +\sigma_{max} \mathbbm{1}_{s\in [\tilde{\lambda}_1,\tilde{\lambda}_2)}\right)\mathbbm{1}_{\sigma_1\neq\bar{\sigma}_1} + \bar{\sigma}_1 \mathbbm{1}_{\sigma_1=\bar{\sigma}_1} +\sigma_{min} \mathbbm{1}_{s\in [\tilde{\lambda}_2,\tilde{\lambda}_3)} +\sigma_{max}^2 \mathbbm{1}_{s\in [\tilde{\lambda}_3,1]}.
\end{align}
In both cases our choice ensures that the first effective variances coincide, that $(n+\kappa)\mathcal{I}_{\tilde{\sigma}^2}\left(\frac{n-x}{n+\kappa}\right)\leq n\mathcal{I}_{\sigma^2}\left(\frac{n-x}{n}\right)$, for $x\in [0,n]$ and such that $\mathcal{I}_{\tilde{\sigma}^2}(1)=1$.
Before proving \autoref{proposition:up_bound_right_tail}, we need one more lemma.
\begin{lemma}\label{lemma:5.2}
	There is an integer $\kappa=\kappa(\alpha_0)>0$ such that for all $N \in \mathbbm{N}$, $\lambda \in \mathbbm{R}$ and $A \subset V_N$,
	\begin{align}
		\mathbbm{P}\left(\max_{v \in A}\psi^N_v\geq \lambda\right)\leq 2\mathbbm{P}\left(\max_{v \in 2^\kappa A}R_{v}^{2^{\kappa}N}\geq \lambda\right).
	\end{align}
\end{lemma}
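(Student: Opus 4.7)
The plan is to apply a Slepian-type comparison between $\psi^N$ on $A$ and the IBRW $R^{2^\kappa N}$ on $2^\kappa A$. Since the two fields do not have matching variances, I will enlarge $\psi^N$ by an independent Gaussian noise chosen so that the variances agree and the covariance inequality required by Slepian's lemma still holds. The factor $2$ in the statement then enters through an anticoncentration step that removes the noise.

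First, I will compute the covariance of the IBRW. Since $\mathcal{BD}_k(z)$ contains exactly one element,
\begin{equation*}
\mathbb{E}\!\left[R_{2^\kappa v}^{2^\kappa N}R_{2^\kappa w}^{2^\kappa N}\right]=\log 2\sum_{k=k^\star}^{n+\kappa}\tilde{\sigma}^2\!\left(\tfrac{n+\kappa-k}{n+\kappa}\right),
\end{equation*}
where $k^\star$ is the smallest level at which $2^\kappa v$ and $2^\kappa w$ lie in the same $\mathcal{BD}$-block; since $\|2^\kappa v-2^\kappa w\|_\infty=2^\kappa\|v-w\|_\infty$, we have $k^\star=\kappa+\log_{+}d^N(v,w)+O(1)$. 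As $\tilde{\sigma}$ is a bounded step function, up to an $O(1)$ error the sum equals $(n+\kappa)\log 2\cdot\mathcal{I}_{\tilde{\sigma}^2}\!\left((n-\log_{+}d^N(v,w))/(n+\kappa)\right)$, and the diagonal value is exactly $(n+\kappa)\log 2$. Applying the defining property $(n+\kappa)\mathcal{I}_{\tilde{\sigma}^2}((n-x)/(n+\kappa))\leq n\mathcal{I}_{\sigma^2}((n-x)/n)$ and comparing with the covariance of $\psi^N$ via \autoref{lemma:cov_comp}(iv) and \autoref{remark:assumption}, I obtain a universal constant $c_1$ such that for all $v\neq w$,
\begin{equation*}
\mathbb{E}\!\left[R_{2^\kappa v}^{2^\kappa N}R_{2^\kappa w}^{2^\kappa N}\right]\leq\mathbb{E}\!\left[\psi_v^N\psi_w^N\right]+c_1,\qquad \mathrm{Var}\!\left[R_{2^\kappa v}^{2^\kappa N}\right]-\mathrm{Var}\!\left[\psi_v^N\right]\geq\kappa\log 2-\alpha_0.
\end{equation*}

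I then \emph{fix} $\kappa=\kappa(\alpha_0)$ large enough that $\kappa\log 2\geq\alpha_0+c_1$, so that $\mathrm{Var}[R_{2^\kappa v}^{2^\kappa N}]\geq\mathrm{Var}[\psi_v^N]+c_1$ for every $v$. Introduce $Z_0\sim\mathcal{N}(0,c_1)$ and, jointly independent of $\psi^N$ and of each other, centered Gaussians $\{\widetilde Z_v\}_{v\in A}$ with $\mathrm{Var}[\widetilde Z_v]=\mathrm{Var}[R_{2^\kappa v}^{2^\kappa N}]-\mathrm{Var}[\psi_v^N]-c_1\geq 0$, and set $\Psi_v\coloneqq\psi_v^N+Z_0+\widetilde Z_v$. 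By construction $\mathrm{Var}[\Psi_v]=\mathrm{Var}[R_{2^\kappa v}^{2^\kappa N}]$, and for $v\neq w$, $\mathbb{E}[\Psi_v\Psi_w]=\mathbb{E}[\psi_v^N\psi_w^N]+c_1\geq\mathbb{E}[R_{2^\kappa v}^{2^\kappa N}R_{2^\kappa w}^{2^\kappa N}]$. Slepian's lemma (cited in \autoref{appendix:A}) then yields
\begin{equation*}
\mathbb{P}\!\left(\max_{v\in A}\Psi_v\geq\lambda\right)\leq\mathbb{P}\!\left(\max_{v\in 2^\kappa A}R_v^{2^\kappa N}\geq\lambda\right).
\end{equation*}

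To conclude, let $\hat v$ be the (random) argmax of $\psi^N$ over $A$. Conditional on $\psi^N$, the variable $Z_0+\widetilde Z_{\hat v}$ is a centered Gaussian, hence $\mathbb{P}(Z_0+\widetilde Z_{\hat v}\geq 0\mid\psi^N)=\tfrac12$, so
\begin{equation*}
\mathbb{P}\!\left(\max_{v\in A}\Psi_v\geq\lambda\right)\geq\mathbb{P}\!\left(\psi_{\hat v}^N\geq\lambda,\,Z_0+\widetilde Z_{\hat v}\geq 0\right)=\tfrac12\,\mathbb{P}\!\left(\max_{v\in A}\psi_v^N\geq\lambda\right),
\end{equation*}
which, combined with the Slepian bound, gives the lemma. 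The main obstacle is the quantitative bookkeeping in the first step: the additive $O(1)$ discrepancies arising from the Riemann-sum error in the IBRW covariance, the $\alpha_0$ slack of \autoref{remark:assumption}, and the constants of \autoref{lemma:cov_comp} must all be uniformly dominated by the extra $\kappa\log 2$ contributed by the $\kappa$ additional hierarchical levels of $R^{2^\kappa N}$. Since these constants depend only on the variance profile $\sigma$, this is achievable by fixing $\kappa$ as a function of $\alpha_0$ alone, as stated.
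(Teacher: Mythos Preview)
Your Slepian strategy is the right one, and the final ``remove the noise by symmetry'' step is fine. The gap is in the covariance comparison. You claim a universal constant $c_1$ with
\[
\mathbb{E}\bigl[R_{2^\kappa v}^{2^\kappa N}R_{2^\kappa w}^{2^\kappa N}\bigr]\le \mathbb{E}\bigl[\psi_v^N\psi_w^N\bigr]+c_1\qquad\text{for all }v\neq w\in V_N,
\]
but this requires a \emph{lower} bound on $\mathbb{E}[\psi_v^N\psi_w^N]$ that is uniform over $V_N$, and neither \autoref{lemma:cov_comp}$\,iv.$ (which is stated only for points in the bulk of a larger box) nor \autoref{remark:assumption} (which gives only upper bounds) supplies one. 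In fact the inequality is false near $\partial V_N$: if $v,w$ are adjacent and both at distance $O(1)$ from the boundary, then $\mathbb{E}[\psi_v^N\psi_w^N]=O(1)$, whereas $2^\kappa v,2^\kappa w$ sit well inside $V_{2^\kappa N}$ and share dyadic boxes from level $\approx\kappa$ upward, so $\mathbb{E}[R_{2^\kappa v}^{2^\kappa N}R_{2^\kappa w}^{2^\kappa N}]$ is of order $\log N$. No finite $c_1$ can absorb this, and since your $\widetilde Z_v$ are independent across sites they add nothing to the off-diagonal covariance of $\Psi$.

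The paper's construction sidesteps this by using a \emph{single} common Gaussian $X$ with site-dependent coefficients $a_v=(\mathrm{Var}[R_{2^\kappa v}^{2^\kappa N}]-\mathrm{Var}[\psi_v^N])^{1/2}$. The point is that $a_v$ becomes large precisely where $\mathrm{Var}[\psi_v^N]$ is small (near the boundary), and the cross term $a_v a_w$ in $\mathbb{E}[(\psi_v^N+a_vX)(\psi_w^N+a_wX)]$ then supplies exactly the missing covariance. Equivalently, the paper works with the increment form: $(a_v-a_w)^2\le|a_v^2-a_w^2|=|\mathrm{Var}[\psi_v^N]-\mathrm{Var}[\psi_w^N]|$ cancels against the corresponding term in \eqref{equation:bound_cov_psi}, giving a clean upper bound on $\mathbb{E}[(\psi_v^N+a_vX-\psi_w^N-a_wX)^2]$ that can be matched against the IBRW increments by choosing $\kappa$ large. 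Your proof is repaired by replacing the independent $\widetilde Z_v$ with $a_v X$ for a single $X$; the factor $2$ then comes from $\mathbb{P}(X\ge 0)=\tfrac12$ and $a_v\ge 0$.
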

\begin{proof}
	By \autoref{remark:assumption}, we can choose a sufficiently large constant $\kappa$ that depends only on $\alpha_0$, such that $\mathrm{Var}\left[\psi_v^N\right]\leq \log(2) \mathrm{Var}\left[R_{2^{\kappa}v}^{2^{\kappa}N}\right]$ for all $v \in V_N$. Thus,
	\begin{align}
		a_v^2\coloneqq \log(2)\mathrm{Var}\left[R_{2^{\kappa}v}^{2^{\kappa}N}\right] -\mathrm{Var}\left[\psi_v^N\right]
	\end{align}
	are non-negative. Let $X$ be a standard Gaussian. Since $\mathrm{Var}\left[R^N_v \right]=\mathrm{Var}\left[R^N_w\right],$ for all $v,w\in V_N$, we get
	\begin{align}
		\mathbbm{E}\left[(\psi_v^N+a_v X- \psi_w^N- a_w X)^2\right]&= \mathbbm{E}\left[(\psi_v^N-\psi_w^N)^2\right] + (a_v-a_w)^2 \nonumber \\ 
		& = \mathbbm{E}\left[(\psi_v^N-\psi_w^N)^2\right] + \left|\mathrm{Var}\left[\psi_v^N\right]-\mathrm{Var}\left[\psi_w^N\right]\right|\nonumber\\ 
		&\leq 2 \log (N)\left[1-  \mathcal{I}_{\sigma^2}\left(\frac{n-\ceil*{\log_{+}\|v-w\|_2}}{n}\right)\right] +4 \alpha_0, \label{equation:lb_8.10}
	\end{align}
	by \autoref{lemma:cov_comp}.
On the other hand by our choice of $\tilde{\sigma}$ in \eqref{eq:tilde_sigma}, $\mathrm{Var}\left[R^{2^{\kappa}N}_{2^{\kappa}v}\right]=\log(N)+\log (2)\kappa$ grows linearly in $\kappa$, whereas $\mathbbm{E}[R^{2^{\kappa}N}_{2^{\kappa}u}R^{2^{\kappa}N}_{2^{\kappa}v}]=\left(\log(N)+\log (2)\kappa\right)\mathcal{I}_{\tilde{\sigma}^2}\left(\frac{n-\log_{+} d^N(u,v)}{n+\kappa}\right)$. By our choice in \eqref{eq:tilde_sigma} and taking into account that for two vertices $u$ and $v$, $\log_{+}d^N(u,v)\geq \log_{+}\|u-v\|_2$,
	\begin{align}\label{equation:5.11}
		\mathbbm{E}\left[(R_{2^{\kappa}v}^{2^{\kappa}N}-R_{2^{\kappa}w}^{2^{\kappa}N})^2\right] &\geq 2 \left(\log(N)+\log (2)\kappa\right)\left[1-\mathcal{I}_{\tilde{\sigma}^2}\left(\frac{n-\ceil*{\log_{+}\|v-w\|_2}}{n+\kappa} \right)\right].
	\end{align}
	Combining \eqref{equation:5.11} with the upper bound in \eqref{equation:lb_8.10}, it follows that we may choose $\kappa(\alpha_0)$ such that for all $v,w \in V_N$,
	\begin{align}
		\mathbbm{E}\left[(\psi_v^N-\psi_w^N)^2\right]\leq \mathbbm{E}\left[(\psi_v^N+a_v X- \psi_w^N- a_w X)^2\right]\leq \mathbbm{E}\left[(R_{2^{\kappa}v}^{2^{\kappa}N}-R_{2^{\kappa}w}^{2^{\kappa}N})^2\right].
	\end{align}
	Applying Slepian's Lemma, we obtain for any $\lambda \in \mathbbm{R}_{+}$ and $A \subset V_N$,
	\begin{align}
		\mathbbm{P}\left(\max_{v \in A}\psi_v^N +a_v X\geq \lambda\right)\leq \mathbbm{P}\left(\max_{v \in 2^{\kappa}A}R_{v}^{2^{\kappa}N}\geq \lambda\right).
	\end{align}
	By independence and symmetry of $X$,
	\begin{align}
			\mathbbm{P}\left(\max_{v \in A}\psi_v^N\geq \lambda\right)\leq 2 \mathbbm{P}\left(\max_{v \in 2^{\kappa}A}R_{v}^{2^{\kappa}N}\geq \lambda\right).
	\end{align}
\end{proof}
\begin{proof}[Proof of \autoref{proposition:up_bound_right_tail}]
	\cite[Theorem~4.1]{MR3361256} gives us
	\begin{align}\label{eq:4.16}
	&\mathbbm{P}\left(\max_{v \in V_N}R_v^N\geq m_N+x\right)\leq C(1+x \mathbbm{1}_{\sigma_1=\bar{\sigma}_1})e^{-x \frac{2}{\bar{\sigma}_1}}, \quad\forall x\geq 0.
	\end{align}
	The claim follows from a combination of \autoref{lemma:5.2} and \eqref{eq:4.16}.
\end{proof}
Next, we prove a corresponding lower bound on the right tail.
\begin{lemma}\label{lemma:5.3}
	There is an integer $\kappa>0$ such that for all $N\in \mathbbm{N}$ and $\lambda \in \mathbbm{R}$,
	\begin{align}
		\frac{1}{2}\mathbbm{P}\left(\max_{v \in V_{2^{-\kappa}N}}\sqrt{\log(2)}S_v^{2^{-\kappa}N}\geq \lambda\right)\leq \mathbbm{P}\left(\max_{v \in V_N}\psi_v^N\geq \lambda\right).
	\end{align}
\end{lemma}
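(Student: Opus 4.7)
The proof parallels \autoref{lemma:5.2} with the direction of Gaussian comparison reversed: instead of augmenting $\psi^N$ to lie above an IBRW, the plan is to augment the MIBRW by an independent Gaussian perturbation so that, after matching variances, it becomes \emph{more} correlated than the DGFF restricted to a centred sub-box. First I would fix an integer $\kappa\ge 2$ (to be chosen large), set $N':=2^{-\kappa}N$, and pick a translation $\pi:V_{N'}\hookrightarrow V_N$ placing $\pi(V_{N'})$ at the centre of a $V_{4N'}$-sub-box of $V_N$. Since $\max_{u\in V_N}\psi_u^N\ge\max_{v\in V_{N'}}\psi_{\pi(v)}^N$, it suffices to establish $\tfrac12\mathbb{P}(\max_{V_{N'}}\sqrt{\log 2}\,S_v^{N'}\ge\lambda)\le\mathbb{P}(\max_{V_{N'}}\psi_{\pi(v)}^N\ge\lambda)$.

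The core step is a covariance comparison: for $\kappa$ sufficiently large and all $v,w\in V_{N'}$,
\begin{align*}
\mathbb{E}\bigl[\psi_{\pi(v)}^N\psi_{\pi(w)}^N\bigr]-\log(2)\,\mathbb{E}\bigl[S_v^{N'} S_w^{N'}\bigr]\;\ge\;\sqrt{\bigl(\mathrm{Var}[\psi_{\pi(v)}^N]-\log(2)\mathrm{Var}[S_v^{N'}]\bigr)\bigl(\mathrm{Var}[\psi_{\pi(w)}^N]-\log(2)\mathrm{Var}[S_w^{N'}]\bigr)}\;\ge\;0.
\end{align*}
I would prove this via the Gibbs--Markov decomposition $\psi_{\pi(v)}^N=\tilde\psi_v+h_v$, with $\tilde\psi$ an independent $(\sigma,\lambda)$-DGFF on the central $V_{4N'}$-sub-box and $h$ the harmonic part measurable with respect to $\psi^N$ outside that sub-box. \autoref{lemma:cov_comp}(iv), together with shift-invariance of the MIBRW, yields $|\mathbb{E}[\tilde\psi_v\tilde\psi_w]-\log(2)\mathbb{E}[S_v^{N'} S_w^{N'}]|\le C$ uniformly in $v,w\in V_{N'}$. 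Standard Green-kernel / harmonic-measure estimates (of the same flavour as those underpinning \autoref{remark:assumption}) give $\mathrm{Var}[h_v]=(\kappa-2)\log 2+O(1)$ and $\mathrm{Var}[h_v-h_w]\le C'$ uniformly in $v,w\in V_{N'}$, so that $\mathrm{Cov}[h_v,h_w]=\mathrm{Var}[h_v]-\tfrac12\mathrm{Var}[h_v-h_w]\ge(\kappa-2)\log 2-C''$ dominates the $O(1)$ errors from (iv) for $\kappa$ large. Summing these contributions (noting $\tilde\psi\perp h$) recovers the displayed inequalities for both the variance and covariance differences.

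Given the comparison, I would let $X$ be a standard Gaussian independent of everything else, set $a_v:=\bigl(\mathrm{Var}[\psi_{\pi(v)}^N]-\log(2)\mathrm{Var}[S_v^{N'}]\bigr)^{1/2}\ge 0$, and define $\tilde S_v:=\sqrt{\log 2}\,S_v^{N'}+a_v X$. By construction $\mathrm{Var}[\tilde S_v]=\mathrm{Var}[\psi_{\pi(v)}^N]$, and the covariance comparison gives $\mathrm{Cov}[\tilde S_v,\tilde S_w]=\log(2)\mathbb{E}[S_v^{N'} S_w^{N'}]+a_v a_w\le\mathbb{E}[\psi_{\pi(v)}^N\psi_{\pi(w)}^N]$, whence $\mathbb{E}[(\tilde S_v-\tilde S_w)^2]\le\mathbb{E}[(\psi_{\pi(v)}^N-\psi_{\pi(w)}^N)^2]$. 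Slepian's lemma (from Appendix~A) then yields $\mathbb{P}(\max_v\tilde S_v\ge\lambda)\le\mathbb{P}(\max_v\psi_{\pi(v)}^N\ge\lambda)$. Since $a_v\ge 0$ and $X$ is symmetric and independent of $\{S_v^{N'}\}$, on $\{X\ge 0\}$ one has $\tilde S_v\ge\sqrt{\log 2}\,S_v^{N'}$ pointwise, so
\begin{align*}
\mathbb{P}\bigl(\max_v\sqrt{\log 2}\,S_v^{N'}\ge\lambda\bigr)\;\le\;2\,\mathbb{P}\bigl(\max_v\tilde S_v\ge\lambda\bigr)\;\le\;2\,\mathbb{P}\bigl(\max_{V_N}\psi_v^N\ge\lambda\bigr).
\end{align*}

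The main obstacle is the covariance comparison: \autoref{lemma:cov_comp}(iv) on its own only controls covariances up to an $O(1)$ error of uncontrolled sign, and thus is insufficient to guarantee either $a_v^2\ge 0$ or the required covariance inequality uniformly. The Gibbs--Markov harmonic part supplies a buffer of size $(\kappa-2)\log 2$ that absorbs these $O(1)$ errors once $\kappa$ is chosen large enough; the one genuinely nontrivial check is that this harmonic part has within-box variance-of-increments only $O(1)$ uniformly over $v,w\in V_{N'}$, which, however, follows from exactly the same Green-kernel estimates already invoked in the proofs of \autoref{remark:assumption} and \autoref{lemma:cov_comp}.
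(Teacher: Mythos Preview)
Your argument has a genuine sign error that breaks the Slepian step. From your displayed inequality $\mathrm{Cov}[\psi_{\pi(v)}^N,\psi_{\pi(w)}^N]-\log(2)\,\mathrm{Cov}[S_v^{N'},S_w^{N'}]\ge a_va_w$ you correctly get $\mathrm{Cov}[\tilde S_v,\tilde S_w]\le\mathrm{Cov}[\psi_{\pi(v)}^N,\psi_{\pi(w)}^N]$, but with equal variances this gives $\mathbb{E}[(\tilde S_v-\tilde S_w)^2]\ge\mathbb{E}[(\psi_{\pi(v)}^N-\psi_{\pi(w)}^N)^2]$, not $\le$ as you write. Slepian then yields $\mathbb{P}(\max\tilde S\ge\lambda)\ge\mathbb{P}(\max\psi\ge\lambda)$, the wrong direction. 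What you actually need is the \emph{opposite} covariance inequality, $\mathrm{Cov}[\psi]-\log(2)\mathrm{Cov}[S]\le a_va_w$, but your harmonic-part heuristic argues precisely for the reverse: it shows the extra covariance in $\psi$ coming from $h$ is large, not small. Both sides of the comparison are of order $\kappa\log 2+O(1)$ with uncontrolled signs on the $O(1)$'s, so neither inequality follows from your buffer argument.

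A second issue is that the scale-inhomogeneous DGFF does \emph{not} enjoy the Gibbs--Markov decomposition you invoke: the boxes $[v]_{\lambda_i}$ have side $N^{1-\lambda_i}$, which does not match $(4N')^{1-\lambda_i}$, so $\psi^N|_{\pi(V_{N'})}$ is not a $(\sigma,\lambda)$-DGFF on $V_{4N'}$ plus a harmonic binding field; the inner field has shifted scale parameters, and the ``harmonic'' remainder is $\sigma_1 H_v$ (which even vanishes if $\sigma_1=0$). The paper avoids both problems by using a \emph{scaled} embedding $v\mapsto(\tfrac{N}{4},\tfrac{N}{4})+2^{\kappa-3}v$ rather than a translation. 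The dilation by $2^{\kappa-3}$ increases distances between image points, which lowers the covariances of $\psi^N$ there; combined directly with \autoref{lemma:cov_comp}~ii.\ and iv.\ (no Gibbs--Markov for $\psi$ needed), one gets $\mathbb{E}[(\psi_{\tilde u}^N-\psi_{\tilde v}^N)^2]\ge\log(2)\,\mathbb{E}[(S_u^{N'}-S_v^{N'})^2]+O(1)$, and since $|a_u-a_v|\le\sqrt{C}$ the $(a_u-a_v)^2$ contribution is itself only $O(1)$ and is absorbed by taking $\kappa$ large. That is the mechanism you are missing: the $\kappa$ slack must enter through the \emph{increments}, not through a common additive shift in all covariances.
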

\begin{proof}
	Note that $(\frac{N}{4},\frac{N}{4})+2^{\kappa-3}V_{2^{-\kappa}N}\subset (\frac{N}{4},\frac{N}{4})+ V_{\frac{N}{8}} \subset V_N$. By \autoref{lemma:cov_comp} $ii.$ and $iv.$, there is a constant $C>0$, independent of $N$, such that
	\begin{align}
		&\left|\mathrm{Var}\left[\psi_{(\frac{N}{4},\frac{N}{4})+2^{\kappa-3}u}^N\right]-\mathrm{Var}\left[\psi_{(\frac{N}{4},\frac{N}{4})+2^{\kappa-3}v}^N\right]\right|\leq C, \quad \forall u,v\in V_{2^{-\kappa}N}.
	\end{align}
	Moreover, by $iv.$ in \autoref{lemma:cov_comp}
	\begin{align}
		&\mathrm{Var}\left[\psi_{(\frac{N}{4},\frac{N}{4})+2^{\kappa-3}v}^N\right]\geq \log(2)\mathrm{Var}\left[S_v^{2^{-\kappa}N}\right], \quad \forall v\in V_{2^{-\kappa}N},
	\end{align}
	for $\kappa>0$ large enough, independent of $N$. Thus, we can find a family of positive real numbers $\{a_v:v\in V_{2^{-\kappa}N}\}$ that satisfy $|a_u-a_v|\leq \sqrt{C}$ for a constant $C>0$, such that for $u,v\in V_N$ and an independent standard Gaussian random variable $X$,
	\begin{align}
		&\mathrm{Var}\left[\psi_{(\frac{N}{4},\frac{N}{4})+2^{\kappa-3}v}^N\right]=\log(2)\mathrm{Var}\left[S_v^{2^{-\kappa}N}+a_v X\right], \quad \forall v \in V_{2^{-\kappa}N}. \label{equation:8.22}
	\end{align}
	Using \autoref{lemma:cov_comp} $iv.$, and choosing $\kappa$ large enough, we have for $u,v\in V_{2^{-\kappa}N}$,
	\begin{align}\label{equation:8.19}
		\mathbbm{E}\left[(\psi_{(\frac{N}{4},\frac{N}{4})+2^{\kappa-3}u}^N-\psi_{(\frac{N}{4},\frac{N}{4})+2^{\kappa-3}v}^N)^2\right]\geq \log(2)\mathbbm{E}\left[(S_u^{2^{-\kappa}N}- S_v^{2^{-\kappa}N}+(a_u-a_v) X)^2\right].
	\end{align}
	Hence, by Slepian's Lemma we have for any $\lambda \in \mathbbm{R}$,
	\begin{align}
		\mathbbm{P}\left(\max_{v \in V_{2^{-\kappa}N}}\psi_{(\frac{N}{4},\frac{N}{4})+2^{\kappa-3}v}^N\geq \lambda\right)&\geq \mathbbm{P}\left(\sqrt{\log(2)}\max_{v \in V_{2^{-\kappa}N}}(S_v^{2^{-\kappa}N}+a_v X)\geq \lambda\right)\nonumber\\ &\geq \frac{1}{2} \mathbbm{P}\left(\sqrt{\log(2)}\max_{v \in V_{2^{-\kappa}N}}S_v^{2^{-\kappa}N} \geq \lambda\right),
	\end{align}
	as $X$ is an independent standard Gaussian.
\end{proof}
\begin{lemma}\label{lemma:right_tail_lb_MIBRW}
	Set $M_N^{*}\coloneqq m_N/\sqrt{\log(2)}$.
	There is a constant $C>0$ such that for any $N \in \mathbbm{N}$ and $y \in [0,\sqrt{\log N}]$,
	\begin{align}
		\mathbbm{P}\left(\max_{v \in V_N}S_v^N >M_N^{*} +y\right)\geq C\left(1+y\mathbbm{1}_{\sigma_1=\bar{\sigma}_1}\right)e^{- \frac{2\sqrt{\log(2)}}{\bar{\sigma}_1}y}.
	\end{align}
\end{lemma}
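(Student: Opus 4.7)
The plan is to establish the lower bound via a modified second moment method (Paley--Zygmund inequality), applied to a carefully chosen counting random variable that encodes both the extremal condition and a barrier (entropic repulsion) condition matching the effective variance profile.

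First I would restrict attention to a well-separated subset $A \subset V_N$, e.g. $A = (\sfrac{N}{4}, \sfrac{N}{4}) + V_{\sfrac{N}{8}}$, so that $d^N(v,w) = \|v-w\|_2$ for $v,w \in A$ and the covariance estimates of \autoref{lemma:cov_comp} apply cleanly. For each $v \in A$ and each $t \in \{0,1,\ldots,n\}$, denote by $S_v^N(t)$ the partial MIBRW sum defined in \eqref{equation:def_MIBRW}. The critical slope at scale $t \in [t^{j-1}, t^j]$ is $2\log 2 \cdot \bar{\sigma}_j / \sqrt{\log 2} = 2\sqrt{\log 2}\,\bar{\sigma}_j$, so the ``effective ramp'' is the piecewise linear function $\gamma(t)$ built from these slopes. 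I would define
\begin{equation*}
Z_y \coloneqq \sum_{v \in A} \mathbbm{1}\bigl\{S_v^N \geq M_N^* + y\bigr\} \, \mathbbm{1}\bigl\{S_v^N(t) \leq \gamma(t) + y + f(t) \text{ for all } 1 \leq t \leq n\bigr\},
\end{equation*}
where $f(t)$ is a small logarithmic cushion chosen so that, when $\sigma_1 = \bar{\sigma}_1$, the constraint at the first scale is tight (this is what produces the factor $1+y$), while in the other cases the cushion leaves enough slack to decouple the barrier from the first scale.

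Next I would compute the first moment. For a fixed $v$, the process $(S_v^N(t))_{t=0}^n$ is a sum of independent centred Gaussians whose variances are controlled by $\sigma$ and whose total variance equals $\mathrm{Var}[S_v^N]$, which by \autoref{lemma:cov_comp}(ii) is $n \mathcal{I}_{\sigma^2}(1) + O(1) = n + O(1)$. Conditioning on the endpoint $S_v^N = M_N^* + y + z$ for $z \in [0,1]$ and applying a standard Gaussian density evaluation at the endpoint together with a ballot/barrier estimate for the conditioned bridge (done scale by scale on each effective interval $[t^{j-1},t^j]$, using that on those intervals $\mathcal{I}_{\sigma^2} \equiv \mathcal{I}_{\bar{\sigma}^2}$ or the strict-inequality case provides slack), one obtains
\begin{equation*}
\mathbb{E}[Z_y] \;\geq\; c_1 (1 + y\mathbbm{1}_{\sigma_1=\bar{\sigma}_1}) e^{-\frac{2\sqrt{\log 2}}{\bar{\sigma}_1} y}.
\end{equation*}
The factor $(1+y)$ when $\sigma_1 = \bar{\sigma}_1$ is the classical ballot contribution, whereas in the slack case the barrier is non-binding near $t=0$ and drops out.

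The main obstacle, and the part that needs the most care, is the second moment. Using the hierarchical/tree structure of the MIBRW from \eqref{equation:def_MIBRW} and $\mathcal{B}_k^N(v)$, I would split
\begin{equation*}
\mathbb{E}[Z_y^2] = \sum_{v,w \in A} \mathbb{P}(\mathcal{E}_v \cap \mathcal{E}_w)
\end{equation*}
according to the branching scale $s = s(v,w)$, roughly the largest $k$ with $\mathcal{B}_k^N(v) \cap \mathcal{B}_k^N(w) \neq \emptyset$, so that $S_v^N(n-s)$ and $S_w^N(n-s)$ are essentially identical up to an $O(1)$ perturbation and the subsequent increments are independent. Splitting the probability into the contribution up to scale $n-s$ and the two independent branching contributions afterwards, one obtains a bound of the form
\begin{equation*}
\mathbb{P}(\mathcal{E}_v \cap \mathcal{E}_w) \;\leq\; C\,\frac{(\text{barrier factor at } s)}{2^{2s}}\, e^{-\frac{4\sqrt{\log 2}}{\bar{\sigma}(s/n)} (\text{excess})},
\end{equation*}
and the barrier constraint at intermediate scales gives the decisive polynomial decay that, after summing $s$ from $0$ to $n$ and $(v,w)$ over pairs with that branching scale (of which there are $\leq |A| \cdot 2^{2s}$), yields $\mathbb{E}[Z_y^2] \leq C_2 \mathbb{E}[Z_y]$. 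This bookkeeping is where the hypothesis that on each $[\lambda^{j-1},\lambda^j]$ the identity $\mathcal{I}_{\sigma^2} \equiv \mathcal{I}_{\bar{\sigma}^2}$ or the strict inequality $\mathcal{I}_{\sigma^2} < \mathcal{I}_{\bar{\sigma}^2}$ holds throughout is used: it guarantees that at every branching scale either an entropic barrier argument or a strict convexity gap applies. Combining the bounds by Paley--Zygmund,
\begin{equation*}
\mathbb{P}\Bigl(\max_{v\in V_N} S_v^N > M_N^* + y\Bigr) \;\geq\; \mathbb{P}(Z_y \geq 1) \;\geq\; \frac{\mathbb{E}[Z_y]^2}{\mathbb{E}[Z_y^2]} \;\geq\; \frac{c_1^2}{C_2} (1 + y\mathbbm{1}_{\sigma_1=\bar{\sigma}_1}) e^{-\frac{2\sqrt{\log 2}}{\bar{\sigma}_1} y},
\end{equation*}
proving the lemma. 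The restriction $y \leq \sqrt{\log N}$ ensures the Gaussian density estimates and bridge bounds used in the first and second moments remain in the regime where the Gaussian tail is approximately $e^{-x^2/2}$ with $x$ close to the mean, rather than in a quadratic-decay deviation regime.
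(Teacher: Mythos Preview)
Your overall strategy---second moment method with a barrier-constrained counting variable and Paley--Zygmund---is exactly what the paper does. The differences are in the implementation, and they are worth noting.

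First, the paper does not treat general $m$ directly. It reduces to $m=1$ by concatenation across effective intervals (using independence of increments and choosing $y=0$ on intervals $i>1$), and for the subcase $\mathcal{I}_{\sigma^2}\equiv\mathcal{I}_{\bar\sigma^2}$ on $(0,1)$ it simply cites \cite[Theorem~1.1]{MR3101848}. So the paper only actually carries out the computation when $m=1$ and $\mathcal{I}_{\sigma^2}(s)<\mathcal{I}_{\bar\sigma^2}(s)$ for all $0<s<1$; in that regime $\sigma_1\neq\bar\sigma_1$ and the $(1+y)$ factor never appears in the argument itself. Your unified treatment via ballot estimates is the more ambitious route and is standard for the homogeneous case, but the paper sidesteps it by citation.

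Second, the constraint event is different. You impose an \emph{upper} barrier along the effective ramp $\gamma(t)$; the paper instead imposes a \emph{two-sided tube} $I_{k,n}(x)=[s_{k,n}(x)-f_{k,n},\,s_{k,n}(x)+f_{k,n}]$ of width $f_{k,n}\sim k^{2/3}$ around the optimal path $s_{k,n}(x)=\frac{\mathcal{I}_{\sigma^2}(k/n)}{\mathcal{I}_{\sigma^2}(1)}x$, which tracks the \emph{actual} variance profile rather than the effective one. This buys the paper two simplifications: (i) the first-moment lower bound reduces to bounding the tube probability by $1-c$ via a subadditive union bound (no bridge or ballot estimate is needed at all), and (ii) in the second moment, the tube pins down the position at the branching scale $r$, so the remaining probability $\mathbb{P}(S_w^N(t^1)-S_w^N(r)\in B_{r,n}(x))$ is estimated directly, with the summability coming from the strict convexity gap $\eta_1<1$ (resp.\ $\eta_2>1$) rather than from a barrier repulsion. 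Your observation that ``either an entropic barrier argument or a strict convexity gap applies'' is exactly right, but in the paper's restricted setting only the convexity gap is invoked.
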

Recall the notation, i.e. $\pi_j$ is the unique index such that, for $1 \leq j\leq m,$ we have $\lambda^j= \lambda_{\pi_j}$ and
that we write $t^j=\lambda^j \frac{\log N}{\log 2}$ as well as $t_j= \lambda_j \frac{\log N}{\log 2}$. Moreover, we set
\begin{align}
M_N^{*}(t)\coloneqq \sum_{j=1}^{m} \frac{t \wedge t^j -t^{j-1}}{\Delta t^j}\left[2\sqrt{\log2}\bar{\sigma}_j \Delta t^j  - \frac{(w_j \bar{\sigma}_j \log(\Delta t^j))}{4\sqrt{\log(2)}}\right], \quad t\in \mathbb{R}_{+}.
\end{align}
The proof of \autoref{lemma:right_tail_lb_MIBRW} is based on a second moment computation. 
We introduce suitable events that control the paths that reach the maximum. For $v \in V_N^{'}= V_{\sfrac{N}{2}}+(\sfrac{N}{4},\sfrac{N}{4})\subset V_N, \,x \in \mathbbm{R},\, 0 \leq k \leq n$ and $ 0<i\leq m$, let
\begin{align}\label{equation:s_kn,x}
s_{k,n}(x) \coloneqq
\begin{cases}
\frac{\mathcal{I}_{\sigma^2}(\sfrac{k}{n})}{\mathcal{I}_{\sigma^2}(\lambda^1)}(x), &\text{ if } 0\leq k \leq \lambda^1,\\
\frac{\mathcal{I}_{\sigma^2}(\sfrac{k}{n},\lambda^i)}{\mathcal{I}_{\sigma^2}(\lambda^{i-1},\lambda^i)}(x), &\text{ if } \lambda^{i-1}<k\leq \lambda^{i}
\end{cases} 
\end{align}
be the 'optimal path' followed by extremal particles and
\begin{align}\label{equation:f_kn}
f_{k,n} \coloneqq 
\begin{cases}
C_f(\mathcal{I}_{\sigma^2}(\sfrac{k}{n})n)^{\sfrac{2}{3}}, &\text{ if } 0 \leq k \leq t_1, \\
C_f(\mathcal{I}_{\sigma^2}(\sfrac{k}{n}, \lambda^1)n)^{\sfrac{2}{3}}, &\text{ if } t_1<k \leq t^1,\\
C_f(\mathcal{I}_{\sigma^2}(\lambda^i,\sfrac{k}{n})n)^{\sfrac{2}{3}}, &\text{ if } t^{i}<k \leq t_{\pi_i + 1}: \, i \in \{1,\dotsc,m-1\}\\
C_f(\mathcal{I}_{\sigma^2}(\sfrac{k}{n}, \lambda^{i+1})n)^{\sfrac{2}{3}}, &\text{ if } t_{\pi_i +1}<k \leq t^{i+1}: \, i \in \{1,\dotsc,m-1\}
\end{cases}
\end{align}
be the concave barrier. The constant $C_f$ depends on the parameters and will be fixed later in the proof.
For $v \in V_N, \,x \in \mathbbm{R},\, \infty>y>0$ and $0 \leq k \leq n$, let
	\begin{align}
		I_n^y(1) \coloneqq [&\Delta M_N^{*}(t^1)+y-1,\Delta M_N^{*}(t^1)+y], \label{eq:4.28}\\
		I_n^y(i) \coloneqq [&\Delta M_N^{*}(t^i)-1,\Delta M_N^{*}(t^i)], \text{ for }1<i\leq m \label{eq:4.29} \\
		I_{k,n}(x) \coloneqq [&s_{k,n}(x)-f_{k,n},s_{k,n}(x)+f_{k,n}], \\
		C_v^{N,y}(r) \coloneqq \{&\Delta S^N_v(t^i) \in I_n^y(i), \, S^N_v (k+t^{i-1})-S_v^N(t^{i-1}) \in I_{k,n}(\Delta S_v^N(t^{i}))\nonumber\\ &\forall 0 < k < t^{i+1}-t^{i},\, 0< i\leq m: \, k+t^{i-1}\leq r \}, \\
		h_N(y) \coloneqq \text{ } &\sum_{v \in V^\prime_N} \mathbbm{1}_{C_v^{N,y}(t^m)}.
	\end{align}
$f_{k,n}$ and $s_{k,n}(x)$ are defined as before (see \eqref{equation:s_kn,x} and \eqref{equation:f_kn}). We can restrict the proof to the case of $m=1$ and to the assumption that $\mathcal{I}_{\sigma^2}(s)<\mathcal{I}_{\bar{\sigma}^2}(s)$ holds for all $0<s<1$. The statement in case of equality is given by \cite[Theorem 1.1]{MR3101848}. The lower bound then follows using the independence of increments and the fact that on the intervals, when $i>1$, we choose $y=0$, compare \eqref{eq:4.28} with \eqref{eq:4.29}. This implies that there is a constant $C>0$, such that we obtain as lower bound
\begin{align}
C\left(1+y \mathbbm{1}_{\sigma_1=\bar{\sigma}_1}\right)e^{- \frac{2\sqrt{\log(2)}}{\bar{\sigma}_1}y} \prod_{i=2}^{m} e^{- \frac{2\sqrt{\log(2)}}{\bar{\sigma}_i}0}\geq C\left(1+y \mathbbm{1}_{\sigma_1=\bar{\sigma}_1}\right) e^{- \frac{2\sqrt{\log(2)}}{\bar{\sigma}_1}y}.
\end{align}
Thus, until the end of the proof of \autoref{lemma:right_tail_lb_MIBRW}, we restrict ourselves to the case when $m=1$ and $\mathcal{I}_{\sigma^2}(s)<\mathcal{I}_{\bar{\sigma}^2}(s)$ holds for all $0<s<1$.
\begin{lemma}\label{lemma:lb_1st_moment_hny}
	There are constants $C,c>0$ such that it holds for all $N\in \mathbbm{N}$ sufficiently large and $y\in [0,\sqrt{\log N}]$,
	\begin{align}\label{eq:4.33}
		c\geq\mathbbm{E}\left[h_N(y)\right]\geq C e^{-\frac{2\sqrt{\log(2)}}{\bar{\sigma}_1}y}.
	\end{align}
\end{lemma}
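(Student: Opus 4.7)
The plan is a first-moment computation for the indicator $\mathbbm{1}_{C_v^{N,y}(t^1)}$. In the reduced setting $m=1$ with $\mathcal{I}_{\sigma^2}<\mathcal{I}_{\bar{\sigma}^2}$ on $(0,1)$ we have $t^1=n$, $w_1=1$, and from $\mathcal{I}_{\bar{\sigma}^2}(1)=\bar{\sigma}_1^2\cdot 1 =1$ also $\bar{\sigma}_1=1$. Since the law of $\{S_v^N\}_{v\in V_N}$ is translation-invariant (the MIBRW is built from periodised families $\{b^N_{k,B}\}$), the first moment reduces to
\begin{align*}
\mathbb{E}[h_N(y)] = |V_N'|\cdot\mathbb{P}\bigl(C_{v_0}^{N,y}(n)\bigr)
\end{align*}
for any fixed reference vertex $v_0\in V_N'$. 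Moreover, because increments at distinct levels $k$ involve disjoint $b$-variables, $k\mapsto S_{v_0}^N(k)$ has independent centered Gaussian increments of variance $\sigma^2((n-k)/n)$, so we may decompose on the endpoint,
\begin{align*}
\mathbb{P}\bigl(C_{v_0}^{N,y}(n)\bigr) = \int_{I_n^y(1)} p_n(x)\,Q_n(x)\,dx,\qquad Q_n(x)\coloneqq \mathbb{P}\!\left(S_{v_0}^N(k)\in I_{k,n}(x)\ \forall\,0<k<n\,\Bigm|\,S_{v_0}^N(n)=x\right),
\end{align*}
where $p_n$ is the Gaussian density of $S_{v_0}^N(n)$.

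By \autoref{lemma:cov_comp}(ii), $\mathrm{Var}(S_{v_0}^N(n)) = n + O(1)$. Expanding $x^2/(2\mathrm{Var}(S_{v_0}^N(n)))$ around $x=M_N^* = 2\sqrt{\log 2}\bar{\sigma}_1 n - w_1\bar{\sigma}_1 \log n/(4\sqrt{\log 2})$ and using $y\leq \sqrt{\log N}$ yields, uniformly in $y$ and $x\in I_n^y(1)$,
\begin{align*}
p_n(x)\;\asymp\;\frac{1}{\sqrt{n}}\,N^{-2\bar{\sigma}_1^2}\,n^{w_1\bar{\sigma}_1^2/2}\,e^{-\frac{2\sqrt{\log 2}}{\bar{\sigma}_1}y}.
\end{align*}
In our case $\bar{\sigma}_1=w_1=1$, so this is a constant multiple of $N^{-2}\,e^{-(2\sqrt{\log 2}/\bar{\sigma}_1)y}$; multiplied by $|V_N'|\asymp N^2$ it produces exactly the target exponential factor.

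The upper bound $\mathbb{E}[h_N(y)]\leq c$ then follows from the crude estimate $Q_n(x)\leq 1$. For the lower bound it remains to show $Q_n(x)\geq c_0>0$ uniformly in $x\in I_n^y(1)$, once the constant $C_f$ in the definition of $f_{k,n}$ is fixed large enough. Conditionally on $S_{v_0}^N(n)=x$ the centered process $\tilde S_k\coloneqq S_{v_0}^N(k)-s_{k,n}(x)$ is a Gaussian bridge with conditional variance
\begin{align*}
\mathrm{Var}(S_{v_0}^N(k))\bigl(1-\mathrm{Var}(S_{v_0}^N(k))/\mathrm{Var}(S_{v_0}^N(n))\bigr)\approx n\,\mathcal{I}_{\sigma^2}(k/n)\bigl(1-\mathcal{I}_{\sigma^2}(k/n)\bigr).
\end{align*}
Compared with $f_{k,n}=C_f(\mathcal{I}_{\sigma^2}(k/n)n)^{2/3}$, the barrier exceeds the local standard deviation by a factor of at least $n^{1/6}$ on the bulk of $(0,n)$, so a block-dyadic union bound for $\sup_k|\tilde S_k|/f_{k,n}$ combined with a Gaussian maximal inequality gives $Q_n(x)\geq c_0$ once $C_f$ is sufficiently large.

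The main technical point will be the control of $Q_n(x)$ near $k=0$ and $k=n$, where the conditional variance degenerates and the barrier-to-standard-deviation ratio is of order only $k^{1/6}$ (respectively $(n-k)^{1/6}$). The assumption $\mathcal{I}_{\sigma^2}<\mathcal{I}_{\bar{\sigma}^2}$ on $(0,1)$ is essential here: equivalently $w_1=1$, so no ballot-type downward constraint has to be enforced and the $k^{2/3}$-barrier is genuinely wider than the local fluctuation for every $k\geq 1$; enlarging $C_f$ absorbs the residual constants on these endpoint regions.
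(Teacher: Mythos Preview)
Your approach is essentially the same as the paper's: factor the probability into an endpoint term and a barrier term, estimate the endpoint by a Gaussian density computation, and show the barrier constraint holds with probability bounded away from zero once $C_f$ is large. The paper's implementation is a bit more direct than yours: it notes that the process $k\mapsto S_v^N(k)-s_{k,n}(S_v^N(n))$ is \emph{independent} of the endpoint $S_v^N(n)$ (Gaussian orthogonality), so the factorization is exact and $Q_n(x)$ does not depend on $x$; then a plain union bound $\sum_k \mathbb{P}(|\tilde S_k|>f_{k,n})\leq C\sum_k e^{-c\,k^{1/3}}<1$ already gives $Q_n\geq c_0$, with no block-dyadic decomposition or maximal inequality required.
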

\begin{lemma}\label{lemma:up_2nd_moment_hny}
	There is a constant $\tilde{C}>0$ independent of $N$, such that, for $y \in [0,\sqrt{\log N}]$,
	\begin{align}
		\mathbbm{E}\left[h_N^2(y)\right]\leq \mathbbm{E}\left[h_N(y)\right]^2+ (1+\tilde{C})\mathbbm{E}\left[h_N(y)\right].
	\end{align}
\end{lemma}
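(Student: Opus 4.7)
The plan is to decompose
\begin{align*}
\mathbb{E}[h_N^2(y)] = \mathbb{E}[h_N(y)] + \sum_{\substack{v,w \in V_N' \\ v \neq w}} \mathbb{P}\bigl(C_v^{N,y}(t^1) \cap C_w^{N,y}(t^1)\bigr),
\end{align*}
using $\mathbbm{1}_{C_v}^2 = \mathbbm{1}_{C_v}$, so that the diagonal already supplies the additive $\mathbb{E}[h_N(y)]$ contribution. The remaining task is to bound the off-diagonal sum by $\mathbb{E}[h_N(y)]^2 + \tilde{C}\,\mathbb{E}[h_N(y)]$ for some absolute constant $\tilde{C}$; by the first-moment upper bound in \autoref{lemma:lb_1st_moment_hny} this is equivalent to bounding it by an absolute constant.

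I would organise the off-diagonal sum by the branching scale $k^{\ast}(v,w) \coloneqq n - \lceil \log_+ \|v-w\|_\infty \rceil$ intrinsic to the MIBRW of \eqref{equation:def_MIBRW}. For indices $\ell < k^{\ast}$, the boxes $B \in \mathcal{B}_\ell^N(v) \cap \mathcal{B}_\ell^N(w)$ appearing in the sums defining $S_v^N$ and $S_w^N$ coincide, whereas for $\ell \geq k^{\ast}$ only a uniformly bounded number of boxes are shared by both. Up to this $O(1)$ discrepancy (controlled by \autoref{lemma:cov_comp}), the processes $S_v^N$ and $S_w^N$ split into a common prefix $S^{\mathrm{com}}$ up to scale $k^{\ast}$ and two conditionally independent suffixes. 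On $C_v^{N,y} \cap C_w^{N,y}$ both walks are forced to follow the barrier tube $[s_{\ell,n}(\cdot)-f_{\ell,n},\, s_{\ell,n}(\cdot)+f_{\ell,n}]$ on all of $[0,t^1]$, which forces the shared prefix into the tube on $[0,k^{\ast}]$ and then requires each independent suffix to reach $I_n^y(1)$ through the remaining barrier.

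Applying the Gaussian density along the optimal path together with the inhomogeneous ballot/barrier estimates underlying \autoref{lemma:lb_1st_moment_hny} (which, under our standing assumptions $m=1$ and $\mathcal{I}_{\sigma^2}(s) < \mathcal{I}_{\bar{\sigma}^2}(s)$ on $(0,1)$, reduces to \cite[Theorem 1.1]{MR3101848}), I would derive the pointwise estimate
\begin{align*}
\mathbb{P}\bigl(C_v^{N,y}(t^1) \cap C_w^{N,y}(t^1)\bigr) \;\leq\; \frac{C(1+y)^2}{N^2}\, e^{-\frac{4\sqrt{\log 2}}{\bar{\sigma}_1}\, y}\,\Phi\bigl(n - k^{\ast}(v,w)\bigr),
\end{align*}
where $\Phi(j)$ is the entropic prefactor accounting for the cost of constraining both suffixes simultaneously after a shared prefix of depth $n-j$. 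Under the strict inequality assumption, $\Phi(j)$ decays rapidly enough in $j$ that $\sum_{j\geq 0} 2^{2j}\Phi(j) < \infty$. Since the number of pairs with $n - k^{\ast}(v,w) = j$ is $O(|V_N'|\cdot 2^{2j})$, summing over $j$ and comparing to the first-moment lower bound yields
\begin{align*}
\sum_{v \neq w} \mathbb{P}\bigl(C_v^{N,y}(t^1) \cap C_w^{N,y}(t^1)\bigr) \;\leq\; C\, \mathbb{E}[h_N(y)]^2,
\end{align*}
with the close-pair range ($j$ small) contributing the residual $\tilde{C}\,\mathbb{E}[h_N(y)]$ correction.

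The main obstacle is the uniform conditional ballot estimate underlying the displayed bound on $\mathbb{P}(C_v \cap C_w)$: one must show that, conditional on the shared prefix ending at any admissible value in $I_{k^{\ast},n}(\cdot)$, the joint probability that the two independent suffixes stay in their respective barrier tubes and land in $I_n^y(1)$ factorises, up to constants uniform in the endpoint of the prefix, into the product of two single-path estimates. This is the inhomogeneous analogue of the barrier-tube argument of Bramson--Zeitouni \cite{MR2846636} and Bramson--Ding--Zeitouni \cite{MR3433630}, and the strict inequality $\mathcal{I}_{\sigma^2} < \mathcal{I}_{\bar{\sigma}^2}$ on $(0,1)$ is precisely what delivers the geometric decay in $j$ needed to close the second-moment argument.
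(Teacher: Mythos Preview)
Your decomposition of $\mathbb{E}[h_N(y)^2]$ by the branching scale $k^\ast(v,w)$ is exactly what the paper does (there written $r(v,w)=n-\lceil\log_2(d_N^\infty(v,w)+1)\rceil$), and the reduction of the diagonal to $\mathbb{E}[h_N(y)]$ is the same. The substantive difference is in how the off-diagonal is handled.

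You aim for a two-sided factorisation: force the shared prefix into the tube, then bound the \emph{two} independent suffixes by a product of single-path ballot estimates, yielding a bound of order $\mathbb{E}[h_N(y)]^2\cdot\Phi$. You correctly flag the uniform conditional ballot estimate as the obstacle. The paper simply sidesteps this obstacle. It keeps the \emph{full} constraint $C_v^{N,y}(t^1)$ intact, drops all constraints on $w$ up to the branching time $r$, and retains for $w$ only the terminal condition $S_w^N(t^1)-S_w^N(r)\in B_{r,n}(x)$. Summing over $v$ then produces exactly one factor of $\mathbb{E}[h_N(y)]$, and the remaining task is to bound
\[
\sum_{r=1}^{n-1} 2^{2(t^1-r)}\max_{x\in I_n^y(1)}\mathbb{P}\bigl(S_w^N(t^1)-S_w^N(r)\in B_{r,n}(x)\bigr)
\]
by an absolute constant. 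This is a single explicit Gaussian integral; the strict inequality $\mathcal{I}_{\sigma^2}<\mathcal{I}_{\bar\sigma^2}$ on $(0,1)$ enters directly as exponents $\eta_1<1$ (for $r\le t_1$) and $\eta_2>1$ (for $r>t_1$), which make the sum geometric. No ballot/barrier factorisation is ever needed; the tube width $f_{r,n}$ appears only as the length of the target interval $B_{r,n}(x)$ and is absorbed into the $o(r)$, $o(t^1-r)$ error terms.

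Two further remarks. First, your appeal to \cite[Theorem~1.1]{MR3101848} is misplaced: that result covers the \emph{equality} case $\mathcal{I}_{\sigma^2}\equiv\mathcal{I}_{\bar\sigma^2}$, not the strict sub-hull case you are working in, so it does not supply the barrier input you invoke. Second, your displayed pointwise bound carries a $(1+y)^2$ prefactor; in the present case $\sigma_1\neq\bar\sigma_1$, so $\mathbb{E}[h_N(y)]^2\asymp e^{-4\sqrt{\log 2}\,y/\bar\sigma_1}$ with no polynomial factor, and your intermediate inequality $\sum_{v\neq w}\mathbb{P}(C_v\cap C_w)\le C\,\mathbb{E}[h_N(y)]^2$ would fail as stated. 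The paper's one-sided route lands on $\tilde C\,\mathbb{E}[h_N(y)]$ directly and avoids this issue altogether.
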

\begin{proof}[Proof of \autoref{lemma:lb_1st_moment_hny}]
	In the following, we write $M_N^{*}$ instead of $M_N^{*}(t^1)$.
	By linearity of expectations,
	\begin{align}\label{equation:4.53}
	\mathbbm{E}\left[h_N\right]&=\frac{1}{4}2^{2t^1}\mathbbm{P}(S_v^N(t^1)\in I_n(1), \, S_v^N(k) \in I_{k,n}(S_v^N(t^1))\text{ for } 0<k<t^1).
	\end{align}
	Note that $\mathbb{E}\left[s_{k,n}(S^N_v(t^1))\left(S^N_v(k)-s_{k,n}(S^N_v(t^1))\right) \right]=0$, and so 
	\begin{align}\label{equation:4.54}
	\mathrm{Var}\left[S_v^N(k)-s_{k,n}(S_v^N(t^1))\right]&= \mathrm{Var}\left[S_v^N(k)-s_{k,n}(S_v^N(t^1))\right]  =n \mathcal{I}_{\sigma^2}\left(\frac{k}{n}\right)\left(1-\frac{\mathcal{I}_{\sigma^2}(\sfrac{k}{n})}{\mathcal{I}_{\sigma^2}(\lambda^1)}\right).
	\end{align}
	In particular, $\mathbbm{E}\left[S_v^N(k)-s_{k,n}(S_v^N(t^1))\right]=0$. Under our assumptions, we have .
	By conditioning the last event in \eqref{equation:4.53} on $S^N_v(t^1)$, using that this is independent of $\{S^N_v(k)-s_{k,n}(S^N_v(t^1))\}_{k=0}^{t^1}$, we have
	\begin{align}\label{equation:lem5.5_eq1}
		\mathbb{E}\left[h_N(y)\right]= \frac{1}{4}2^{2n} \mathbbm{P}\left(S_v^N (t^1)\in [M_N^{*}+y-1,M_N^{*}+y]\right) \mathbbm{P}\left(S_v^N(k)\in I_{k,n}(S_v^N(t^1), \, 0<k<t^1\right).
	\end{align}
	To estimate the first probability in \eqref{equation:lem5.5_eq1}, note that $S^N_v(t^1)\sim \mathcal{N}\left(0,\bar{\sigma}_1^2 t^1\right)$ and that the assumptions imply the identity $M_N^{*}=2\sqrt{\log(2)}\bar{\sigma}_1 n - \frac{1}{4 \sqrt{\log(2)}} \log(n) \bar{\sigma}_1$. Thus, by a standard Gaussian estimate,
	\begin{align}\label{equation:5.30}
	\mathbbm{P}\left(S_v^N(t^1)\in I_n^y(1)\right)=\int_{M_N^{*}+y-1}^{M_N^{*}+y} \frac{\exp\left[-x^2/(2 \bar{\sigma}_1^2 t^1)\right]}{\sqrt{2 \pi \bar{\sigma}_1^2 t^1}} \mathrm{d}x\geq  \frac{\exp\left[-(M_N^{*}+y)^2/(2\bar{\sigma}_1^2 t^1)\right]}{\sqrt{2\pi \bar{\sigma}_1^2 t^1}}.
	\end{align}
	By expanding the square in \eqref{equation:5.30} and bounding all terms in the exponential that tend to $0$ as $n\rightarrow \infty$ by a constant, we can find a constant $C>0$ such that
	\begin{align}\label{eq:4.40}
	\mathbbm{P}\left(S_v^N(t^1)\in I_n^y(1)\right)\geq C N^{-2} e^{-y \frac{2\sqrt{\log(2)}}{\bar{\sigma}_1}}.
	\end{align}
	We turn to the second probability in \eqref{equation:lem5.5_eq1}.
	By subadditivity of measures and using \eqref{equation:4.54},
	\begin{align}\label{equation:4.70}
	\mathbbm{P}(S_v^N(k)\in I_{k,n}(S_v^N(t^1)), \, 0<k<t^1) &\geq 1 - 2 \sum_{k=1}^{t^1-1}\mathbbm{P}(S_v^N(k)-s_{k,n}(S_v^N(t^1))>f_{k,n})\nonumber \\ &\geq 1- 2\sum_{k=1}^{t^1-1}C\exp\left[-\frac{1}{2}\frac{f_{k,n}^2}{\mathcal{I}_{\sigma^2}(\sfrac{k}{n})n(1-\frac{\mathcal{I}_{\sigma^2}(\sfrac{k}{n})}{\mathcal{I}_{\sigma^2}(\lambda^1)})}\right].
	\end{align}
	By definition of the concave barrier in \eqref{equation:f_kn}, we may split and bound the sum in \eqref{equation:4.70} from above by
	\begin{align}\label{eq:4.41}
	&  \sum_{k=1}^{t_1} C\exp\left[-\frac{1}{2}C_f^2 \sigma_1^{\sfrac{2}{3}}k^{\sfrac{1}{3}}\right]\mathbbm{1}_{\sigma_1 \neq 0} + \sum_{k=t_1 +1}^{t^1 -1} C\exp\left[-\frac{1}{2}C_f^2 \min_{i \in \{2, \dotsc,\pi_1\}:\sigma_i>0}(\sigma_i)^{\sfrac{2}{3}}(t^1-k)^{\sfrac{1}{3}}\right]
	< \frac{c}{2}, 
	\end{align}
	where  $0<c<1$ is a constant independent of $n$, if $C_f$ is large enough.
	Inserting \eqref{eq:4.41} into \eqref{equation:4.70} gives
	\begin{align}\label{equation:4.71}
	\mathbbm{P}(S_v^N(k)\in I_{k,n}(S_v^N(t^1)), \, \forall 0<k<t^1) > 1-c=c_2>0.
	\end{align}
	Inserting \eqref{equation:4.71} and \eqref{eq:4.40} into \eqref{equation:lem5.5_eq1} finishes the proof of the lower bound in \eqref{eq:4.33}.
	To get an upper bound in \eqref{equation:lem5.5_eq1} we bound the second probability by $1$. For the first probability, as for the lower bound, we get
	\begin{align}
		\mathbbm{P}\left(S_v^N(t^1)\in I_n^y(1)\right)&\leq C N^{-2} \exp\left[-(y-1) \frac{2\sqrt{\log(2)}}{\bar{\sigma}_1} \right].
	\end{align}
	Inserting this into \eqref{equation:lem5.5_eq1}, we obtain the upper bound in \eqref{eq:4.33}.
\end{proof}
\begin{proof}[Proof of \autoref{lemma:up_2nd_moment_hny}]
	As in the proof of \autoref{lemma:lb_1st_moment_hny}, we write $M_N^{*}$ instead of $M_N^{*}(t^1)$.
	Recall that, for $v,w \in V_N$, $r(v,w)= n - \lceil \log_2(d_N^\infty(v,w)+1)\rceil$ denotes the number of scales of independent increments of the processes $S_v^N(k)$ and $S_w^N(k^\prime)$. By decomposing the second moment along $r(\cdot,\cdot)$ and using independence of the increments,
	\begin{align}\label{equation:5.37}
		\mathbbm{E}\left[h_N^2(y)\right]&= \sum_{v,w \in V^\prime_N} \mathbbm{P}\left(C_v^{N,y}(t^1) \cap C_w^{N,y}(t^1)\right)
		= \sum_{k=0}^{n}\sum_{\substack{v,w \in V^\prime_N \\ r(v,w)=k}} \mathbbm{P}\left(C_v^{N,y}(t^1) \cap C_w^{N,y}(t^1)\right)\nonumber\\
		&\leq \mathbbm{E}\left[h_N(y)\right]^2+ \mathbbm{E}\left[h_N(y)\right]
		+ \sum_{k=1}^{n-1}\sum_{\substack{v,w \in V^\prime_N \\ r(v,w)=k}}\mathbbm{P}\left(C_v^{N,y}(t^1) \cap C_w^{N,y}(t^1) \right).
	\end{align}
	To bound the double sum from above, we bound each summand from above.
	Fix $v,w \in V_N^{'}$ with $r(v,w)=r=k \in \{1,\dotsc,n-1\}$. We set $B_{k,n}(x)\coloneqq [x-s_{k,n}(x)-f_{r,n},x-s_{k,n}(x)+f_{r,n}]$. Dropping the constraint for $w$ up to time $r$, we have
	\begin{align}
	\mathbbm{P}\left(C_v^{N,y} (t^1)\cap C_w^{N,y}(t^1) \right)\leq &\mathbbm{P}\left(C_v^{N,y}(t^1) \cap C_w^{N,y}(r)\right)\max_{x\in I_n(1)} \mathbbm{P}\left(S_w^N(t^1)-S_w^N(r)\in B_{r,n}(x)\right)\nonumber\\ \leq &\mathbbm{P}\left(C_v^{N,y}(t^m)\right) \max_{x \in I_n(1)}\mathbbm{P}\left(S_w^N(t^1)-S_w^N(r)\in B_{r,n}(x)\right).
	\end{align}
	For fixed $v \in V_N^{'}$, the number of points $w\in V_N^{'}$ that satisfy $d_N^{\infty}(v,w)\in [2^k,2^{k+1}],$ is bounded by $c_1 2^{2k}=2^{2(t^1-r)}$ for some $c_1>0$. Therefore, we can bound the last summand in \eqref{equation:5.37} from above by
	\begin{align}\label{equation:5.39}
		c_1 \mathbbm{E}\left[h_N(y)\right] \sum_{r=1}^{n-1}2^{2(t^1-r)} \max_{\substack{x \in I_n^y(1)\\v \in V_N}} \mathbbm{P}\left(S_v^N(t^1)- S_v^N(r) \in x + I_{r,n}(x)\right).
	\end{align}
	To bound the probability in \eqref{equation:5.39}, we use that for any $x \in I_n^y(1)$,
	\begin{align}
		 A_{r,n,x}^y &\coloneqq \mathbbm{P}\left(S_v^N(t^1)-S_v^N(r)\in x +I_{r,n}(x)\right) =\int_{x-s_{r,n}(x)-f_{r,n}}^{x-s_{r,n}(x)+f_{r,n}} \frac{\exp\left[-\frac{1}{2} \frac{z^2}{\mathcal{I}_{\sigma^2}(\sfrac{r}{n},\lambda^1)n}\right]}{\sqrt{2 \pi \mathcal{I}_{\sigma^2}(\sfrac{r}{n},\lambda^1)n}} \mathrm{d}z\nonumber \\
		&\leq \frac{2 f_{r,n}}{\sqrt{\mathcal{I}_{\sigma^2}(\sfrac{r}{n},\lambda^1)n}}\exp\left[-\frac{1}{2}\frac{(M_N^{*}+y -s_{r,n}(M_N^{*}+y)-f_{r,n} )^2}{\mathcal{I}_{\sigma^2}(\sfrac{r}{n},\lambda^1)n}\right]. \label{equation:arnxy_exp}
	\end{align}
	Noting that $n=t^1$ and using \eqref{equation:s_kn,x}, we bound from below the square in the exponential in \eqref{equation:arnxy_exp} by
	\begin{align}
		(M_N^{*}+y)^2\left(1-\frac{\mathcal{I}_{\sigma^2}(\sfrac{r}{n})}{\mathcal{I}_{\sigma^2}(\lambda^1)}\right)^2 - 2f_{r,n}\frac{\mathcal{I}_{\sigma^2}(\sfrac{r}{n},\lambda^1)}{\mathcal{I}_{\sigma^2}(\lambda^1)}(M_N^{*}+y)= (M_N^{*})^2\left(1-\frac{\mathcal{I}_{\sigma^2}(\sfrac{r}{n})}{\mathcal{I}_{\sigma^2}(\lambda^1)}\right)^2-2f_{r,n}M_N^{*}\frac{\mathcal{I}_{\sigma^2}(\sfrac{r}{n},\lambda^1)}{\mathcal{I}_{\sigma^2}(\lambda^1)}\nonumber\\+(2M_N^{*}y+y^2)\left(1-\frac{\mathcal{I}_{\sigma^2}(\sfrac{r}{n})}{\mathcal{I}_{\sigma^2}(\lambda^1)}\right)^2-2 yf_{r,n}\frac{\mathcal{I}_{\sigma^2}(\sfrac{r}{n},\lambda^1)}{\mathcal{I}_{\sigma^2}(\lambda^1)} . \label{eq:lb 8.51}
	\end{align}
	Inserting \eqref{eq:lb 8.51} into \eqref{equation:arnxy_exp}, dropping the term involving $y^2$, and noting that we can bound the term $\exp\left[\frac{2 yf_{r,n}\frac{\mathcal{I}_{\sigma^2}(\sfrac{r}{n},\lambda^1)}{\mathcal{I}_{\sigma^2}(\lambda^1)}}{\mathcal{I}_{\sigma^2}(\sfrac{r}{n},\lambda^1)n}\right]$ by a constant, we obtain that \eqref{equation:arnxy_exp} is bounded from above by
	\begin{align}\label{eq:4.50}
		\frac{C f_{r,n}}{\sqrt{\mathcal{I}_{\sigma^2}(\sfrac{r}{n},\lambda^1)n}} &\exp \left[-\frac{1}{2} \frac{(M_N^{*})^2\left(1-\frac{\mathcal{I}_{\sigma^2}(\sfrac{r}{n})}{\mathcal{I}_{\sigma^2}(\lambda^1)}\right)^2-2f_{r,n}M_N^{*}\frac{\mathcal{I}_{\sigma^2}(\sfrac{r}{n},\lambda^1)}{\mathcal{I}_{\sigma^2}(\lambda^1)}+2yM_N^{*}\left(1-\frac{\mathcal{I}_{\sigma^2}(\sfrac{r}{n})}{\mathcal{I}_{\sigma^2}(\lambda^1)}\right)^2}{\mathcal{I}_{\sigma^2}(\sfrac{r}{n},\lambda^1)n} \right]\nonumber\\
		&\leq \frac{ C f_{r,n}}{\sqrt{\mathcal{I}_{\sigma^2}(\sfrac{r}{n},\lambda^1)n}}\exp\left[-2\log(2)t^1 \frac{\mathcal{I}_{\sigma^2}(\sfrac{r}{n},\lambda^1)}{\mathcal{I}_{\sigma^2}(\lambda^1)} + \frac{\left(1+\frac{y}{4\sqrt{\log(2)}t^1}\right)}{2}\log(t^1)\frac{\mathcal{I}_{\sigma^2}(\sfrac{r}{n},\lambda^1)}{\mathcal{I}_{\sigma^2}(\lambda^1)} \right. \nonumber
		\\ &\qquad \quad    \left.-2y\sqrt{\log(2)}\frac{\mathcal{I}_{\sigma^2}(\sfrac{r}{n},\lambda^1)}{\mathcal{I}_{\sigma^2}(\lambda^1)}-\frac{\log(t^1)^2}{32 \log(2)t^1}\frac{\mathcal{I}_{\sigma^2}(\sfrac{r}{n},\lambda^1)}{\mathcal{I}_{\sigma^2}(\lambda^1)}+\frac{C_f \sigma_1^{\sfrac{4}{3}}r^{\sfrac{2}{3}}}{(4\log(2))^{-\sfrac{1}{2}}\bar{\sigma}_1}\right].
	\end{align}
	Let $i$ be minimal such that $\sigma_i>0$.
	We distinguish the cases $0<r\leq t_i$ and $t_i<k<t^1$.
	We may assume that $\sigma_1>0$.\\
	\underline{Case 1:} In this case, we have $\mathrm{Var}[S_v^N(t^1)-S_v^N(r)]= \mathcal{I}_{\sigma^2}(\sfrac{r}{n},\lambda^1)n$ and $f_{r,n}=C_f (\sigma_1^2r)^{\sfrac{2}{3}}$. Since $r\leq t_1$, $\frac{\frac{1}{\lambda_1}\mathcal{I}_{\sigma^2}(\lambda_1)}{\frac{1}{\lambda^1}\mathcal{I}_{\sigma^2}(\lambda^1)}=\frac{\sigma_1^2}{\bar{\sigma}_1^2}\in (0,1),$ and so there is an $\eta_1<1$, independent of $r$ and $n$, such that
	 \begin{align}
	 \frac{\mathcal{I}_{\sigma^2}(\sfrac{r}{n},\lambda^1)}{\mathcal{I}_{\sigma^2}(\lambda^1)}t^1= t^1-t^1 \frac{\mathcal{I}_{\sigma^2}(\sfrac{r}{n})}{\mathcal{I}_{\sigma^2}(\lambda^1)}= t^1-r\frac{\frac{1}{\sfrac{r}{n}}\mathcal{I}_{\sigma^2}(\sfrac{r}{n})}{\frac{1}{\lambda^1}\mathcal{I}_{\sigma^2}(\lambda^1)}=t^1-r\frac{\frac{1}{\lambda_1}\mathcal{I}_{\sigma^2}(\lambda_1)}{\frac{1}{\lambda^1}\mathcal{I}_{\sigma^2}(\lambda^1)}=t^1-\eta_1 r.
	 \end{align}
	 Similarly, we have $\frac{\mathcal{I}_{\sigma^2}(\sfrac{r}{n},\lambda^1)}{\mathcal{I}_{\sigma^2}(\lambda^1)} \geq 1- \frac{\sigma_1^2}{\bar{\sigma}_1^2} \frac{\lambda_1}{\lambda^1}$. Using these facts in \eqref{eq:4.50}, we get
	 \begin{align}
	 	A^y_{r,n,x}&\leq C r^{\sfrac{2}{3}} \exp\left(\tilde{C}r^{\sfrac{2}{3}}\right) 2^{-2(t^1-\eta_1 r)} \frac{\exp\left[\log(t^1)\frac{\mathcal{I}_{\sigma^2}(\sfrac{r}{n},\lambda^1)}{2\mathcal{I}_{\sigma^2}(\lambda^1)}-2y\sqrt{\log(2)}\left(1-\frac{\sigma_1^2}{\bar{\sigma}_1^2}\frac{\lambda_1}{\lambda^1}\right)\right]}{\sqrt{\mathcal{I}_{\sigma^2} (\sfrac{r}{n},\lambda^1)t^1}}\nonumber\\
	 	&\qquad\times \exp\left[-\log(t^1)\frac{\mathcal{I}_{\sigma^2}(\sfrac{r}{n},\lambda^1)}{2t^1\mathcal{I}_{\sigma^2}(\lambda^1)} \left(\frac{\log(t^1)-4\sqrt{\log(2)}y}{16 \log(2)}\right)\right].
	 \end{align}
	 Note that we have $\frac{\mathcal{I}_{\sigma^2}(\sfrac{r}{n},\lambda^1)}{\mathcal{I}_{\sigma^2}(\lambda^1)}<1$ and $\left(\frac{\log(t^1)-4\sqrt{\log(2)}y}{16 \log(2)}\right)\geq 0$. Thus, in the case $0<r\leq t_1$, we have
	 \begin{align}\label{equation:5.451}
	 	A_{r,n,x}^y\leq& C r^{\sfrac{2}{3}} 2^{-2(t^1-\eta_1 r)}
	 	  \exp\left[\tilde{C}r^{\sfrac{2}{3}}-\frac{2\sqrt{\log(2)}}{\bar{\sigma}_1}\left(1-\frac{\sigma_1^2}{\bar{\sigma}_1^2}\frac{\lambda_1}{\lambda^1}\right)y\right]\nonumber\\
	 	  \leq& C2^{-2(t^1-\eta_1 r)+o(r)} \exp\left[-\frac{2\sqrt{\log(2)}}{\bar{\sigma}_1}\left(1-\frac{\sigma_1^2}{\bar{\sigma}_1^2}\frac{\lambda_1}{\lambda^1}\right)y\right].
	 \end{align}
	 Note that for the last factor in the exponent we know $0<1-\frac{\sigma_1^2}{\bar{\sigma}_1^2}\frac{\lambda_1}{\lambda^1}<1$, which guarantees that we have the correct sign to have sufficient decay in $y$.
	 \\
	 \underline{Case 2:}
	 The same computation as in \eqref{equation:arnxy_exp}, now in the case of $t_1<r<t^1,$ $f_{r,n}= C_f(\mathcal{I}_{\sigma^2}(\sfrac{r}{n},\lambda^1))^{\sfrac{2}{3}}$ and $x\in I^y_n(1)$, yields
	 \begin{align}\label{eq:4.56}
	 	A^y_{r,n,x}&\leq  \frac{2 f_{r,n}}{\sqrt{\mathcal{I}_{\sigma^2} (\sfrac{r}{n},\lambda^1)n}}\exp\left[-\frac{1}{2}\frac{(M_N^{*}(t^1)+y-s_{r,n}(M_N^{*}(t^1)+y)-f_{r,n})^2}{\mathcal{I}_{\sigma^2}(\sfrac{r}{n},\lambda^1)t^1}\right]\nonumber\\ &\leq C 2^{-2t^1\frac{\mathcal{I}_{\sigma^2}(\sfrac{r}{n},\lambda^1)}{\mathcal{I}_{\sigma^2}(\lambda^1)}} (\mathcal{I}_{\sigma^2}(\sfrac{r}{n},\lambda^1)n)^{\sfrac{1}{6}}\exp\left[\log(t^1)\frac{\mathcal{I}_{\sigma^2}(\sfrac{r}{n},\lambda^1)}{2\mathcal{I}_{\sigma^2}(\lambda^1)}+\frac{C_f(\mathcal{I}_{\sigma^2}(\sfrac{r}{n},\lambda^1)n)^{\sfrac{2}{3}}}{(4\log(2))^{-\sfrac{1}{2}}\bar{\sigma}_1}\right]
	 	\nonumber\\
	 	&\qquad \times \exp\left[-y\frac{2\sqrt{\log(2)}}{\bar{\sigma}_1}\frac{\mathcal{I}_{\sigma^2}(\sfrac{r}{n},\lambda^1)}{\mathcal{I}_{\sigma^2}(\lambda^1)}-\log(t^1)\frac{\mathcal{I}_{\sigma^2}(\sfrac{r}{n},\lambda^1)}{2t^1\mathcal{I}_{\sigma^2}(\lambda^1)} \left(\frac{\log(t^1)-4\sqrt{\log(2)}y}{16 \log(2)}\right)\right].
	 \end{align}
	 As $y\in [0,\sqrt{\log N}]$, $\left(\frac{\log(t^1)-4\sqrt{\log(2)}y}{16 \log(2)}\right)\geq 0$. Moreover, for $t_1<r <t^1$,
	 \begin{align}
	 t^1\frac{\mathcal{I}_{\sigma^2}(\sfrac{r}{n},\lambda^1)}{\mathcal{I}_{\sigma^2}(\lambda^1)}= \frac{  \frac{1}{\lambda^1-\sfrac{r}{n}}\mathcal{I}_{\sigma^2}(\sfrac{r}{n},\lambda^1)}{\frac{1}{\lambda^1}\mathcal{I}_{\sigma^2}(\lambda^1)}(t^1-r)\geq \eta_2(t^1-r),
	 \end{align}
	 for a constant $\eta_2>1$ that is independent of $r$ and $n$. Using these facts in \eqref{eq:4.56},
	 we obtain
	 \begin{align}\label{equation:5.46}
	 	A_{r,n,x}^y\leq& C 2^{-\eta_2(t^1-r)} (\mathcal{I}_{\sigma^2}(\sfrac{r}{n},\lambda^1)n)^{\sfrac{2}{3}} \exp\left[C_f(\mathcal{I}_{\sigma^2}(\sfrac{r}{n},\lambda^1)n)^{\sfrac{2}{3}}2\sqrt{\log(2)}-y\left(\frac{2 \sqrt{\log(2)}}{\bar{\sigma}_1}\frac{\mathcal{I}_{\sigma^2}(\sfrac{r}{n},\lambda^1)}{\mathcal{I}_{\sigma^2}(\lambda^1)}\right)\right]\nonumber\\
	 	\leq & C 2^{-2 \eta_2 (t^1-r)+o(t^1-r)}\exp\left[-y\left(\frac{2 \sqrt{\log(2)}}{\bar{\sigma}_1}\frac{\mathcal{I}_{\sigma^2}(\sfrac{r}{n},\lambda^1)}{\mathcal{I}_{\sigma^2}(\lambda^1)}\right)\right] .
	 \end{align}
	 Combining the bounds in \eqref{equation:5.451} and \eqref{equation:5.46} and observing that both $(1-\eta_1)>0$ and $(1-\eta_2)<0$ hold and using $y\geq 0$, allows us to bound the sum in \eqref{equation:5.39} by an absolute constant $C_2>0$, i.e.
	 \begin{align}\label{eq:4.57}
	 	\sum_{r=1}^{n-1} 2^{2(t^1-r)} \max_{x \in I^y_n(1)}A^y_{r,n,x} \leq  C\left[\sum_{r=1}^{t_1}2^{-2r(1-\eta_1)+o(r)}+\sum_{r=t_1 + 1}^{t^1-1}2^{2(1-\eta_2)(t^1-r)+o(t^1-r)}\right] \leq C_2.
	 \end{align}
	 Inserting \eqref{eq:4.57} into \eqref{equation:5.37} concludes the proof.
\end{proof}
\begin{proof}[Proof of \autoref{lemma:right_tail_lb_MIBRW}]
	Combining \autoref{lemma:lb_1st_moment_hny} with \autoref{lemma:up_2nd_moment_hny} shows that there are constants, $\tilde{C}, C,c>0,$ such that
	\begin{align}
		\mathbbm{P}\left(\max_{v \in V_N} S_v^N > M_N^{*}+y\right)&\geq \mathbbm{P}(h_N(y)\geq 1) \geq \frac{\left(\mathbbm{E}\left[h_N(y)\right]\right)^2}{\mathbbm{E}\left[h_N^2(y)\right]}\geq \frac{\mathbbm{E}\left[h_N(y)\right]^2}{\mathbbm{E}\left[h_N(y)\right]^2+(1+\tilde{C})\mathbbm{E}\left[h_N(y)\right])} \nonumber
		\\ &\geq \frac{\mathbbm{E}\left[h_N(y)\right]}{1+c}\geq C e^{-y \frac{2\sqrt{\log(2)}}{\bar{\sigma}_1}} .
	\end{align}
\end{proof}
The goal in the following is to provide an upper bound on the left tail of the centred maximum of the $(\sigma,\lambda)-$DGFF. We start with a bound on the left tail of $S_N^{*}-M_N^{*}$.
\begin{lemma}\label{lemma:up_bound_SN_left_tail}
	There exist constants $C,c>0$, such that, for all $N\in \mathbbm{N},$ and $0\leq \lambda\leq (\log \log N)^{\sfrac{2}{3}}$,
	\begin{align}
	\mathbbm{P}\left(\max_{v\in V_N}S_v^N \leq M_N^{*}-\lambda\right)\leq C e^{-c\lambda}.
	\end{align}
\end{lemma}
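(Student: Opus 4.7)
The plan is to prove the bound via a sprinkling argument that combines the right-tail lower bound of \autoref{lemma:right_tail_lb_MIBRW} with Gaussian concentration. The hierarchical construction in \eqref{equation:def_MIBRW} splits naturally into scales, so I would partition $V_N$ into $K=2^{2j}$ disjoint sub-boxes $\{B_i\}_{i=1}^K\subset\mathcal{BD}_{n-j}$ of side length $2^{n-j}$, for an integer $j=j(\lambda)$ of order $\lambda$ to be optimised, and decompose
\begin{align*}
	S^N_v = \hat S^{(j)}_v + T^{(j)}_v,\qquad \hat S^{(j)}_v \coloneqq \sum_{k=0}^{n-j}\sum_{B\in\mathcal B_k^N(v)}2^{-k}\sigma\!\bigl(\tfrac{n-k}{n}\bigr)b^N_{k,B},
\end{align*}
so that $\hat S^{(j)}$ and $T^{(j)}$ are independent, the restrictions $\hat S^{(j)}|_{B_i}$ are mutually independent in $i$ (disjoint Gaussian ensembles), and each such restriction is distributed as a MIBRW on $V_{N/2^j}$ with the same variance profile, up to the toroidal identification $\sim_N$.

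Applying \autoref{lemma:right_tail_lb_MIBRW} with $y=0$ to each sub-box and using independence across sub-boxes gives a universal $c>0$ with
\begin{align*}
	\mathbbm{P}\Bigl(\max_{i\le K}\max_{v\in B_i}\hat S^{(j)}_v\ge M^*_{N/2^j}\Bigr)\ge 1-(1-c)^{K}\ge 1-e^{-c_1\lambda},
\end{align*}
since $K=2^{2j}$ is of order $\exp(c\lambda)$. In parallel, the coarse term $T^{(j)}$ is approximately constant on each $B_i$ (its Gaussians live at scales $k>n-j$), so the vector $(T^{(j)}_{v_i})_{i\le K}$ for fixed representatives $v_i\in B_i$ behaves like a centred log-correlated field on a coarsened lattice of $K$ sites. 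From \autoref{lemma:cov_comp} one reads off that $\mathbbm{E}[\max_i T^{(j)}_{v_i}]=M^*_N-M^*_{N/2^j}+O(1)$, and Borell's inequality (\autoref{theorem:borell}) applied with per-site variance $\mathrm{Var}[T^{(j)}_v]\lesssim j\lesssim \lambda$ yields
\begin{align*}
	\mathbbm{P}\Bigl(\max_i T^{(j)}_{v_i}\le M^*_N-M^*_{N/2^j}-\lambda/2\Bigr)\le \exp(-c\lambda^2/j)\le e^{-c_2\lambda}.
\end{align*}

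To conclude, I would combine the two estimates using the independence of $\hat S^{(j)}$ and $T^{(j)}$: conditional on $T^{(j)}$, the bound from \autoref{lemma:right_tail_lb_MIBRW} applies uniformly to the sub-box $i^\star$ that maximises $T^{(j)}_{v_i}$, so with probability at least $1-2e^{-\min(c_1,c_2)\lambda}$ one has $\max_v S^N_v\ge M^*_{N/2^j}+(M^*_N-M^*_{N/2^j})-\lambda/2-O(1)\ge M^*_N-\lambda$, after absorbing $O(1)$ into $C$. The restriction $\lambda\le(\log\log N)^{\sfrac{2}{3}}$ enters precisely to ensure $j=O(\lambda)\ll n$, so that the Borell bound $e^{-c\lambda^2/j}$ beats $e^{-c\lambda}$ and the sub-box centering $M^*_{N/2^j}$ inherits the correct sub-leading corrections from $M^*_N$. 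The main obstacle I anticipate is the $O(1)$ matching between $M^*_N-M^*_{N/2^j}$ and $\mathbbm{E}[\max_i T^{(j)}_{v_i}]$: the logarithmic corrections in $m_N$ depend on the weights $w_j$ determined by whether $\mathcal{I}_{\sigma^2}$ coincides with $\mathcal{I}_{\bar\sigma^2}$ on each interval $(\lambda^{j-1},\lambda^j]$, and the fine/coarse split must be chosen so these corrections are preserved between scales, which is precisely the place where the pairing with \autoref{lemma:right_tail_lb_MIBRW} closes the argument.
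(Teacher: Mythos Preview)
Your outline has the right sprinkling shape, but two steps do not go through as written and constitute a real gap.

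\textbf{Independence of the fine field across sub-boxes fails.} In the MIBRW \eqref{equation:def_MIBRW} the Gaussian $b_{k,B}^N$ is attached to every box $B\in\mathcal{B}_k^N$ of side $2^k$ containing $v$, not to a single dyadic ancestor. Hence for $k\le n-j$ and two vertices $v\in B_i$, $w\in B_{i'}$ in adjacent sub-boxes with $d_\infty^N(v,w)<2^k$, the families $\mathcal{B}_k^N(v)$ and $\mathcal{B}_k^N(w)$ share many boxes. The restrictions $\hat S^{(j)}|_{B_i}$ therefore use overlapping Gaussian ensembles and are \emph{not} independent; you cannot multiply the sub-box probabilities. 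Relatedly, $\hat S^{(j)}|_{B_i}$ is not distributed as a MIBRW on $V_{N/2^j}$: the toroidal identification is $\sim_N$, not $\sim_{N/2^j}$, and the speed function is $\sigma((n-k)/n)$ rather than $\sigma(((n-j)-k)/(n-j))$, so \autoref{lemma:right_tail_lb_MIBRW} does not apply directly to $\hat S^{(j)}|_{B_i}$.

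\textbf{How the paper repairs this.} The paper avoids both issues by abandoning the exact fine/coarse split. It places boxes of side $N'$ on a $3N'$-grid so that they are $2N'$-separated, takes on each box an \emph{independent copy} of the genuine MIBRW $S^{N'}$ (to which \autoref{lemma:right_tail_lb_MIBRW} applies verbatim), and adds a single common Gaussian $X$ so that variances match $S^N$. Slepian's lemma then gives $\mathbb{P}(\max_{V_N}S^N_v\le t)\le \mathbb{P}(\max_{\cup B}(S^{N'}_v+X)\le t)$, after which independence across boxes and a Gaussian tail for $X$ yield the bound $(1-\delta_0)^{|\mathcal{B}|}+\mathbb{P}(X\le -\lambda/2)\le Ce^{-c\lambda}$. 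Your coarse-field estimate $\mathbb{E}[\max_i T^{(j)}_{v_i}]=M_N^*-M_{N/2^j}^*+O(1)$, which you flag as the main obstacle, is thereby replaced by an elementary one-line bound on $N'$ that gives $M_N^*-M_{N'}^*\le \lambda'-\beta$ directly.
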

\begin{proof}
	By \autoref{lemma:right_tail_lb_MIBRW}, there are $\beta>0$ and $\delta_0 \in (0,1)$ such that, for all $N\in \mathbbm{N}$,
	\begin{align}\label{equation:lb_SN_delta}
	\mathbbm{P}\left(\max_{v \in V_N}S_v^N\geq m_N/\sqrt{\log(2)}-\beta\right)\geq \delta_0.
	\end{align}
	In particular, there is a $\kappa>0$ such that, for all $N\geq N^{'} \geq 4$,
	\begin{align} \label{eq:8.70}
	2\sqrt{\log(2)}\mathcal{I}_{\bar{\sigma}}(1)\log\left(\frac{N}{N^{'}}\right)- \frac{3}{4\sqrt{\log(2)}} \sum_{j=1}^{m} \bar{\sigma}_j \log\left(\log\left(\frac{N}{N^{'}}\right)\right)- \kappa &\leq M_N^{*}-M_{N^{'}}^{*}\\ &\leq 2\sqrt{\log(2)}\mathcal{I}_{\bar{\sigma}}(1)\log\left(\frac{N}{N^{'}}\right)+\kappa.\nonumber
	\end{align}
	We now pick $\lambda^{'}=\frac{\lambda}{2}$, $N^{'}=N\exp\left[-\frac{1}{2\sqrt{\log(2
			)}\mathcal{I}_{\bar{\sigma}}(1)}(\lambda^{'} - \beta -\kappa-4)\right]$ and set $n^\prime=\log_2 N^\prime$. With this choice, we deduce from \eqref{eq:8.70} that $M_N-M_{N^{'}}\leq \lambda^{'}-\beta.$ We divide $V_N$ into disjoint boxes by placing at each position $(3iN^{'},3jN^{'})$ a box of size $N^{'}$, for $1\leq i,j \leq \frac{N}{N^{'}}$. We call this collection of boxes $\mathcal{B}$ and note that the pairwise distances between two boxes are at least $2N^{'}$. This implies independence of the processes $\{S^{N^{'}}_v\}_{v\in B}$ on pairwise disjoint boxes. This allows us to bound the number of boxes $B\in \mathcal{B}$ from below by
		\begin{align}\label{equation:lb_B}
			\frac{N}{3 N^{\prime}}\geq \frac{1}{3}\exp\left[\frac{1}{2\sqrt{\log(2)} \mathcal{I}_{\bar{\sigma}}(1)}(\lambda^{'}-\beta -\kappa-4)\right].
		\end{align}
	Let  $\tilde{S}_v^N=S_v^{N^{'}}+X$, for $v\in B$ and $B\in \mathcal{B}$, where $X \sim \mathcal{N}(0, s^2)$ is an independent random variable and with $s^2$ such that $\mathrm{Var}(S_v^N)=\mathrm{Var}(\tilde{S}_v^{N})$.
	For $u,v \in \bigcup_{B \in \mathcal{B}}B$, we then have
	\begin{align}
	&\mathbbm{E}\left[(\tilde{S}_u^N-\tilde{S}_v^N)^2\right]=\mathbbm{E}\left[(S_u^{N^{'}}-S_v^{N^{'}})^2\right]\leq \mathbbm{E}\left[(S_u^N-S_v^N)^2\right].
	\end{align}
	An application of Slepian's Lemma gives that, for any $t\in \mathbbm{R}$,
	\begin{align}\label{equation:5.51}
	\mathbbm{P}\left(\max_{v \in V_N}S_v^N\leq t\right)\leq \mathbbm{P}\left(\max_{v \in \cup_{B \in \mathcal{B}}B}S_v^N\leq t\right)\leq \mathbbm{P}\left(\max_{v \in \cup_{B \in \mathcal{B}}B}\tilde{S}_v^N \leq t\right).
	\end{align}
	Using $M_N^{*}-\lambda^{'}\leq  M_{N^{'}}^{*}-\beta$ and \eqref{equation:lb_SN_delta}, one obtains, for each $B \in \mathcal{B}$,
	\begin{align}\label{equation:5.48}
	\mathbbm{P}\left(\max_{v \in B}S_v^{N^{'}}\geq M_N^{*}-\lambda^{'}\right)\geq \mathbbm{P}\left(\max_{v \in B}S_v^{N^{'}}\geq M_{N^{'}}^{*}-\beta\right)\geq \delta_0.
	\end{align}
	By \eqref{equation:5.48} and the independence of $\{S^{N^\prime}_v\}_{v\in B}$ and $\{S^{N^\prime}_v\}_{v\in B^\prime}$, for different $B, B^\prime \in \mathcal{B}$,
	\begin{align}\label{equation:5.50}
	\mathbbm{P}\left(\max_{v \in \cup_{B \in \mathcal{B}}B}S_v^{N^{'}}<M_N^{*}-\lambda^{'}\right)\leq (1-\delta_0)^{|\mathcal{B}|}.
	\end{align}
	As $\delta_0 \in (0,1)$, by \eqref{equation:lb_B}, there are constants, $C,c>0,$ such that
	\begin{align}
		(1-\delta_0)^{|\mathcal{B}|}\leq \exp\left[\frac{\log(1-\delta_0)}{3}\exp\left(\frac{1}{2\sqrt{\log(2)} \mathcal{I}_{\bar{\sigma}}(1)}(\lambda^{'}-\beta -\kappa-4)\right)\right]   \leq C e^{-c\lambda^{'}}. \label{eq:8.80}
	\end{align}
	Using \eqref{equation:5.51}, we can bound $\mathbbm{P}\left(\max_{v \in V_N}S_v^N\leq M^{*}_N-\lambda\right)$ from above by
	\begin{align}
	\mathbbm{P}\left(\max_{v \in \cup_{B \in \mathcal{B}}B}S_v^{N^{'}}<M_N^{*}-\lambda^{'}\right)+\mathbbm{P}(\theta \leq - \lambda^{'}) \leq C e^{-c\lambda^{'}},
	\end{align}
	where the last bound follows from \eqref{eq:8.80} and a Gaussian tail bound.
\end{proof}
 \autoref{lemma:up_bound_SN_left_tail} allows us to deduce the upper bound on the left tail of the centred maximum.
\begin{lemma}\label{lemma:up_bound_psiN_left_tail}
There exist constants, $C,c>0$, so that, for all $N\in \mathbbm{N},$ and $0\leq \lambda \leq (\log\log N)^{\sfrac{2}{3}}$,
	\begin{align}\label{equation:5.56}
		\mathbbm{P}\left(\max_{v \in V_N}\psi_v^N\leq m_N-\lambda\right)\leq Ce^{-c\lambda}.
	\end{align}
\end{lemma}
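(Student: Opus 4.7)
The strategy is to reduce \eqref{equation:5.56} to the MIBRW left-tail estimate of \autoref{lemma:up_bound_SN_left_tail} via the Slepian-type comparison already worked out in \autoref{lemma:5.3}. That lemma furnishes, on the sub-box $(\sfrac{N}{4},\sfrac{N}{4})+2^{\kappa-3}V_{2^{-\kappa}N}\subset V_N$, an independent standard Gaussian $X$ and a family $\{a_v\}_{v\in V_{2^{-\kappa}N}}$ with $|a_u-a_v|\leq \sqrt{C}$, together with the variance matching \eqref{equation:8.22} and the increment inequality \eqref{equation:8.19}. Under equal variances, an increment inequality translates directly into the reversed covariance inequality, so Slepian's lemma applied in its left-tail form — combined with the trivial monotonicity $\max_{V_N}\psi^N\geq \max_{\mathrm{sub\text{-}box}}\psi^N$ — yields
\begin{align}
\mathbb{P}\left(\max_{v\in V_N}\psi^N_v \leq m_N - \lambda\right) \leq \mathbb{P}\left(\sqrt{\log(2)}\max_{v\in V_{2^{-\kappa}N}}\left(S^{2^{-\kappa}N}_v+a_v X\right) \leq m_N - \lambda\right).
\end{align}

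To decouple the auxiliary Gaussian $X$, fix a reference vertex $v_0$ and write $a_v = a_{v_0}+(a_v-a_{v_0})$ with $|a_v-a_{v_0}|\leq \sqrt{C}$, so that $\max_v(S^{2^{-\kappa}N}_v + a_v X) \geq \max_v S^{2^{-\kappa}N}_v + a_{v_0}X - \sqrt{C}\,|X|$. Combined with $m_N = \sqrt{\log(2)}\,M^*_N$ and the deterministic estimate $M^*_N - M^*_{2^{-\kappa}N} = O_\kappa(1)$ that follows from \eqref{eq:8.70}, the event on the right above is contained, for $\lambda$ exceeding an absolute constant that absorbs the $\kappa$-dependent shift, in
\begin{align}
\left\{\max_{v\in V_{2^{-\kappa}N}} S^{2^{-\kappa}N}_v \leq M^*_{2^{-\kappa}N} - \frac{\lambda}{2\sqrt{\log(2)}} + K|X|\right\}, \qquad K \coloneqq \sqrt{C}+|a_{v_0}|.
\end{align}

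Finally, I condition on $X$ and split at $|X|\leq \tau \coloneqq \lambda/(4K\sqrt{\log(2)})$: on $\{|X|\leq \tau\}$ the above event forces $\max S^{2^{-\kappa}N} \leq M^*_{2^{-\kappa}N}-\lambda/(4\sqrt{\log(2)})$, and \autoref{lemma:up_bound_SN_left_tail} applied on $V_{2^{-\kappa}N}$ bounds this conditional probability by $Ce^{-c\lambda}$ — its hypothesis $\lambda/(4\sqrt{\log(2)})\leq (\log\log(2^{-\kappa}N))^{\sfrac{2}{3}}$ is inherited from $\lambda\leq(\log\log N)^{\sfrac{2}{3}}$ since $\kappa$ is fixed — while on $\{|X|>\tau\}$ a Gaussian tail bound contributes $2e^{-c'\lambda^2}$, which is absorbed into $Ce^{-c\lambda}$. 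The small-$\lambda$ regime is trivial by enlarging $C$. The main obstacle is purely bookkeeping: reusing Slepian's lemma for the left tail requires no new covariance input beyond what \autoref{lemma:5.3} already provides, but the auxiliary Gaussian $X$ and the shift $M^*_N - M^*_{2^{-\kappa}N}$ must be tracked carefully through the conditioning step.
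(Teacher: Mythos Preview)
Your argument is correct and is in fact more streamlined than the paper's. You observe that the Slepian comparison established in \autoref{lemma:5.3} (equal variances via \eqref{equation:8.22}, increment inequality \eqref{equation:8.19}) can be read off directly as a left-tail bound: smaller covariances for $\psi^N$ on the sub-box mean $\mathbb{P}(\max\psi^N\leq t)\leq \mathbb{P}(\sqrt{\log 2}\max(S^{2^{-\kappa}N}+aX)\leq t)$, and then you decouple the auxiliary Gaussian $X$ by conditioning. One point worth making explicit is that $|a_{v_0}|$ is itself $O_\kappa(1)$ (from \eqref{equation:8.22} and the variance estimates in \autoref{lemma:cov_comp}, $a_v^2=\kappa+O(1)$), so $K$ is a genuine constant; you use this implicitly but never state it.

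The paper proceeds differently: rather than reusing \autoref{lemma:5.3}, it enlarges $V_N$ to $V_{2^{\kappa+2}N}$, sets up a fresh Slepian comparison between $\psi^{2^{\kappa+2}N}$ and $S^{2^{2\kappa}N}$ (now with the covariance inequality going the other way, $\mathbb{E}[\psi\psi]\leq\log(2)\,\mathbb{E}[SS]$), and then needs the FKG inequality to pass from $\max$ over $V_{2^{2\kappa}N}$ back to $\max$ over $V_N$ via a tiling by $2^{8\kappa}$ translates. Your route avoids both the box enlargement and the FKG step entirely, at the modest cost of the $|X|$-splitting argument. The trade-off is that your bound on the complementary event $\{|X|>\tau\}$ is $e^{-c'\lambda^2}$, which is harmlessly absorbed, whereas the paper keeps everything at the $e^{-c\lambda}$ scale throughout; neither has any bearing on the final statement.
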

	\begin{proof}
	Following the proof of \autoref{lemma:5.2}, we see that, instead of $\mathbbm{P}\left(\max_{v \in V_N}\psi_v^N\leq  m_N-\lambda\right)$, it suffices to bound $\mathbb{P}\left(\max_{v \in 2^\kappa V_N} \psi_{v+(2^{\kappa+1}N,2^{\kappa+1}N)}^{2^{\kappa+2}N}\leq m_N-\lambda \right)$.
	By \autoref{lemma:cov_comp} $iv.$, there is a constant $\kappa_0>0$, such that, for all $\kappa\geq \kappa_0$,
	\begin{align}
	\mathrm{Var}\left[\psi_{2^{\kappa}v+(2^{\kappa+1}N,2^{\kappa+1}N)}^{2^{\kappa+2}N}\right]\leq \log(2) \mathrm{Var}\left[S_v^{2^{2\kappa}N}\right],\quad\forall v \in V_N.
	\end{align}
	Therefore, we can choose a collection of positive numbers, $\{a_v: \,v \in V_N\},$ and an independent standard Gaussian random variable, $X,$ so that, for any $N$ and $u,v \in V_N$,
	\begin{align}\label{equation:8.89}
	\mathrm{Var}\left[\psi_{2^{\kappa}v+(2^{\kappa +1}N,2^{\kappa +1}N)}^{2^{\kappa+2}N}+a_v X\right]= \log(2) \mathrm{Var}\left[S_v^{2^{2\kappa}N}\right],\quad \forall v \in V_N.
	\end{align} 
	As $\mathrm{Var}\left[S_v^{2^{2\kappa}N}\right]=\mathrm{Var}\left[S_v^{2^{2\kappa}N}\right]$, for all $v,w\in V_{2^{2\kappa}N}$, and by the uniform bound in \autoref{lemma:cov_comp} $ii.$, there is a constant $C_1>0$, such that
	\begin{align}
	|a_u-a_v|\leq C_1.
	\end{align}
	Writing $\tilde{u}=2^{\kappa}u+(2^{\kappa+1}N,2^{\kappa+1}N)$ and using \autoref{lemma:cov_comp} $ii$ and $iv.$, we get
	\begin{align} \label{equation:8.97}
		\mathbbm{E}\left[\psi_{\tilde{u}}^{2^{\kappa+2}N}\psi_{\tilde{v}}^{2^{\kappa+2}N}\right]\geq& \log(2)(n+\kappa) \mathcal{I}_{\sigma^2}\left(\frac{n+\kappa-\log_{+}\|2^\kappa u-2^\kappa v\|_2}{n+\kappa}\right) -c\nonumber
		\\ =&\log(2)(n+\kappa)\mathcal{I}_{\sigma^2}\left(\frac{n-\log_{+}\|u-v\|_2}{n+\kappa}\right) -c,
	\end{align}
	where $c>0$ is a constant.
	Further, taking into account that the Euclidean distance on the torus is bounded by the usual Euclidean distance, we have by \autoref{lemma:cov_comp} $ii.$,
	\begin{align}\label{equation:8.98}
		\mathbbm{E}\left[S_u^{2^{\kappa}N}S_v^{2^{\kappa}N}\right]\leq & (n+2\kappa)\mathcal{I}_{\sigma^2}\left(\frac{n+2\kappa-\log_{+}\|u-v\|_2}{n+2\kappa}\right) +C,
	\end{align}
	where $C>0$ is another constant.
	Comparing \eqref{equation:8.97} and \eqref{equation:8.98}, one deduces, using \eqref{equation:8.89} that there is a $\kappa_0$, such that, for $\kappa\geq \kappa_0$,
	\begin{align}
	\mathbbm{E}\left[\left(\psi_{2^{\kappa}u+(2^{\kappa +1}N,2^{\kappa +1}N)}^{2^{\kappa+2}N}+a_u X\right)\left(\psi_{2^{\kappa}v+(2^{\kappa +1}N,2^{\kappa +1}N)}^{2^{\kappa+2}N}+a_v X\right)\right] \leq \log(2) \mathbbm{E}\left[S_u^{2^{\kappa}N}S_v^{2^{\kappa}N}\right]. \label{equation:8.93}
	\end{align}
	Using \eqref{equation:8.93} and \eqref{equation:8.89}, we can apply Slepian's lemma to obtain
	\begin{align}\label{equation:5.63}
	&\,\mathbbm{P}\left(\max_{v\in V_N} \psi_{2^{\kappa}v+(2^{\kappa +1}N,2^{\kappa +1}N)}^{2^{\kappa+2}N}\leq m_N-\lambda\right)\nonumber\\\leq &\,\mathbbm{P}\left(\max_{v \in V_N} \psi_{2^{\kappa}v+(2^{\kappa +1}N,2^{\kappa +1}N)}^{2^{\kappa+2}N} +a_v X\leq m_N-\frac{\lambda}{2}\right)+ \mathbbm{P}\left(X\leq - \frac{\lambda}{C_{\kappa}}\right)\nonumber\\\leq &\,\mathbbm{P}\left(\max_{v \in V_N}S_v^{2^{2\kappa}N}\leq M_N^{*}-\frac{\lambda}{2\sqrt{\log(2)}}\right)+ \mathbbm{P}\left(X\leq - \frac{\lambda}{C_{\kappa}}\right),
	\end{align}
	where $C_{\kappa}>0$ is a constant that solely depends on $\kappa$. Note that there is a collection of boxes $\mathcal{V}$, consisting of at most $2^{8 \kappa}$ translated copies of $V_N$, such that $V_{2^{2\kappa}N}\subset \cup_{V \in \mathcal{V}} V$. Since
	\begin{equation}
		\left\{\max_{v \in V_{2^{2\kappa}N}} S^{2^{\kappa}N}_v \leq M_N^{*}- x \right\}= \cap_{V \in \mathcal{V}} \left\{\max_{v \in V_N} S^{2^{\kappa}N}_v\leq M_N^{*}-x \right\},
	\end{equation}  
	we have, by the FKG inequality \cite[Proposition~1]{fortuin1971}, that
	\begin{align}\label{equation:5.66}
		\mathbbm{P}\left(\max_{v \in V_{2^{2\kappa}N}}S_v^{2^{2\kappa}N}\leq M_N^{*}-\frac{\lambda}{2\sqrt{\log(2)}}\right)  \geq \left(\mathbbm{P}\left(\max_{v \in V_N}S_v^{2^{2\kappa}N}\leq M_N^{*}-\frac{\lambda}{2\sqrt{\log(2)}}\right)\right)^{8 \kappa}.
	\end{align}
	Using \eqref{equation:5.66} and then \autoref{lemma:up_bound_SN_left_tail}, we bound \eqref{equation:5.63} from above by
	\begin{align}
		\mathbbm{P}\left(\max_{v\in V_{2^{\kappa}N}} \psi_{2^{\kappa}v+(2^{\kappa +1}N,2^{\kappa +1}N)}^{2^{\kappa+2}N}\leq m_N-\lambda\right) &\leq  \mathbbm{P}\left(\max_{v\in V_N} \psi_{2^{\kappa}v+(2^{\kappa +1}N,2^{\kappa +1}N)}^{2^{\kappa+2}N}\leq m_N-\lambda\right)\nonumber \\
		&  \leq \mathbbm{P}\left(\max_{v \in V_N}S_v^{2^{2\kappa}N}\leq M_N^{*}-\frac{\lambda}{2\sqrt{\log(2)}}\right)+ \mathbbm{P}\left(X\leq - \frac{\lambda}{C_{\kappa}}\right) \nonumber\\ 
		&\leq \left(\mathbbm{P}\left(\max_{v \in V_{2^{2\kappa}N}}S_v^{2^{2\kappa}N}\leq M_N^{*}-\frac{\lambda}{2\sqrt{\log(2)}}\right)\right)^{1/ (8\kappa)} \nonumber\\&\quad + \mathbbm{P}\left(X\leq - \frac{\lambda}{C_{\kappa}}\right) \leq \tilde{C}e^{-\tilde{c}\lambda},
	\end{align}
	where $\tilde{C},\tilde{c}>0$ are constants that are independent of $N$. This concludes the proof of \autoref{lemma:up_bound_psiN_left_tail}.
	\end{proof}
We now have all the ingredients to finish the proof of \autoref{thm1}.
\begin{proof}[Proof of \autoref{thm1}]
	The upper bound on the right-tail in \eqref{eq:thm1_right_tail} follows using \autoref{proposition:up_bound_right_tail}. A combination of \autoref{lemma:5.3} with \autoref{lemma:right_tail_lb_MIBRW} implies the lower bound on the right-tail in \eqref{eq:thm1_right_tail}.
	The second statement, the upper bound for the left tail \eqref{eq:thm1_left_tail}, is given by \autoref{lemma:up_bound_psiN_left_tail}, which finishes the proof.
\end{proof}

\appendix
\section{Gaussian comparison}\label{appendix:A}
\begin{thm}[Borell's inequality, {\cite[Lemma~3.1]{MR2814399}}]\label{theorem:borell}
	Let $T$ be compact and $\{ X_t \}_{t\in T}$ a centred Gaussian process on $T$ with continuous covariance. Further assume that almost surely, $X^* \coloneqq \sup_{t \in T} X_t < \infty$. Then, 
	\begin{align}
	&\mathbbm{E}[X^*]< \infty,
	\end{align}
	and
	\begin{align}
	&\mathbbm{P}\left(\left|X^*-\mathbbm{E}[X^*]\right|>x\right)\leq 2 e^{-\sfrac{x^2}{2 \sigma_T^2}},
	\end{align}
	where $\sigma_T^2 \coloneqq \max_{t \in T} \mathbbm{E}[X_t^2]$.
\end{thm}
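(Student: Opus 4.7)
The plan is to obtain Borell's inequality by reducing the bound on $X^*$ to the classical Gaussian concentration inequality for Lipschitz functions of a standard Gaussian vector, using separability of $T$ to approximate the supremum by finite maxima. Once a uniform-in-$n$ concentration estimate is in hand for the finite-dimensional maxima, both the finiteness of $\mathbb{E}[X^*]$ and the stated tail bound will follow by a passage to the limit.

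Concretely, I would first invoke continuity of the covariance $K(s,t)=\mathbb{E}[X_s X_t]$ on compact $T$ to choose a countable dense subset $T_0=\{t_1,t_2,\dots\}\subset T$ and pass to a separable version so that $X^{*}=\sup_n X_{t_n}$ a.s. Writing $X_n^{*}=\max_{k\leq n}X_{t_k}$, one has $X_n^{*}\nearrow X^{*}$ a.s. For each $n$, factor the covariance matrix $\Sigma_n=(K(t_i,t_j))_{i,j\leq n}$ as $\Sigma_n=A_n A_n^T$, so that $(X_{t_1},\dots,X_{t_n})\overset{d}{=}A_n Z$ with $Z\sim\mathcal{N}(0,I_n)$. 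The map $F_n(z):=\max_{i\leq n}(A_n z)_i$ is $\sigma_T$-Lipschitz: choosing the maximising index $i^{*}$ at $z$,
\begin{align}
F_n(z)-F_n(z') \leq \langle A_n^T e_{i^{*}}, z-z' \rangle \leq \sqrt{(A_n A_n^T)_{i^{*} i^{*}}}\,\|z-z'\| \leq \sigma_T \|z-z'\|,
\end{align}
and the reverse inequality follows by swapping $z,z'$.

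The analytic core is the Gaussian concentration inequality for Lipschitz functions: if $F:\mathbb{R}^n\to\mathbb{R}$ is $L$-Lipschitz and $Z\sim\mathcal{N}(0,I_n)$, then $\mathbb{P}(|F(Z)-\mathbb{E}[F(Z)]|>x)\leq 2e^{-x^2/(2L^2)}$. I would derive this by Herbst's entropy method: Gross's logarithmic Sobolev inequality for the standard Gaussian bounds the entropy of $e^{tF}$ by $2t^2\,\mathbb{E}[|\nabla F|^2 e^{tF}] \leq 2L^2 t^2\,\mathbb{E}[e^{tF}]$, and integration of the resulting differential inequality in $t$ yields the sub-Gaussian Laplace transform bound $\mathbb{E}[e^{t(F-\mathbb{E}[F])}]\leq e^{L^2 t^2/2}$; a Chebyshev-Markov step then produces the tail. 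Applied to $F_n$ with $L=\sigma_T$, this gives
\begin{align}
\mathbb{P}(|X_n^{*}-\mathbb{E}[X_n^{*}]|>x)\leq 2 e^{-x^2/(2\sigma_T^2)}
\end{align}
uniformly in $n$.

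To finish, I would pass to the limit $n\to\infty$. The uniform bound above makes $\{X_n^{*}-\mathbb{E}[X_n^{*}]\}_n$ uniformly integrable, and combined with $X_n^{*}\nearrow X^{*}$ a.s.\ and $\mathbb{P}(X^{*}<\infty)=1$ this yields $\mathbb{E}[X^{*}]<\infty$ and $\mathbb{E}[X_n^{*}]\to\mathbb{E}[X^{*}]$; the concentration estimate then descends to $X^{*}$ by Fatou. The main obstacle in this plan is the Gaussian concentration inequality itself, which genuinely requires a nontrivial tool (log-Sobolev on Gauss space, or equivalently the Gaussian isoperimetric inequality of Borell-Sudakov-Tsirelson); the separability reduction, the Lipschitz computation, and the limit argument are essentially bookkeeping around that core.
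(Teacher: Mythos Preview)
The paper does not prove this statement at all: Borell's inequality is simply quoted from the literature (as \cite[Lemma~3.1]{MR2814399}) and used as a black box, so there is no ``paper's own proof'' to compare against. Your outline is a correct and standard route to the result: reduce to finite maxima via separability, observe that the coordinate maximum is $\sigma_T$-Lipschitz as a function of a standard Gaussian vector, invoke Gaussian concentration (Herbst from log-Sobolev, or equivalently Borell--Sudakov--Tsirelson isoperimetry), and pass to the limit.

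One small point worth tightening in your limit step: uniform integrability of $X_n^{*}-\mathbb{E}[X_n^{*}]$ by itself does not show that $\sup_n \mathbb{E}[X_n^{*}]<\infty$; you still need to argue that the centres do not drift to $+\infty$. The clean way is to use the sub-Gaussian bound to compare mean and median, $|\mathbb{E}[X_n^{*}]-\mathrm{med}(X_n^{*})|\leq C\sigma_T$, and then note that the medians are increasing and bounded because $\mathbb{P}(X^{*}\geq \mathrm{med}(X_n^{*}))\geq \tfrac12$ together with $X^{*}<\infty$ a.s.\ forces $\sup_n \mathrm{med}(X_n^{*})<\infty$. After that, monotone convergence gives $\mathbb{E}[X_n^{*}]\uparrow \mathbb{E}[X^{*}]<\infty$, and Fatou transfers the tail bound as you say.
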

\begin{thm}[Slepian's Lemma, {\cite[Theorem~3.11]{MR2814399}}]\label{thm:slepian}
	Let $T= \{1,\dotsc,n \}$ and $X,Y$ be two centred Gaussian vectors. Assume that we have two subsets $A,B \subset T \times T$ satisfying
	\begin{align}
		& \mathbbm{E}[X_i X_j] \leq \mathbbm{E}[Y_i Y_j], \, \quad(i,j) \in A \\
		& \mathbbm{E}[X_i X_j] \geq \mathbbm{E}[Y_i Y_j], \, \quad(i,j) \in B \\
		& \mathbbm{E}[X_i X_j] = \mathbbm{E}[Y_i Y_j], \, \quad (i,j)\notin A \cup B.
	\end{align}
	Further, suppose that $f: \mathbbm{R}^n \rightarrow \mathbbm{R}$ is a smooth function with at most exponential growth at infinity of $f$ itself, as well as its first and second derivatives, and that
	\begin{align}
		& \partial_{ij} f \geq 0, \, \quad(i,j) \in A \\
		& \partial_{ij} f \leq 0, \, \quad(i,j) \in B.
	\end{align}
	Then, 
	\begin{align}
		\mathbbm{E}[f(X)]\leq \mathbbm{E}[f(Y)].
	\end{align}
\end{thm}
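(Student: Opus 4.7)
The plan is to prove Slepian's Lemma by a Gaussian interpolation argument, which is the standard approach due to Kahane. Without loss of generality I may assume that $X$ and $Y$ are defined on a common probability space and are independent; this does not affect the marginal laws appearing in the conclusion. I would then introduce the one-parameter family
\begin{align}
Z(t) \coloneqq \sqrt{1-t}\, X + \sqrt{t}\, Y, \qquad t \in [0,1],
\end{align}
so that $Z(0) \overset{d}{=} X$ and $Z(1) \overset{d}{=} Y$, and set $g(t) \coloneqq \mathbb{E}[f(Z(t))]$. The goal reduces to showing that $g$ is non-decreasing on $[0,1]$.

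The second step is to differentiate $g$ in $t$; the growth hypothesis on $f$ and its first two derivatives justifies passing $\partial_t$ under the expectation by dominated convergence. This yields, for $t \in (0,1)$,
\begin{align}
g'(t) = \sum_{i=1}^{n} \mathbb{E}\!\left[ \partial_i f(Z(t)) \cdot \tfrac{1}{2}\!\left( \tfrac{Y_i}{\sqrt{t}} - \tfrac{X_i}{\sqrt{1-t}} \right) \right].
\end{align}
I would then apply the Gaussian integration by parts formula (Stein's lemma): for a centred jointly Gaussian vector $(W_1,\dotsc,W_n)$ and a smooth function $h$ with at most exponential growth, $\mathbb{E}[W_i h(W)] = \sum_j \mathbb{E}[W_i W_j]\, \mathbb{E}[\partial_j h(W)]$. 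Applied to each of the terms above, using the independence of $X$ and $Y$ so that $\mathrm{Cov}(Y_i, Z_j(t)) = \sqrt{t}\,\mathbb{E}[Y_i Y_j]$ and $\mathrm{Cov}(X_i, Z_j(t)) = \sqrt{1-t}\,\mathbb{E}[X_i X_j]$, this gives
\begin{align}
g'(t) = \tfrac{1}{2} \sum_{i,j=1}^{n} \bigl( \mathbb{E}[Y_i Y_j] - \mathbb{E}[X_i X_j] \bigr)\, \mathbb{E}\!\left[ \partial_{ij} f(Z(t)) \right].
\end{align}

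The third step is to read off the sign. By the hypotheses on $A$ and $B$, every $(i,j)$ pair contributes a non-negative summand: on $A$ the covariance difference is non-negative and $\partial_{ij}f \geq 0$; on $B$ both factors flip sign; off $A \cup B$ the covariance difference vanishes. Hence $g'(t) \geq 0$ for all $t \in (0,1)$, so $g(1) \geq g(0)$, which is the claimed inequality.

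The main obstacle is a technical one rather than conceptual: one must verify that the growth assumptions on $f$, $\nabla f$ and $\nabla^2 f$ are strong enough both to differentiate under the expectation and to justify Gaussian integration by parts on $Z(t)$ uniformly for $t$ in compact subintervals of $(0,1)$. A standard truncation-and-approximation argument (multiplying $f$ by a smooth compactly supported bump $\chi_R$, applying the identity for $\chi_R f$, and then sending $R \to \infty$ using the exponential-growth control against the Gaussian density of $Z(t)$) handles this cleanly, and the endpoint values $g(0), g(1)$ are recovered by continuity of $t \mapsto \mathbb{E}[f(Z(t))]$.
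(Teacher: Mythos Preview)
Your proof is correct and is precisely the standard Gaussian interpolation argument (Kahane's method) that underlies this result. The paper itself does not give a proof of this theorem: it is stated in the appendix with a citation to \cite[Theorem~3.11]{MR2814399}, and the argument you have written is essentially the one found in that reference.
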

We use Slepian's Lemma in a particular setting, i.e. we assume that $\mathbbm{E}\left[X_i^2\right]=\mathbbm{E}\left[Y_i^2\right]$ and $\mathbbm{E}\left[X_iX_j\right]\geq \mathbbm{E}\left[Y_iY_j\right]$ for all $i,j \in T.$ We then have for any $x \in \mathbbm{R}$,
	\begin{align}
		\mathbbm{P}\left(\max_{i \in T}X_i >x\right)\leq \mathbbm{P}\left(\max_{i \in T}Y_i>x\right).
	\end{align}
In particular, $\mathbbm{E}\left[\max_{i \in T}X_i\right]\leq \mathbbm{E}\left[\max_{i\in T}Y_i\right].$ 
If we only want to compare the expectation of maxima we do not need the equality of variances. This is a result due to Sudakov and Fernique.
\begin{thm}[Sudakov-Fernique, {\cite{MR0413238}}]\label{thm:sudakov}
	Let $I$ be an arbitrary set of finite size $n,$ $\{X_i\}_{i \in I}, \{Y_i\}_{i \in I}$ be two centred Gaussian vectors. Define $\gamma_{ij}^{X} \coloneqq \mathbbm{E}[(X_i-X_j)^2]$, $\gamma_{ij}^{Y} \coloneqq \mathbbm{E}[(Y_i-Y_j)^2]$. Let $\gamma \coloneqq \max_{i,j} |\gamma_{ij}^X - \gamma_{ij}^Y|.$ Then,
	\begin{align}
		 \left|\mathbbm{E}[X^*] - \mathbbm{E}[Y^*]\right| \leq \sqrt{\gamma \log(n)}.
	\end{align}
	If $\gamma_{ij}^X \leq \gamma_{ij}^Y$ for any $i,j\in I$, then
	\begin{align}
		\mathbbm{E}[X^*]\leq \mathbbm{E}[Y^*].
	\end{align}
\end{thm}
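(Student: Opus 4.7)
The plan is to interpolate between $X$ and $Y$ and track how the expectation of a smoothed maximum varies; both conclusions will then drop out of a single Gaussian integration-by-parts identity. First I would replace the maximum by the log-sum-exp approximation
\begin{align*}
f_\beta(z)\coloneqq \beta^{-1}\log\sum_{i=1}^{n}e^{\beta z_i},\qquad \beta>0,
\end{align*}
which is $C^{\infty}$ and sandwiches the maximum as $\max_{i}z_i\leq f_\beta(z)\leq \max_{i}z_i+\beta^{-1}\log n$; in particular its first and second partials are bounded in sup-norm by $1$ and $\beta$ respectively. Taking $X$ and $Y$ realised independently on a common probability space, I would then set
\begin{align*}
Z(t)\coloneqq \sqrt{1-t}\,X+\sqrt{t}\,Y,\qquad \varphi(t)\coloneqq \mathbbm{E}\bigl[f_\beta(Z(t))\bigr],\qquad t\in[0,1],
\end{align*}
so that $\varphi(0)=\mathbbm{E}[f_\beta(X)]$ and $\varphi(1)=\mathbbm{E}[f_\beta(Y)]$.

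The heart of the argument is to compute $\varphi'(t)$ via Gaussian integration by parts, i.e.\ $\mathbbm{E}[G\,h(W)]=\sum_k\mathrm{Cov}(G,W_k)\,\mathbbm{E}[\partial_k h(W)]$ for jointly Gaussian $(G,W)$. Differentiating $Z_i(t)$ in $t$, the independence of $X$ and $Y$ makes the $\sqrt{1-t}$ and $\sqrt{t}$ factors cancel, leaving
\begin{align*}
\varphi'(t)=\tfrac{1}{2}\sum_{i,k}\bigl(\mathbbm{E}[Y_iY_k]-\mathbbm{E}[X_iX_k]\bigr)\,\mathbbm{E}\bigl[\partial_{ik}f_\beta(Z(t))\bigr].
\end{align*}
A short calculation with $\eta_i(z)\coloneqq e^{\beta z_i}/\sum_j e^{\beta z_j}$ yields $\partial_{ik}f_\beta(z)=\beta(\delta_{ik}\eta_i(z)-\eta_i(z)\eta_k(z))$, and symmetrising in $i\leftrightarrow k$ while using $\sum_k\eta_k=1$ recasts the derivative as
\begin{align*}
\varphi'(t)=\tfrac{\beta}{4}\,\mathbbm{E}\!\left[\sum_{i,k}\bigl(\gamma^{Y}_{ik}-\gamma^{X}_{ik}\bigr)\,\eta_i(Z(t))\,\eta_k(Z(t))\right].
\end{align*}

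Both assertions now follow from this identity. If $\gamma^{X}_{ij}\leq\gamma^{Y}_{ij}$ for all $i,j$, the integrand is non-negative, $\varphi$ is non-decreasing, and letting $\beta\to\infty$ in the sandwich gives $\mathbbm{E}[X^{*}]\leq\mathbbm{E}[Y^{*}]$. For the quantitative bound I would use $|\gamma^{Y}_{ik}-\gamma^{X}_{ik}|\leq\gamma$ together with $\sum_{i,k}\eta_i\eta_k=1$ to obtain $|\varphi'(t)|\leq\beta\gamma/4$; integrating over $[0,1]$ and combining with the two-sided sandwich (which controls $0\leq\mathbbm{E}[f_\beta(\cdot)]-\mathbbm{E}[(\cdot)^{*}]\leq \beta^{-1}\log n$ on either side, with matching signs) yields
\begin{align*}
\bigl|\mathbbm{E}[X^{*}]-\mathbbm{E}[Y^{*}]\bigr|\leq \tfrac{\beta\gamma}{4}+\tfrac{\log n}{\beta},
\end{align*}
and optimising with $\beta=2\sqrt{(\log n)/\gamma}$ returns the claimed bound $\sqrt{\gamma\log n}$.

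The only genuinely delicate step is the rigorous Gaussian integration by parts: one has to set up the joint law of $(X,Y,Z(t))$ and invoke a version of the identity valid for functions whose first two derivatives grow at most polynomially, both of which are routine given the uniform bounds on $\partial_i f_\beta$ and $\partial_{ik} f_\beta$. Everything after that is an $n\times n$ symmetrisation and a one-parameter scalar optimisation, so I do not anticipate any substantive obstacles beyond bookkeeping.
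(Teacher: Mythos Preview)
Your argument is correct and is precisely the standard interpolation proof of Sudakov--Fernique (Kahane's smart-path method with the log-sum-exp smoothing). The paper, however, does not prove this theorem at all: it is stated in the appendix purely as a quoted tool with a citation to Fernique \cite{MR0413238}, so there is no in-paper proof to compare against. Your write-up would thus serve as a self-contained replacement for that citation.
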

In particular, if $\{X_i\}_{i \in I} \text{ and } \{Y_i\}_{i \in I}$ are independent centred Gaussian fields without any additional assumptions on their correlations, one deduces
\begin{equation}
	\mathbbm{E}\left[ \max_{i \in I} (X_i + Y_i)\right] \geq \mathbbm{E}\left[\max_{i \in I} X_i \right].
\end{equation}
\section{Covariance estimates}\label{appendix:B}
For particles $v,w \in V_N$, let
\begin{align}
	b_N(v,w)\coloneqq \max\{\lambda \in [0,1]: [v]^N_{\lambda}\cap [w]^N_{\lambda}\neq \emptyset \}
\end{align}
denote the branching scale. The key point is that beyond $b_N(v,w),$ increments are independent, that is for $1\geq \lambda^\prime > \lambda> b_N(v,w)$, $\phi^N_v(\lambda^{\prime} )-\phi^N_w(\lambda)$ is independent of $\phi^N_w(\lambda^\prime)-\phi^N_w(\lambda)$, whereas increments before the branching scale are correlated.
Further, for some $B\subset V_N$, we set
\begin{align}
	\phi^N_v(B)\coloneqq \mathbb{E}\left[\phi^N_v | \sigma\left(\phi^N_w:w\in B^c\right)\right].
\end{align}
Recall that for $\lambda\in [0,1]$, we also write $\phi^N_v(\lambda)=\phi^N_v([v]_\lambda^N)$.
\begin{lemma_appendix}\label{lemma:cov_estimate_sd_dgff}
	Let $\delta\in (0,1/2)$ and $N\in \mathbb{N}$ such that $\min_{1\leq i \leq M}2^{\frac{2}{\Delta \lambda_{i}}}\leq N$, as well as $N^{\lambda_1}>\delta^{-1}$. Let $v,w\in V_N^\delta$ and assume that the branching scale $b_N(v,w)$ coincides with a scale parameter, i.e. $b_N(v,w)=\lambda_{i}$ for some $i\in \mathbb{N}$. Then for any $0\leq i,j\leq M$ with $\lambda_{i},\lambda_{j}\leq b_N(v,w)$, we have
	\begin{align}
	\mathbb{E}\left[\Delta\phi^N_v(\lambda_i)\Delta\phi^N_w(\lambda_j)\right]=\Delta \lambda_i \log(N) \mathbbm{1}_{i=j}+ O\left(1\right).
	\end{align}
	\begin{proof}
		For $v=w$ the statement is contained in \cite[Lemma A.2]{MR3541850}. Let us assume $v\neq w$ throughout the proof .
		We start with the case $i=j$.
		More, we assume $[v]_{\lambda_i}\cap [w]_{\lambda_i}\neq \emptyset$, i.e. the boxes should intersect at least at the boundary. If this is not the case, we can subdivide the scales further and use that beyond $b_N(v,w)$ the respective increments are independent. This implies that $\|v-w\|_2\leq \sqrt{2}N^{1-\lambda_i}$. We now pick a box $B$ of side length $2N^{1-\lambda_i}$, centred at the middle of the line connecting the vertices $v$ and $w$. This ensures the inclusion
		\begin{align}\label{equation:inclusion_B_1}
		\sigma\left(\phi^N_u:u\in B^c\right)\subset\sigma\left(\phi^N_u:u\in[v]^c_{\lambda_i}\right),\,\sigma\left(\phi^N_u:u\in [w]^c_{\lambda_i}\right).
		\end{align}
		Next we pick a box $\tilde{B}$ of side length $\frac{1}{2}N^{1-\lambda_{i-1}}$ with the same centre as $B$. For $N$ as in the assumption, this implies in particular that $\sigma\left(\phi^N_u:u\in\tilde{B}^c\right)\subset\sigma\left(\phi^N_u:u\in B^c\right)$, as well as 
		\begin{align}\label{equation:inclusion_B_2}
		\sigma\left(\phi^N_u:u\in[v]^c_{\lambda_{i-1}}\right),\,\sigma\left(\phi^N_u:u\in [w]^c_{\lambda_{i-1}}\right)\subset \sigma\left(\phi^N_u:u\in\tilde{B}^c\right).
		\end{align} 
		We write $\Delta \phi^N_v(B)=\phi^N_v(B)-\phi^N_v(\tilde{B})$ and compute,
		\begin{align}
		\mathbb{E}\left[\Delta \phi^N_v(\lambda_i)\Delta \phi^N_w(\lambda_i)\right]&= \mathbb{E}\left[\left(\phi^N_v(\lambda_i)-\phi^N_v(B)+\nabla\phi^N_v(B)+\phi^N_v(\tilde{B})-\phi^N_v(\lambda_{i-1})\right) \right.\nonumber\\ & \left. \quad \times \left(\phi^N_w(\lambda_i)-\phi^N_w(B)+\nabla\phi^N_w(B)+\phi^N_w(\tilde{B})-\phi^N_w(\lambda_{i-1})\right)\right]\nonumber\\
		& =\mathbb{E}\left[\Delta\phi^N_v(B)\Delta\phi^N_w(B)\right] \label{eq:pf_cov_est_line1}\\
		&\quad+ \mathbb{E}\left[\Delta \phi^N_v(B)\left(\phi^N_w(\lambda_i)-\phi^N_w(B)+\phi^N_w(\tilde{B})-\phi^N_w(\lambda_{i-1})\right)\right]\label{eq:pf_cov_est_line2}\\
		&\quad+\mathbb{E}\left[\left(\phi^N_v(\lambda_i)-\phi^N_v(B)\right)\left(\phi^N_w(\lambda_i)-\phi^N_w(B)+\phi^N_w(\tilde{B})-\phi^N_w(\lambda_{i-1})\right)\right]\label{eq:pf_cov_est_line3}\\
		&\quad-\mathbb{E}\left[\left(\phi^N_v(\lambda_{i-1})-\phi^N_v(\tilde{B})\right)\left(\phi^N_w(\lambda_i)-\phi^N_w(B)+\phi^N_w(\tilde{B})-\phi^N_w(\lambda_{i-1})\right)\right]\label{eq:pf_cov_est_line4}.			
		\end{align}
		Using the conditional covariance identity
		\begin{align}
		\mathbb{E}\left[\mathbb{E}\left[X|\mathcal{A}\right]\mathbb{E}\left[Y|\mathcal{A}\right]\right]=\mathbb{E}\left[XY\right]-\mathbb{E}\left[\left(X-\mathbb{E}\left[X|\mathcal{A}\right]\right)\left(Y-\mathbb{E}\left[Y|\mathcal{A}\right]\right)\right],
		\end{align}
		with $X=\phi^N_v(1)-\phi^N_v(\tilde{B}), \,Y=\phi^N_w(1)-\phi^N_w(\tilde{B})$ and $\mathcal{A}=\sigma\left(\phi^N_u:u\notin B^o\right)$, along with noting that by the Gibbs-Markov property of the DGFF $\phi^N_v(1)-\phi^N_v(\tilde{B})\overset{d}{=}\phi^{\tilde{B}}_v$, we can write the first term \eqref{eq:pf_cov_est_line1} as
		\begin{align}
		\mathbb{E}\left[\phi^B_v \phi^B_w\right]-\mathbb{E}\left[\phi^{\tilde{B}}_v\phi^{\tilde{B}}_w\right]&= \log\left(N^{1-\lambda_i+\log(2)/\log(N)}\right)- \log(\|v-w\|\vee 1)- \log\left(N^{1-\lambda_i-\log(2)/\log(N)}\right)\nonumber\\
		&\quad +\log(\|v-w\|\vee 1) +O(1)= \Delta\lambda_i \log(N) +O(1).
		\end{align}
		For the remaining terms we need to show that they are at most of constant order. As the last two terms  \eqref{eq:pf_cov_est_line3} and \eqref{eq:pf_cov_est_line4} can be estimated the same way, we only deal with \eqref{eq:pf_cov_est_line3}. Using Cauchy-Schwarz,
		\begin{align}
		&\mathbb{E}\left[\left(\phi^N_v(\lambda_i)-\phi^N_v(B)\right)\left(\phi^N_w(\lambda_i)-\phi^N_w(B)-\phi^N_w(\lambda_{i-1})+\phi^N_w(\tilde{B})\right)\right]\nonumber\\
		\leq& \mathbb{E}\left[\left(\phi^N_v(\lambda_i)-\phi^N_v(B)\right)^2\right]^{1/2}\left(\mathbb{E}\left[\left(\phi^N_w(\lambda_i)-\phi^N_w(B)\right)^2\right]^{1/2}+\mathbb{E}\left[\left(\phi^N_w(\tilde{B})-\phi^N_w(\lambda_{i-1})\right)^2\right]^{1/2}\right)\nonumber\\=&(\log(2)+c_1)(\log(2)+c_2+\log(2)+c_3)=O(1).
		\end{align}
		To estimate \eqref{eq:pf_cov_est_line2} we make exhaustive use of our choice of boxes and use the relations \eqref{equation:inclusion_B_1} and \eqref{equation:inclusion_B_2} along with the tower property for conditional expectations and the law of total expectation, i.e. we first observe that both $\mathbb{E}\left[\phi^N_v(B)\phi^N_w(\lambda_i)\right]=\mathbb{E}\left[\phi^N_v(B)\phi^N_w(B)\right]$ and $\mathbb{E}\left[\phi^N_v(\tilde{B})\phi^N_w(\lambda_i) \right]=\mathbb{E}\left[\phi^N_v(\tilde{B})\phi^N_w(\tilde{B}) \right]$ hold. Using this, we reformulate \eqref{eq:pf_cov_est_line2}, i.e.
		\begin{align}
		&\mathbb{E}\left[\Delta \phi^N_v(B)\left(\phi^N_w(\lambda_i)-\phi^N_w(B)+\phi^N_w(\tilde{B})-\phi^N_w(\lambda_{i-1})\right)\right]\nonumber\\ 
		=&\mathbb{E}\left[\phi^N_v(B)\left(\phi^N_w(\tilde{B})-\phi^N_w(\lambda_{i-1})\right)\right]-\mathbb{E}\left[\phi^N_v(\tilde{B})\left(\phi^N_w(\tilde{B})-\phi^N_w(\lambda_{i-1})\right)\right]\nonumber\\
		=&\mathbb{E}\left[\phi^N_v\left(\phi^N_w(\tilde{B})-\phi^N_w(\lambda_{i-1})\right)\right]-\mathbb{E}\left[\phi^N_v\left(\phi^N_w(\tilde{B})-\phi^N_w(\lambda_{i-1})\right)\right]=0.
		\end{align}
		For the remaining case $i\neq j$, we note that for $|i-j|\geq 2$ increments are independent as the difference of the boxes do not intersect for any $v,w\in V_N$, as we assume $N$ to be sufficiently large. The only remaining case is $j=i-1$. Note that in this case, the increment $\Delta\phi^N_v(\lambda_{i})$ is independent of the increment $\phi^N_w(\lambda_{i-1}-\frac{\log(4)}{\log(N)})-\phi^N_w(\lambda_{i-2})$, as the annuli of the corresponding boxes do not intersect. This gives,
		\begin{align}\label{equation:7.28}
		\mathbb{E}\left[\Delta \phi^N_v(\lambda_{i})\Delta\phi^N_w(\lambda_{i-1})\right]=&\mathbb{E}\left[\Delta\phi^N_v(\lambda_{i})\left(\phi^N_w(\lambda_{i-1})-\phi^N_w\left(\lambda_{i-1}-\frac{\log(4)}{\log(N)}\right)+\phi^N_w\left(\lambda_{i-1}-\frac{\log(4)}{\log(N)}\right)-\phi^N_w(\lambda_{i-2})\right)\right]\nonumber\\
		=&\mathbb{E}\left[\Delta \phi^N_v(\lambda_{i})\left(\phi^N_w(\lambda_{i-1})-\phi^N_w\left(\lambda_{i-1}-\frac{\log(4)}{\log(N)}\right)\right)\right]
		\nonumber\\
		=&\mathbb{E}\left[\left(\phi^N_v(\lambda_{i})-\phi^N_v([w]_{\lambda_{i}})+\phi^N_v([w]_{\lambda_{i}})-\phi^N_v([w]_{\lambda_{i-1}})+\phi^N_v([w]_{\lambda_{i-1}})-\phi^N_v(\lambda_{i-1})\right)\right.\nonumber\\
		&\times\left. \left(\phi^N_w(\lambda_{i-1})-\phi^N_w\left(\lambda_{i-1}-\frac{\log(4)}{\log(N)}\right)\right)\right].
		\end{align}
		Provided $N$ is large, we have $[v]_{\lambda_{i}}^c\cup [w]_{\lambda_{i}}^c \supset[w]_{\lambda_{i-1}}^c \supset [w]_{\lambda_{i-1}-\frac{\log(4)}{\log(N)}}^c$ and so by the tower property and the law of total expectation, we deduce
		\begin{align}
		\mathbb{E}&\left[\left(\phi^N_v(\lambda_{i})-\phi^N_v([w]_{\lambda_{i}})\right)\left(\phi^N_w(\lambda_{i-1})-\phi^N_w\left(\lambda_{i-1}-\frac{\log(4)}{\log(N)}\right)\right)\right]\nonumber\\
		&=\mathbb{E}\left[\mathbb{E}\left[\phi^N_v\left(\phi^N_w(\lambda_{i-1})-\phi^N_w\left(\lambda_{i-1}-\frac{\log(4)}{\log(N)}\right)\right)\bigg|\sigma\left(\phi^N_u: u\in [v]_{\lambda_{i}}^c\right)\right]\right]\nonumber\\
		&\quad-\mathbb{E}\left[\mathbb{E}\left[\phi^N_v\left(\phi^N_w(\lambda_{i-1})-\phi^N_w\left(\lambda_{i-1}-\frac{\log(4)}{\log(N)}\right)\right)\bigg|\sigma\left(\phi^N_u: u\in [w]_{\lambda_{i}}^c\right)\right] \right]=0.			
		\end{align}
		As the annuli $[w]_{\lambda_{i-1}}\backslash [w]_{\lambda_{i}}$ and $[w]_{\lambda_{i-1}-\frac{\log(4)}{\log(N)}}\backslash [w]_{\lambda_{i-1}}$ do not intersect, we have independence of the corresponding increments, i.e.
		\begin{align}
		\mathbb{E}\left[\left(\phi^N_v([w]_{\lambda_{i}})-\phi^N_v([w]_{\lambda_{i-1}})\right)\left(\phi^N_w(\lambda_{i-1})-\phi^N_w\left(\lambda_{i-1}-\frac{\log(4)}{\log(N)}\right)\right)\right]=0.
		\end{align}
		The remaining term in \eqref{equation:7.28} can be bounded in a first step by the Cauchy-Schwarz inequality,
		\begin{align}
		&\mathbb{E}\left[\left(\phi^N_v([w]_{\lambda_{i-1}})-\phi^N_v(\lambda_{i-1})\right)\left(\phi^N_w(\lambda_{i-1})-\phi^N_w\left(\lambda_{i-1}-\log(4)/\log(N)\right)\right)\right]\nonumber\\&\quad
		\leq c \sqrt{\log(4)} \mathbb{E}\left[\left(\phi^N_v([w]_{\lambda_{i-1}})-\phi^N_v(\lambda_{i-1})\right)^2\right]^{1/2}.
		\end{align}
		In order to bound the expectation on the right hand side, we consider a box $B$ centred at the middle of the line connecting $v$ and $w$ of side length $N^{1-\lambda_{i-1}}-\sqrt{2}N^{1-\lambda_{i}}$. The assumption $\|v-w\|_\infty\leq \sqrt{2}N^{1-\lambda_{i}}$ ensures the inclusion
		$B\subset [v]_{\lambda_{i-1}}\cap [w]_{\lambda_{i-1}}$. This allows us to compute in a similar fashion as in the first case \eqref{eq:pf_cov_est_line1}, i.e.
		\begin{align}
		\mathbb{E}\left[\left(\phi^N_v([w]_{\lambda_{i-1}})-\phi^N_v(\lambda_{i-1})\right)^2\right]&= 	\mathbb{E}\left[\left(\phi^N_v([w]_{\lambda_{i-1}})-\phi^N_v(B)+\phi^N_v(B)-\phi^N_v(\lambda_{i-1})\right)^2\right]\nonumber\\
		&\leq 4\max\left(\mathbb{E}\left[\left(\phi^N_v([w]_{\lambda_{i-1}})-\phi^N_v(B)\right)^2\right],\mathbb{E}\left[\left(\phi^N_v(B)-\phi^N_v(\lambda_{i-1})\right)^2\right] \right)\nonumber\\
		&\leq 4(c+\log(N^{1-\lambda_{i-1}})-\log(N^{1-\lambda_{i-1}}(1-\sqrt{2}N^{-\Delta \lambda_{i}})))\leq C.
		\end{align}
		The constants $c,C>0$ can be chosen uniformly in $N$, however depending on the scale parameters.
		Altogether, we obtain
		\begin{align}
		\mathbb{E}\left[\Delta \phi^N_v(\lambda_{i})\Delta \phi^N_w(\lambda_{j})\right]\leq C,
		\end{align}
		for some constant $C>0$ that is uniform in $N$, which finishes the proof.
	\end{proof}
\end{lemma_appendix}
\begin{proof}[Proof of \autoref{lemma:cov_comp}]
		For a proof of the statements $i.$ and $iii.$, we refer to \cite[Lemma 2.2]{MR2846636}. We have that $\log_{+}(d^N_\infty(v,w))\leq \log_{+}(d^N(v,w))\leq \log_{+}(d_\infty^N(v,w))+1$.
		We begin with the proof of the second statement. Note that if $1 \leq k< \log_{+}(d_{\infty}^N(v,w)+1),$ there are no boxes of size $2^k$ that cover both $v$ and $w$. Thus, if $B, \tilde{B}$ are boxes such that one covers $v$ but not $w$ and the other $w$ but not $v$, the associated random variables $b_{k,B}, b_{k,\tilde{B}}$ are independent. And so, only random variables $b_{k,B}$ associated to boxes of size $2^k$ with $k> \ceil*{\log_2(d_{\infty}^N(v,w)+1)}$ contribute to the covariance. For $v=(v_1,v_2), \, w=(w_1,w_2)$ and $i=1,2$, we write $
		r_i (v,w)= \min(|v_i - w_i|,|v_i-w_i-N|,|v_i-w_i+N|)$. Using the fact that the number of common boxes for $v,w\in V_N$ is given by $[2^k-r_1(v,w)][2^k-r_2(v,w)]$,
		\begin{align}
			\mathbb{E}\left[S^N_v S^N_w\right]&=\sum_{k= \ceil*{\log_{+}(d_{\infty}^N(v,w))}}^{n} 2^{-2k} \sigma^2\left(\frac{n-k}{n}\right)[2^k-r_1(v,w)][2^k-r_2(v,w)] \\ 
			&=\sum_{k= \ceil*{\log_{+}(d_{\infty}^N(v,w))}}^{n}\left[\left(1- \frac{r_1(v,w)}{2^k} - \frac{r_2 (v,w)}{2^k} + \frac{r_1(v,w) r_2(v,w)}{2^{2k}}\right)\left(\sum_{i=1}^{M}\mathbbm{1}_{n-k \in (\lambda_{i-1} n,\lambda_i n ] }\sigma_i^2\right)\right].\nonumber
		\end{align}
		We note that since $a+b-ab\geq 0$ for $0 \leq a,b \leq 1,$ we get
		\begin{align}
			\mathbb{E}\left[S^N_v S^N_w\right]  &\leq
			n \sum_{i=1}^{M} \sigma_i^2 \Delta \lambda_i - \sum_{i=1}^{M} \sigma_i^2 [n\Delta \lambda_i \mathbbm{1}_{n-\ceil*{\log_{+}(d_{\infty}^N (v,w))}\leq\lambda_i n} \nonumber\\&\quad+ \left[\lambda_i n -\left(n-\ceil*{\log_{+}(d_\infty^N(v,w))}\right)\right] \mathbbm{1}_{\lambda_{i-1}n< n -\ceil*{\log_{+}(d_\infty^N(v,w))}<\lambda_i n}] \nonumber \\&= 2\sum_{i=1}^{M}\sigma_i^2+
			\sum_{i=1}^{M} \sigma_i^2[n\Delta \lambda_i \mathbbm{1}_{ n-\ceil*{\log_{+}(d^N(v,w))} \geq \lambda_i n} \nonumber\\ &\quad +\left((1-\lambda_{i-1})n\-\ceil*{\log_{+}(d^N(v,w))}\right) \mathbbm{1}_{\lambda_{i-1}n<n-\ceil*{\log_{+}(d^N (v,w))}< \lambda_i n}]\nonumber\\
			&=2\mathcal{I}_{\sigma^2}(1)+n \mathcal{I}_{\sigma^2}\left(\frac{n-\ceil*{\log_{+}\left(d^N(v,w)\right)}}{n}\right).
		\end{align}
		On the other hand, since $a+b-ab\leq a+b$ for $a,b\geq 0$, we get
		\begin{align}
			\mathbb{E}\left[S^N_v S^N_w\right] & \geq \sum_{k=\ceil*{\log_{+}(d_{\infty}^N(v,w))}}^{n} \sigma^2\left(\frac{n-k}{n}\right) - \max_{1\leq i \leq M}\sigma^2\left(\frac{i}{n}\right) 2^{-k+1} d_\infty^N(v,w) \nonumber \\ & \geq  \sum_{i=1}^{M} \sigma_i^2 \left[n\Delta \lambda_i \mathbbm{1}_{ n-\ceil*{\log_{+}(d^N(v,w))} \geq \lambda_i n} + \left((1-\lambda_{i-1})n\right. \right.\nonumber\\ &\quad \left. \left. - \ceil*{\log_{+}(d^N(v,w))}\right) \mathbbm{1}_{\lambda_{i-1}n<n-\ceil*{\log_{+}(d^N(v,w))}< \lambda_i n} \right] - C \nonumber\\
			&=n \mathcal{I}_{\sigma^2}\left(\frac{n-\ceil*{\log_{+}\left(d^N(v,w)\right)}}{n} \right)-C,
		\end{align}
		where in the second step we did a rescaling from $[0,n]$ onto the unit interval $[0,1]$ and where $C>0$ is a constant independent of $N$ with $C>2\max_{1\leq i \leq M} \sigma^2(i/M)$ that deals with the second part of the sum.\\
		To prove the last statement $iv.$, we note that beyond the branching scale, $N$ being sufficiently large (see assumptions of \autoref{lemma:cov_estimate_sd_dgff}) and by the Gibbs-Markov property, increments are independent as the annuli of the corresponding boxes do not intersect (see for instance \cite[Section 2]{MR3541850}). Moreover, by a refinement of the scale parameters and possibly allowing for an additional uniformly bounded constant, we can assume that the branching scale coincides with a scale parameter. With this we can apply \autoref{lemma:cov_estimate_sd_dgff} and obtain the result, i.e.
		\begin{align}
			\mathbb{E}\left[\psi^{4N}_x \psi^{4N}_y\right]  &=  \mathbbm{E}\left[\sum_{i=1}^{M}\sum_{j=1}^{M} \sigma_i \sigma_j \Delta \phi_x^{4N}(\lambda_i) \Delta \phi_y^{4N}(\lambda_j)\right] = \sum_{i=1}^{M} \sigma_i^2 \mathbbm{E}\left[(\Delta \phi_x^{4N} (\lambda_i))^2 \mathbbm{1}_{n-\ceil*{\log_{+}(\|v-w\|_2)} \geq \lambda_i} \right. \nonumber \\&\quad \left.+\left(\phi_x^{4N}\left(\frac{n-\ceil*{\log_{+}(\|x-y\|_2)}}{n}\right)-\phi_x^{4N}(\lambda_{i-1})\right) \mathbbm{1}_{\lambda_{i-1}n<n-\ceil*{\log_{+}(\|x-y\|_2)}< \lambda_i n}\right] +O(1)\nonumber \\
			&=\log(2)\sum_{i=1}^{M} \sigma_i^2\left[n \Delta \lambda_{i} \mathbbm{1}_{n-\ceil*{\log_{+}\|x-y\|_2}\geq \lambda_{i} n} +((1-\lambda_{i-1})n\right.\nonumber\\ &\quad \left. -\ceil*{\log_{+}\|x-y\|_2} \mathbbm{1}_{\lambda_{i-1}n<n-\ceil*{\log_{+}\|x-y\|_2}<\lambda_{i}n})\right] + O(1)
			\nonumber\\ &= \log(2)n \mathcal{I}_{\sigma^2}\left(\frac{n-\ceil*{\log_{+}\|x-y\|_2}}{n}\right) + O(1),
		\end{align}
		where $O(1)$ is uniform in N.
	\end{proof}

\bibliography{literature.bib}

\begin{thebibliography}{10}

\bibitem{MR1088478}
R.~J. Adler.
\newblock {\em An introduction to continuity, extrema, and related topics for
  general {G}aussian processes}.
\newblock Institute of Mathematical Statistics Lecture Notes---Monograph
  Series, 12. Institute of Mathematical Statistics, Hayward, CA, 1990.

\bibitem{MR3098680}
E.~A\"{\i}d\'{e}kon.
\newblock Convergence in law of the minimum of a branching random walk.
\newblock {\em Ann. Probab.}, 41(3A):1362--1426, 2013.

\bibitem{MR3101852}
E.~{A{\"i}d{\'e}kon}, J.~Berestycki, E.~Brunet, and Z.~Shi.
\newblock Branching {B}rownian motion seen from its tip.
\newblock {\em {Probab. Theory Related Fields}}, 157(1-2):405--451, 2013.

\bibitem{MR3594368}
L.-P. Arguin, D.~Belius, and P.~Bourgade.
\newblock Maximum of the characteristic polynomial of random unitary matrices.
\newblock {\em Comm. Math. Phys.}, 349(2):703--751, 2017.

\bibitem{MR3911893}
L.-P. Arguin, D.~Belius, P.~Bourgade, M.~Radziwi\l~\l, and K.~Soundararajan.
\newblock Maximum of the {R}iemann zeta function on a short interval of the
  critical line.
\newblock {\em Comm. Pure Appl. Math.}, 72(3):500--535, 2019.

\bibitem{MR3129797}
L.-P. Arguin, A.~Bovier, and N.~Kistler.
\newblock The extremal process of branching {B}rownian motion.
\newblock {\em {Probab. Theory Related Fields}}, 157(3-4):535--574, 2013.

\bibitem{MR3541850}
L.-P. Arguin and F.~Ouimet.
\newblock Extremes of the two-dimensional {G}aussian free field with
  scale-dependent variance.
\newblock {\em {ALEA Lat. Am. J. Probab. Math. Stat.}}, 13(2):779--808, 2016.

\bibitem{2017arXiv171209972B}
M.~Biskup.
\newblock Extrema of the two-dimensional discrete {G}aussian free field.
\newblock In {\em Random graphs, phase transitions, and the {G}aussian free
  field}, volume 304 of {\em Springer Proc. Math. Stat.}, pages 163--407.
  Springer, Cham, 2020.

\bibitem{MR3509015}
M.~Biskup and O.~Louidor.
\newblock {Extreme local extrema of two-dimensional discrete {G}aussian free
  field}.
\newblock {\em {Comm. Math. Phys.}}, 345(1):271--304, 2016.

\bibitem{2016arXiv160600510B}
M.~Biskup and O.~Louidor.
\newblock Full extremal process, cluster law and freezing for the
  two-dimensional discrete {G}aussian free field.
\newblock {\em Adv. Math.}, 330:589--687, 2018.

\bibitem{MR1880237}
E.~Bolthausen, J.-D. Deuschel, and G.~Giacomin.
\newblock Entropic repulsion and the maximum of the two-dimensional harmonic
  crystal.
\newblock {\em {Ann. Probab.}}, 29(4):1670--1692, 2001.

\bibitem{MR2772390}
E.~Bolthausen, J.-D. Deuschel, and O.~Zeitouni.
\newblock Recursions and tightness for the maximum of the discrete, two
  dimensional {G}aussian free field.
\newblock {\em {Electron. Commun. Probab.}}, 16:114--119, 2011.

\bibitem{MR3164771}
A.~Bovier and L.~Hartung.
\newblock The extremal process of two-speed branching {B}rownian motion.
\newblock {\em {Electron. J. Probab.}}, 19:no. 18, 28, 2014.

\bibitem{MR3351476}
A.~Bovier and L.~Hartung.
\newblock Variable speed branching {B}rownian motion 1. {E}xtremal processes in
  the weak correlation regime.
\newblock {\em {ALEA Lat. Am. J. Probab. Math. Stat.}}, 12(1):261--291, 2015.

\bibitem{2014arXiv1412.5975B}
A.~Bovier and L.~Hartung.
\newblock Extended convergence of the extremal process of branching {B}rownian
  motion.
\newblock {\em Ann. Appl. Probab.}, 27(3):1756--1777, 2017.

\bibitem{1808.05445}
A.~Bovier and L.~Hartung.
\newblock From 1 to 6: A finer analysis of perturbed branching brownian motion.
\newblock {\em Communications on Pure and Applied Mathematics},
  73(7):1490--1525, 2020.

\bibitem{MR2070334}
A.~Bovier and I.~Kurkova.
\newblock Derrida's generalised random energy models. {I}. {M}odels with
  finitely many hierarchies.
\newblock {\em {Ann. Inst. H. Poincar\'e Probab. Statist.}}, 40(4):439--480,
  2004.

\bibitem{MR2070335}
A.~Bovier and I.~Kurkova.
\newblock Derrida's generalized random energy models. {II}. {M}odels with
  continuous hierarchies.
\newblock {\em {Ann. Inst. H. Poincar\'e Probab. Statist.}}, 40(4):481--495,
  2004.

\bibitem{MR3433630}
M.~Bramson, J.~Ding, and O.~Zeitouni.
\newblock Convergence in law of the maximum of the two-dimensional discrete
  {G}aussian free field.
\newblock {\em {Comm. Pure Appl. Math.}}, 69(1):62--123, 2016.

\bibitem{MR2846636}
M.~Bramson and O.~Zeitouni.
\newblock Tightness of the recentered maximum of the two-dimensional discrete
  {G}aussian free field.
\newblock {\em {Comm. Pure Appl. Math.}}, 65(1):1--20, 2012.

\bibitem{MR0494541}
M.~D. Bramson.
\newblock Maximal displacement of branching {B}rownian motion.
\newblock {\em {Comm. Pure Appl. Math.}}, 31(5):531--581, 1978.

\bibitem{MR2243875}
O.~Daviaud.
\newblock Extremes of the discrete two-dimensional {G}aussian free field.
\newblock {\em Ann. Probab.}, 34(3):962--986, 2006.

\bibitem{MR971033}
B.~Derrida and H.~Spohn.
\newblock Polymers on disordered trees, spin glasses, and traveling waves.
\newblock volume~51, pages 817--840. 1988.
\newblock New directions in statistical mechanics (Santa Barbara, CA, 1987).

\bibitem{MR3101848}
J.~Ding.
\newblock Exponential and double exponential tails for maximum of
  two-dimensional discrete {G}aussian free field.
\newblock {\em {Probab. Theory Related Fields}}, 157(1-2):285--299, 2013.

\bibitem{2015arXiv150304588D}
J.~Ding, R.~Roy, and O.~Zeitouni.
\newblock Convergence of the centered maximum of log-correlated {G}aussian
  fields.
\newblock {\em Ann. Probab.}, 45(6A):3886--3928, 2017.

\bibitem{MR3262484}
J.~Ding and O.~Zeitouni.
\newblock Extreme values for two-dimensional discrete {G}aussian free field.
\newblock {\em {Ann. Probab.}}, 42(4):1480--1515, 2014.

\bibitem{MR2968674}
M.~Fang and O.~Zeitouni.
\newblock Branching random walks in time inhomogeneous environments.
\newblock {\em {Electron. J. Probab.}}, 17:no. 67, 18, 2012.

\bibitem{MR2981635}
M.~Fang and O.~Zeitouni.
\newblock Slowdown for time inhomogeneous branching {B}rownian motion.
\newblock {\em {J. Stat. Phys.}}, 149(1):1--9, 2012.

\bibitem{paper2}
M.~Fels and L.~Hartung.
\newblock Extremes of the 2d scale-inhomogeneous discrete gaussian free field:
  Convergence of the maximum in the regime of weak correlations.
\newblock {\em arXiv:1912.13184}, 2019.

\bibitem{paper3}
M.~Fels and L.~Hartung.
\newblock Extremes of the 2d scale-inhomogeneous discrete gaussian free field:
  Extremal process in the weakly correlated regime.
\newblock {\em arXiv:2002.00925}, 2020.

\bibitem{MR0413238}
X.~Fernique.
\newblock Regularit\'e des trajectoires des fonctions al\'eatoires gaussiennes.
\newblock In {\em \'{E}cole d'\'{E}t\'e de {P}robabilit\'es de {S}aint-{F}lour,
  {IV}-1974}, pages 1--96. Lecture Notes in Math., Vol. 480. Springer, Berlin,
  1975.

\bibitem{MR2150190}
P.~L. Ferrari and H.~Spohn.
\newblock Constrained {B}rownian motion: fluctuations away from circular and
  parabolic barriers.
\newblock {\em {Ann. Probab.}}, 33(4):1302--1325, 2005.

\bibitem{fortuin1971}
C.~M. Fortuin, P.~W. Kasteleyn, and J.~Ginibre.
\newblock Correlation inequalities on some partially ordered sets.
\newblock {\em Comm. Math. Phys.}, 22(2):89--103, 1971.

\bibitem{fyodorov1}
Y.~Fyodorov, G.~Hiary, and J.~Keating.
\newblock Freezing transition, characteristic polynomials of random matrices,
  and the riemann zeta function.
\newblock {\em Physical review letters}, 108:170601, 04 2012.

\bibitem{Derrida1}
E.~Gardner and B.~Derrida.
\newblock Magnetic properties and function $q(x)$ of the generalized random
  energy model.
\newblock {\em J. Phys. C}, 19:5783--5798, 1986.

\bibitem{Derrida2}
E.~Gardner and B.~Derrida.
\newblock Solution of the generalized random energy model.
\newblock {\em J. Phys. C}, 19:2253--2274, 1986.

\bibitem{MR3381547}
V.~Gayrard and N.~Kistler, editors.
\newblock {\em Correlated random systems: five different methods}, volume 2143
  of {\em Lecture Notes in Mathematics}.
\newblock Springer, Cham; Soci\'et\'e Math\'ematique de France, Paris, 2015.
\newblock Lecture notes from the 1st CIRM Jean-Morlet Chair held in Marseille,
  Spring 2013, CIRM Jean-Morlet Series.

\bibitem{gordon1941}
R.~D. Gordon.
\newblock Values of {M}ills' ratio of area to bounding ordinate and of the
  normal probability integral for large values of the argument.
\newblock {\em Ann. Math. Statist.}, 12(3):364--366, 09 1941.

\bibitem{MR1084815}
R.~A. Horn and C.~R. Johnson.
\newblock {\em Matrix analysis}.
\newblock Cambridge University Press, Cambridge, 1990.
\newblock Corrected reprint of the 1985 original.

\bibitem{MR893913}
S.~P. Lalley and T.~Sellke.
\newblock A conditional limit theorem for the frontier of a branching
  {B}rownian motion.
\newblock {\em {Ann. Probab.}}, 15(3):1052--1061, 1987.

\bibitem{MR2732325}
G.~F. Lawler.
\newblock {\em Random walk and the heat equation}, volume~55 of {\em Student
  Mathematical Library}.
\newblock American Mathematical Society, Providence, RI, 2010.

\bibitem{MR2985195}
G.~F. Lawler.
\newblock {\em Intersections of random walks}.
\newblock Modern Birkh\"auser Classics. Birkh\"auser/Springer, New York, 2013.
\newblock Reprint of the 1996 edition.

\bibitem{MR2677157}
G.~F. Lawler and V.~Limic.
\newblock {\em Random walk: a modern introduction}, volume 123 of {\em
  Cambridge Studies in Advanced Mathematics}.
\newblock Cambridge University Press, Cambridge, 2010.

\bibitem{MR2814399}
M.~Ledoux and M.~Talagrand.
\newblock {\em Probability in {B}anach spaces}.
\newblock Classics in Mathematics. Springer-Verlag, Berlin, 2011.
\newblock Isoperimetry and processes, Reprint of the 1991 edition.

\bibitem{MR3615081}
T.~Madaule.
\newblock Convergence in law for the branching random walk seen from its tip.
\newblock {\em J. Theoret. Probab.}, 30(1):27--63, 2017.

\bibitem{MR3531703}
P.~Maillard and O.~Zeitouni.
\newblock Slowdown in branching {B}rownian motion with inhomogeneous variance.
\newblock {\em {Ann. Inst. Henri Poincar\'e Probab. Stat.}}, 52(3):1144--1160,
  2016.

\bibitem{MR3361256}
B.~Mallein.
\newblock Maximal displacement in a branching random walk through interfaces.
\newblock {\em {Electron. J. Probab.}}, 20:no. 68, 40, 2015.

\bibitem{MR3373310}
B.~Mallein.
\newblock Maximal displacement of a branching random walk in time-inhomogeneous
  environment.
\newblock {\em {Stochastic Process. Appl.}}, 125(10):3958--4019, 2015.

\bibitem{MR3852245}
B.~Mallein.
\newblock Genealogy of the extremal process of the branching random walk.
\newblock {\em ALEA Lat. Am. J. Probab. Math. Stat.}, 15(2):1065--1087, 2018.

\bibitem{MR3731796}
F.~Ouimet.
\newblock Geometry of the {G}ibbs measure for the discrete 2{D} {G}aussian free
  field with scale-dependent variance.
\newblock {\em ALEA Lat. Am. J. Probab. Math. Stat.}, 14(2):851--902, 2017.

\bibitem{2015arXiv150908172O}
F.~Ouimet.
\newblock Maxima of branching random walks with piecewise constant variance.
\newblock {\em Braz. J. Probab. Stat.}, 32(4):679--706, 2018.

\bibitem{MR665603}
L.~D. Pitt.
\newblock Positively correlated normal variables are associated.
\newblock {\em {Ann. Probab.}}, 10(2):496--499, 1982.

\bibitem{1602.07323}
R.~Rhodes and V.~Vargas.
\newblock Gaussian multiplicative chaos and {L}iouville quantum gravity.
\newblock In {\em Stochastic processes and random matrices}, pages 548--577.
  Oxford Univ. Press, Oxford, 2017.

\bibitem{MR2322706}
S.~Sheffield.
\newblock {Gaussian free fields for mathematicians}.
\newblock {\em {Probab. Theory Related Fields}}, 139(3-4):521--541, 2007.

\bibitem{MR1402910}
R.~K. Sundaram.
\newblock {\em A first course in optimization theory}.
\newblock Cambridge University Press, Cambridge, 1996.

\bibitem{MR1787146}
D.~Werner.
\newblock {\em Funktionalanalysis}.
\newblock Springer-Verlag, Berlin, extended edition, 2000.

\bibitem{MR3526836}
O.~Zeitouni.
\newblock Branching random walks and {G}aussian fields.
\newblock In {\em Probability and statistical physics in {S}t. {P}etersburg},
  volume~91 of {\em Proc. Sympos. Pure Math.}, pages 437--471. Amer. Math.
  Soc., Providence, RI, 2016.

\end{thebibliography}
\bibliographystyle{abbrv}
\end{document}